\newtheorem{mainthm}{Theorem}
\newtheorem{thm}{Theorem}[section]
\newtheorem{cor}[thm]{Corollary}
\newtheorem{lem}[thm]{Lemma}
\theoremstyle{remark}
\newtheorem*{claim}{Claim}
\newcommand{\Bb}{\mathcal B}
\newcommand{\Ss}{\mathcal S}
\newcommand{\Mm}{\mathcal M}
\newcommand{\sd}{\triangle}
\newcommand{\del}{\backslash}
\DeclareMathOperator{\cl}{cl}  
\DeclareMathOperator{\rank}{rank}  
\DeclareMathOperator{\loops}{loops}  
\DeclareMathOperator{\comp}{comp}  
\newcommand{\GB}{(G,\Bb)}
\newcommand{\GBp}{(G',\Bb')}
\newcommand{\comout}[1]{}
\begin{document}

\title
[Excluded minors for graphical matroids]
{On excluded minors for classes of graphical matroids}
\date{\today} 
\author[Funk \and Mayhew]{Daryl Funk \and Dillon Mayhew}
\address{Department of Mathematics and Statistics, Victoria University of Wellington, New Zealand}
\email{daryl.funk@vuw.ac.nz} 
\email{dillon.mayhew@vuw.ac.nz}
\thanks{Supported by a Rutherford Discovery Fellowship.}
\keywords{frame matroids, lifted-graphic matroids, quasi-graphic matroids, excluded minors}
\subjclass[2000]{05B35}

\begin{abstract}
Frame matroids and lifted-graphic matroids are two distinct minor-closed classes of matroids, each of which generalises the class of graphic matroids.  
The class of {quasi-graphic} matroids, recently introduced by Geelen, Gerards, and Whittle, simultaneously generalises both the classes of frame and lifted-graphic matroids.  
Let $\Mm$ be one of these three classes, and let $r$ be a positive integer.  
We show that $\Mm$ has only a finite number of excluded minors of rank $r$.  
\end{abstract}

\maketitle

A matroid is a \emph{frame matroid} if it may be extended so that it has a basis $B$ such that every element is spanned by at most two elements of $B$.  
Such a basis is a \emph{frame} for the matroid.  
A matroid $M$ is a \emph{lifted-graphic matroid} if there is a matroid $N$ with $E(N) = E(M) \cup \{e\}$ such that $N \del e = M$ and $N/e$ is graphic.  

Frame matroids are a natural generalisation of graphic matroids: 
the cycle matroid $M(G)$ of a graph $G$ is naturally extended by adding its vertex set $V(G)$ as its frame, and declaring each non-loop edge to be minimally spanned by its endpoints. 
Classes of representable frame matroids play an important role in the matroid-minors project of Geelen, Gerards, and Whittle \cite[Theorem
3.1]{GeelenGerardsWhittle:HighlyConnected}, analogous to that of graphs embedded on surfaces in graph structure theory.  

Frame matroids form a minor-closed class.  
Despite its importance, little is known about its excluded minors.  
Zaslavsky has exhibited several in \cite{MR1273951}.  
Bicircular matroids are a relatively well-studied proper minor-closed class of frame matroids; it is known that this class has only a finite number of excluded minors \cite{exminorsforbicircular}.  
There are also natural proper minor-closed classes of frame matroids, and of lifted-graphic matroids, that have, for any fixed $r \geq 3$, infinitely many excluded minors of rank $r$ \cite{MR3267062}.  
The first systematic study of excluded minors for the class of frame matroids is \cite{MR3588716}, in which 18 excluded minors of connectivity 2 for the class of frame matroids is exhibited, and it is proved that any other excluded minor of connectivity 2 is a 2-sum of a 3-connected non-binary frame matroid with $U_{2,4}$.  
The class of lifted-graphic matroids is minor-closed.  
Less is known of their excluded minors than those for frame matroids.  

Here we prove the following theorems.  

\begin{mainthm} \label{thm:finitenumberexminfixedrankFRAME} 
Let $r$ be a positive integer.  
There are only a finite number of excluded minors of rank $r$ for the class of frame matroids.  
\end{mainthm}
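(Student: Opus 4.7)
The aim is to exhibit a function $N(r)$ such that every excluded minor of rank $r$ has at most $N(r)$ elements; since there are only finitely many matroids of a given rank on a ground set of bounded size, this suffices. Let $M$ be an excluded minor of rank $r$, and suppose for contradiction that $|E(M)|$ is very large relative to $r$. Because the class of frame matroids is closed under parallel and series extensions, $M$ is simple and cosimple. For each $e\in E(M)$, the minor $M\setminus e$ is a frame matroid and hence has a representation by a biased graph $(G_e,\Bb_e)$; as $\rank(M\setminus e)\le r$, the graph $G_e$ has at most $r+1$ vertices while carrying $|E(M)|-1$ edges, so $G_e$ is extremely dense.

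I would first reduce to a highly-connected case. The connectivity-$2$ situation is controlled by the structure theorem of \cite{MR3588716}, which exhibits any such excluded minor as a $2$-sum involving $U_{2,4}$; a bounded-rank argument on the two sides of the $2$-sum then limits the rank-$r$ excluded minors of connectivity at most $2$ to a finite list. After this reduction I may assume $M$ is $3$-connected.

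The technical core is a uniqueness-and-compatibility statement for biased-graph representations of sufficiently-connected frame matroids of bounded rank. Specifically, when $N$ is a $3$-connected frame matroid of rank $\le r$ with enough elements, the distinct biased-graph representations of $N$ form a family of size bounded by some function $g(r)$, once the known biased-graph twisting operations (Whitney twists, re-signings, and the further moves specific to biased graphs) are factored out. Applying this to every $M\setminus e$ and using pigeonhole over the $|E(M)|$ choices of $e$, I would produce a large subset $S\subseteq E(M)$ and a single biased graph $(G,\Bb)$ on at most $r+1$ vertices such that $(G,\Bb)\setminus e$ represents $M\setminus e$ for every $e\in S$. Since $M$ is $3$-connected and $|S|$ is large, the frame matroid of $(G,\Bb)$ must then equal $M$ on all of $E(M)$, contradicting the assumption that $M$ is not a frame matroid.

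The main obstacle I foresee lies in the uniqueness/compatibility input above. Biased graphs possess a strictly richer family of representation ambiguities than graphs do, and bounding the number of non-equivalent biased-graph representations of $M\setminus e$ --- together with showing that representations of different single-element deletions glue into a common biased graph for $M$ --- is the part that genuinely requires the structural theory of biased-graph representations of frame matroids. The bounding of the ground set and the pigeonhole assembly should be routine once that structural input is in place.
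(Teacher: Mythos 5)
Your skeleton (bound $|E(M)|$ in terms of $r$, pigeonhole over single-element deletions, glue agreeing representations into a biased graph for $M$) is the paper's skeleton too, but the two inputs you defer are exactly where the content lies, and as stated they do not go through. The ``uniqueness-and-compatibility'' lemma you assume --- that a $3$-connected frame matroid of rank at most $r$ with many elements has at most $g(r)$ biased-graph representations once twisting operations are factored out --- is not available: the paper points out (citing Section~1.3 of \cite{MR3588716}) that there are highly connected frame matroids of fixed rank with arbitrarily many biased-graph representations, so $3$-connectivity does not rescue a raw bound; and quotienting by equivalence moves does not help the pigeonhole anyway, since the gluing step needs graphs that literally agree on the common edge set, not equivalence classes. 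The paper's substitute is the real work: it restricts attention to \emph{canonical} representations (fewest loops), analyses the fixing set formed by the long lines, shows via Whitney's line-graph theorem that the fixing graph is unique and that the canonical representation of $M|\cl(X)$ is unique up to pendant roll-ups, and from this bounds the number of canonical graphs of a simple connected rank-$r$ frame matroid by roughly $r^{5r^2}$; no reduction to $3$-connectivity is used. (Your claim that \cite{MR3588716} finishes the connectivity-$2$ case is also unjustified as stated: knowing such an excluded minor is a $2$-sum of a $3$-connected frame matroid with $U_{2,4}$ does not bound its size, since the $3$-connected part is not a priori bounded.)

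The second gap is the final inference that if $(G,\Bb)\setminus e$ represents $M\setminus e$ for every $e$ in a large set $S$, then $F(G,\Bb)=M$. Two distinct matroids can agree on all deletions of elements of a large set (the discrepancy can hide in sets containing all of $S$), so ``$M$ is $3$-connected and $|S|$ is large'' is not enough. The paper makes this step work by pigeonholing not over all of $E(M)$ but over the points of a single long line: choosing collinear $e,f,g$ whose canonical graphs agree on the common ground set and in which $e,f,g$ are links, the set $\{e,f,g\}$ is a circuit of both $M$ and the constructed biased graph (with $e$ added in parallel to $f$ and $g$), and every other circuit of $M$ contains at most two of $e,f,g$, hence is a circuit of one of the represented deletions --- that is what verifies $F(G,\Bb)=M$. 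Correspondingly, the bound on $|E(M)|$ is not obtained directly but by first proving an excluded minor has no line longer than a function of $r$, whence $|E(M)|\le (k-1)\binom{r}{2}+1$ using a graph for $M\setminus e$ on $r$ vertices. So the proposal has the right outline, but both the representation-counting lemma and the gluing verification require the long-line/canonical-representation machinery (or some substitute), and without it the argument is incomplete.
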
 

\begin{mainthm} \label{thm:finitenumberexminfixedrankLIFT} 
Let $r$ be a positive integer.  
There are only a finite number of excluded minors of rank $r$ for the class of lifted-graphic matroids.  
\end{mainthm}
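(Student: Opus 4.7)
The plan is to bound $|E(M)|$ by a function of $r$ for any excluded minor $M$ of rank $r$; together with the rank constraint, this yields finiteness up to isomorphism. First I observe that any such $M$ must be simple: if $\{e, f\}$ were a parallel pair in $M$, then $M \setminus e \in \Mm$ admits a biased-graph representation $\GB$, and the standard parallel extension of $\GB$ at $f$ (adjoining a new edge $e$ with the same endpoints as $f$ and declaring $\{e, f\}$ balanced, or, if $f$ is an unbalanced loop, adjoining $e$ as an additional unbalanced loop at the same vertex) produces a valid biased graph whose lifted-graphic matroid is $M$; a balanced-loop extension similarly rules out matroid loops.

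Next, I fix an element $e_0 \in E(M)$ and let $\GB$ be a biased-graph representation of $M \setminus e_0$, so that $|V(G)| \leq r+1$. Since $M \setminus e_0$ is simple, no balanced digons or balanced loops appear in $\GB$, and each vertex carries at most one (unbalanced) loop. Moreover, $k$ pairwise-parallel edges between a single vertex pair restrict the matroid to $U_{2,k}$: any three such edges form a theta in $G$ with all three digons unbalanced, hence a $3$-circuit of $L\GB$. Consequently, if $M$ contains no $U_{2, k(r)}$-restriction for a suitably chosen threshold $k(r)$, then each vertex pair of $G$ supports fewer than $k(r)$ edges, and
\[
|E(M)| \;\leq\; 1 + (k(r)-1)\binom{r+1}{2} + (r+1),
\]
delivering the required bound.

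It remains to treat the case in which $M$ admits a restriction $M|S \cong U_{2, k}$ with $k$ very large compared with $r$. Here I exploit the rigidity of biased-graph representations in the presence of a large uniform restriction: for sufficiently large $k$, the only biased-graph realisation of $U_{2,k-1}$ consists of two vertices joined by $k-1$ parallel unbalanced edges, so for every $e \in S$ the representation of $M \setminus e$ places $S \setminus \{e\}$ as parallel unbalanced edges between a fixed pair of vertices $u, v$. Comparing representations $\GB$ and $\GBp$ of $M \setminus e$ and $M \setminus f$ for distinct $e, f \in S$, a stabilizer-type argument shows that they agree up to isomorphism on $E(M) \setminus \{e, f\}$. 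Adjoining $e$ and $f$ back to $\GB$ as two further parallel unbalanced edges between $u$ and $v$ then produces a biased graph whose lifted-graphic matroid agrees with $M$ on $M \setminus e$ (by construction) and on $M \setminus f$ (via the identification of representations), forcing equality with $M$ and contradicting $M \notin \Mm$.

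The main obstacle is the rigidity/stabilizer argument of the third paragraph: one must establish that biased-graph representations of $M \setminus e$ and $M \setminus f$ can be canonically aligned on the common ground set $E(M) \setminus \{e, f\}$, and verify that the resulting two-element extension actually represents $M$ rather than some other matroid agreeing with $M$ only on $M \setminus e$. I expect this step to follow the pattern of the corresponding stabilizer step for frame matroids in Theorem~\ref{thm:finitenumberexminfixedrankFRAME}, adapted to the different circuit structure of lifted-graphic matroids (balanced cycles, theta-subgraphs with no balanced cycle, and tight or loose handcuffs).
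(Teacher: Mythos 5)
Your overall shape (bound the length of a line in an excluded minor, hence bound $|E(M)|$, hence finiteness at fixed rank) is the same as the paper's, and your reduction to the long-line case is fine. But the step you call a ``stabilizer-type argument'' is a genuine gap, and it is exactly where all the work lies. It is not true that lift representations of $M \del e$ and $M \del f$ can be canonically aligned on $E(M)\setminus\{e,f\}$: lifted-graphic matroids are far from uniquely representable (the paper stresses that, unlike frame matroids, they may have many fixing graphs, and a long line in a simple lift representation need not be all parallel links --- one element may sit as an unbalanced loop at an arbitrary vertex, even an isolated one, so your rigidity claim for $U_{2,k-1}$ is already false). The paper does not prove any such uniqueness; instead it defines \emph{canonical} lift graphs, proves a quantitative bound (Lemmas \ref{lem:liftmorethanonelongline}--\ref{lem:liftjustonelongline}, giving fewer than $n(r)=r^{6r^2}$ canonical graphs for a simple rank-$r$ lifted-graphic matroid), and then pigeonholes over a sufficiently long line to find \emph{three} elements $e,f,g$ whose deletions admit canonical graphs that literally coincide off $\{e,f,g\}$ and in which $e,f,g$ are links. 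Without some counting statement of this kind your alignment step has no support, and with only an ``up to isomorphism'' identification you would still need to control how the isomorphism acts on the rest of the graph, not just on the line.

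There is a second, smaller gap in your final step: from $L(G^+)\del e = M\del e$ and $L(G^+)\del f = M\del f$ you conclude ``forcing equality with $M$,'' but agreement of two matroids on two single-element deletions does not in general force equality --- sets containing both $e$ and $f$ are uncontrolled. This can be repaired using the line itself (for any third element $g$ of the line, $\{e,f,g\}$ is a circuit of both matroids, and a rank computation through $A\cup g$ closes the gap), but you must say so; the paper's use of three collinear elements is designed precisely so that every circuit containing at most two of $e,f,g$ lies in a common single-element deletion, while the only circuit containing all three is the triangle $\{e,f,g\}$, which visibly appears as a theta of unbalanced cycles in the constructed graph. (Also note a slip: $\GB$ represents $M\del e$, so $f$ is already present and only $e$ is adjoined, and you must also rule out $e$ or $f$ being forced to appear as a loop, which the paper handles by taking the line long enough.)
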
 

The prevailing belief among members of the matroid community was that each of these classes should have only a finite number of excluded minors.  
However, Chen and Geelen \cite{Chen_Geelen_frame} have recently settled the question, rather surprisingly, by exhibiting, for each class, an infinite family of excluded minors. 
Each family consists of a sequence of matroids $(M_r)_{r \geq 7}$, defined using a sequence of graphs $(G_r)_{r \geq 7}$, with each excluded minor $M_r$ having rank $r$.  
Theorems \ref{thm:finitenumberexminfixedrankFRAME} and \ref{thm:finitenumberexminfixedrankLIFT} say that, like Chen and Geelen's families, every infinite collection of excluded minors for these classes must contain matroids of arbitrarily large rank.  
Chen and Geelen point out that, ``The existence of an infinite set of excluded minors does not necessarily prevent us from describing a class explicitly"; they point to Bonin's excluded minor characterisation of lattice-path matroids \cite{MR2718679} as an encouraging example.  
Chen and Geelen's two infinite families of excluded minors are of a similar flavour to Bonin's infinite collections of excluded minors: Bonin's characterisation has three easily described infinite sequences of excluded minors, each consisting of a set of matroids indexed by the positive integers, of ever increasing and unbounded ranks.  
Theorems \ref{thm:finitenumberexminfixedrankFRAME} and \ref{thm:finitenumberexminfixedrankLIFT}, therefore, may be seen as support for Chen and Geelen's optimism. 

In \cite{Quasi}, Geelen, Gerards, and Whittle introduce the class of {quasi-graphic matroids}, as a common generalisation of the classes of frame and lifted-graphic matroids.  
For a vertex $v$, denote by $\loops(v)$ the set of loops incident to $v$.  
Given a matroid $M$, a \emph{framework} for $M$ is a graph $G$ satisfying
\begin{enumerate} 
\item $E(G) = E(M)$, 
\item for each component $H$ of $G$, $r(E(H)) \leq |V(H)|$, 
\item for each vertex $v \in V(G)$, $\cl(E(G-v)) \subseteq E(G-v) \cup \loops(v)$, and 
\item if $C$ is a circuit of $M$, then the graph induced by $E(C)$ has at most two components.  
\end{enumerate} 
A matroid is \emph{quasi-graphic} if it has a framework.  
Chen and Geelen conjecture that the class of quasi-graphic matroids has only finitely many excluded minors \cite{Chen_Geelen_frame}.  
We prove that, like the classes of frame and of lifted-graphic matroids, when fixing the rank this is indeed the case. 

\begin{mainthm} \label{thm:finitenumberexminfixedrankQUASI} 
Let $r$ be a positive integer.  
There are only a finite number of excluded minors of rank $r$ for the class of quasi-graphic matroids.  
\end{mainthm}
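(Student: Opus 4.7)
The plan is to bound $|E(M)|$ for an excluded minor $M$ of rank $r$ by some function of $r$, from which the theorem follows since there are only finitely many matroids of bounded rank and size.

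First I would carry out standard reductions. The class of quasi-graphic matroids is closed under adding a loop (attach a loop edge to any vertex of a framework), adding a parallel element (duplicate an existing non-loop edge), and taking direct sums (form the disjoint union of frameworks); each is a straightforward verification of the four framework axioms. Therefore every excluded minor is simple, loopless, and connected, since otherwise a framework of a proper minor of $M$ could be extended to a framework of $M$ itself, contradicting $M \notin \Mm$. Since every matroid of rank at most $2$ is quasi-graphic (a two-vertex framework with appropriate parallel edges and loops suffices), we may assume $r \geq 3$.

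The crux of the argument is a bound on simple connected quasi-graphic matroids. I would aim to prove that every such matroid $N$ of rank $\rho \geq 3$ satisfies $|E(N)| \leq g(\rho)$ for some function $g$. Take a framework $G$ of $N$ with no removable isolated vertices. Framework property (2), together with connectedness of $N$ and results from \cite{Quasi}, gives $|V(G)| \leq \rho + 1$. For each pair of vertices $u,v$ of $G$, property (3) applied at $u$ and at $v$ bounds the rank, in $N$, of the set of $uv$-edges of $G$; combined with simpleness of $N$ this bounds the number of such edges. A similar analysis bounds the loops at each vertex. Summing over the $O(\rho^2)$ edge-types then yields $|E(N)| \leq g(\rho)$.

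Now, given an excluded minor $M$ of rank $r \geq 3$ and any $e \in E(M)$, the matroid $M \setminus e$ is quasi-graphic and (since $M$ is simple and connected of rank $\geq 2$) of rank $r$, though it need not be connected. The previous bound controls the sizes of those connected components of $M \setminus e$ of rank $\geq 3$. The main obstacle---and the step I expect to be the hardest---is handling connected components of rank at most $2$, because simple quasi-graphic matroids of rank $2$ (such as $U_{2,n}$) can be arbitrarily large, so no naive size bound is available at low rank. I would argue that if a rank-$\leq 2$ component of $M \setminus e$ were sufficiently large, then for an appropriate choice of framework of $M \setminus e$ the combinatorial structure of how $e$ attaches to the rest of $M$ would permit extending that framework to one for $M$, contradicting $M \notin \Mm$. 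The required case analysis likely parallels the low-connectivity arguments used in the proofs of Theorems~\ref{thm:finitenumberexminfixedrankFRAME} and~\ref{thm:finitenumberexminfixedrankLIFT} for the frame and lifted-graphic classes, together with the structural results on frameworks from \cite{Quasi}.
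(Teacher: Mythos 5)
There is a genuine gap, and it is at the step you yourself identify as the crux. Your claimed bound $|E(N)| \leq g(\rho)$ for simple connected quasi-graphic matroids $N$ of rank $\rho \geq 3$ is false. Simplicity of the matroid only forbids loops and parallel \emph{pairs}, i.e.\ multiple elements in a common rank-$1$ flat; it places no bound on the size of a rank-$2$ flat. In a framework $G$, a class of edges joining the same two vertices $u,v$ represents a line of $N$, and when the cycles through these edges are unbalanced no two of them are parallel in $N$. For instance, take three vertices $u,v,w$, put $n$ links between $u$ and $v$ with every cycle unbalanced, and add links $uw$ and $vw$: this is a simple connected frame (hence quasi-graphic) matroid of rank $3$ with $n+2$ elements. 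So "property (3) bounds the rank of the set of $uv$-edges, combined with simpleness this bounds their number" does not work: the rank is indeed at most $2$, but that is compatible with arbitrarily many elements. Long lines are exactly why fixed-rank simple matroids in these classes are not finite in number, and they are the central difficulty of the theorem, not the rank-$\leq 2$ components of $M \setminus e$ that you flag as the hardest step.

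The paper's proof has to confront this head on: it shows (Theorem~\ref{thm:exminhasnolonglineQUASI}) that an excluded minor $M$ of rank $r$ cannot contain a line of length $k=k(r)$. This is done by bounding the number of \emph{canonical} frameworks of a simple quasi-graphic matroid purely in terms of $r$ (Lemma~\ref{lem:boundgraphsforquasi}, after the vertex bound $|V(G)|\leq 2r$ of Lemma~\ref{lem:QGverticesintermsofrank} --- note your bound $\rho+1$ is also not what holds for frameworks, because of isolated loops). If $M$ had a very long line $l$, pigeonholing over the boundedly many canonical frameworks of the matroids $M\setminus e$, $e\in l$, produces three elements $e,f,g\in l$ whose frameworks agree off $\{e,f,g\}$ and represent $e,f,g$ as links; adding $e$ in parallel with $f$ and $g$ then yields a framework for $M$ itself, a contradiction. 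Only after long lines are excluded does the count $|E(M)|\leq (k-1)\binom{2r}{2}+1$ go through. Your proposal is missing this entire mechanism, and the bound it substitutes for it is not true.
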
 

\section{Preliminaries} 
\label{sec:Preliminaries} 

\subsection{Frame matroids} 
Zaslavsky \cite{MR1273951} has shown that the class of frame matroids is precisely that of matroids arising from \emph{biased graphs}, as follows.  
Let $M$ be a frame matroid on ground set $E$, with frame $B$.  
By adding elements in parallel if necessary, we may assume $B \cap E = \emptyset$.  
Hence for some matroid $N$, $M = N \del B$ where $B$ is a basis for $N$ and every element $e \in E$ is minimally spanned by either a single element or a pair of elements in $B$.  
Let $G$ be the graph with vertex set $B$ and edge set $E$, in which $e$ is a loop with endpoint $f$ if $e$ is parallel with $f \in B$, and otherwise $e$ is an edge with endpoints $f, f' \in B$ if $e \in \cl\{f,f'\}$.  
The edge set of a cycle of $G$ is either independent or a circuit in $M$.  
A cycle $C$ in $G$ whose edge set is a circuit of $M$ is said to be \emph{balanced}; otherwise $C$ is \emph{unbalanced}.  
Let $\Bb$ be the collection of balanced cycles of $G$.  
The pair $(G,\Bb)$ is called a \emph{biased graph}; one may think of the pair as a graph equipped with the extra information of the \emph{bias}---balanced or unbalanced---of each of its cycles.  
A \emph{theta graph} consists of a pair of distinct vertices and three internally disjoint paths between them.  
The circuits of $M$ are precisely those sets of edges inducing one of: 
a balanced cycle, 
a theta subgraph in which all three cycles are unbalanced, 
two edge-disjoint unbalanced cycles intersecting in just one vertex, or 
two vertex-disjoint unbalanced cycles along with a minimal path connecting them.  
The later two biased subgraphs are called \emph{handcuffs}, \emph{tight} or \emph{loose}, respectively.  
Such a biased graph $\GB$ \emph{represents} the frame matroid $M$, and we write $M = F\GB$.  
Since the collection $\Bb$ of balanced cycles of $G$ is determined by the matroid $M$, we may speak simply of the graph $G$ as a frame representation of $M$, with its collection of balanced cycles being understood as implicitly given by $M$. 
Thus we may unambiguously refer to $G$ as a \emph{frame graph for $M$}, or say that $M$ \emph{has} a frame graph $G$. 
When it is clear from context that $G$ is a frame representation of $M$, we say simply that $G$ is a \emph{graph} for $M$ or that $M$ \emph{has} a graph $G$. 

\subsection{Lifted-graphic matroids} 
Let $N$ be a matroid on ground set $E \cup \{e\}$, and suppose $G$ is a graph with edge set $E$ and with cycle matroid $M(G)$ equal to $N/e$.  
Then $M = N \del e$ is a lifted-graphic matroid.  
Each cycle in $G$ is either a circuit of $N$, or together with $e$ forms a circuit of $N$.  
Again, cycles whose edge set is a circuit of $M$ are said to be balanced, and those whose edges form an independent set are unbalanced.  
Zaslavsky has shown \cite{Zaslavsky:BG2} that the circuits of $M$ are precisely those sets of edges inducing one of: a balanced cycle, a theta subgraph in which all three cycles are unbalanced, two edge disjoint unbalanced cycles meeting in just one vertex, or two vertex-disjoint unbalanced cycles.  
Letting $\Bb$ denote the collection of balanced cycles of $G$, we again say the biased graph $\GB$ so obtained \emph{represents} the lifted-graphic matroid $M$; we write $M = L\GB$.  
As with frame matroids, we may more simply say $G$ is a \emph{lift graph for $M$}, or that $M$ \emph{has} a lift graph $G$, with its collection of balanced cycles being implicitly given by $M$.  
Similarly, when clear in context that $G$ is a lift graph for $M$, we may say simply that $G$ is a \emph{graph} for $M$, and that $M$ \emph{has} the graph $G$. 

\subsection{Quasi-graphic matroids} 
In \cite{Quasi}, the authors show that the class of quasi-graphic matroids includes the classes of frame and lifted-graphic matroids, by showing that if $G$ is a graph for a frame matroid $M$, then $G$ is a framework for $M$, and that similarly, if $G$ is a graph for a lift matroid $N$, then $G$ is a framework for $N$.  
For the sake of consistency and to aid exposition, if $M$ is quasi-graphic and $G$ is a framework for $M$, we as well refer to $G$ as a graph \emph{for} $M$, or say that $M$ \emph{has} a graph $G$.  

Note that in general, given a frame matroid $M$ there may be many frames for $M$, and so many frame graphs for $M$.  
Similarly, given a lift $N$ of a graphic matroid, there may be many graphs with cycle matroid $N/e$, and so many lift graphs for $N$.  
Of course, nor is there any guarantee that a quasi-graphic matroid has a unique framework. 

Let $M$ be a quasi-graphic matroid, and $G$ a graph for $M$.  
Again, call these cycles of $G$ that are circuits of $M$ \emph{balanced}.  
In \cite[Lemma 3.3]{Quasi} it is shown that the circuits of $M$ are precisely those edge sets appearing in $G$ as a balanced cycle, or one of the following subgraphs: 
a theta in which all cycles are unbalanced, 
a pair of unbalanced cycles meeting in exactly one vertex (tight handcuffs), 
a pair of vertex disjoint unbalanced cycles together with a minimal path linking them (loose handcuffs), or 
a pair of vertex disjoint unbalanced cycles. 

In any case, whether $M$ is frame, lifted-graphic, or more generally, quasi-graphic, if $G$ is a graph for $M$, then the collection $\Bb$ satisfies the following property: 
if $C$ and $C'$ are two balanced cycles contained in a common theta subgraph $T$ of $G$, then the third cycle $C \sd C'$ contained in $T$ is also balanced.   
Any collection of cycles with this property is said to satisfy the \emph{theta property}. 
Zaslavsky has shown \cite{Zaslavsky:BG2} that conversely, in order that an arbitrary collection $\Bb$ of cycles of a graph $G$ define a frame matroid $F\GB$ or lifted-graphic matroid $L\GB$, it is only necessary that $\Bb$ be chosen so that it satisfies the theta property.  

\subsection{Working with graphs for matroids} 
If $G$ is a graph for $M$, and $e \in E(M)=E(G)$, we refer to $e$ as a point, an element, or an edge.  
It is at times convenient that paths, cycles, and induced subgraphs include their vertices, and at other times more convenient that they consist of just their edge sets.  
We will be explicit should context fail to make clear which object we have in mind.  
Edges may be referred to as \emph{loops}, having just one endpoint, or \emph{links}, having two distinct endpoints, when it is important that the distinction be clear. 
We write $e=uv$ to indicate that the link $e$ has distinct endpoints $u$ and $v$.  

For a subset $X \subseteq E(M)$ and a graph $G$ for $M$, we write $G[X]$ for the subgraph of $G$ induced by $X$.  
Evidently, $G[X]$ is a graph for $M|X$.  
Likewise, for a subset $U \subseteq V(G)$ of vertices, we write $G[U]$ for the subgraph of $G$ induced by $U$.  
If $X$ is a set of edges, we denote by $V(X)$ the set of vertices of $G$ incident to an edge in $X$.  
If $G[X]$ contains no unbalanced cycle, then $G[X]$ is \emph{balanced}; otherwise $G[X]$ is \emph{unbalanced}.  
If $M$ is a frame matroid, then the rank of a set $X \subseteq E(M)$ is given by $r(X) = |V(X)| - b(X)$, where $b(X)$ is the number of balanced components of $G[X]$.  
Thus if $M$ is connected, $r(M)=|V(G)|$, unless $G$ is balanced.  
Since a balanced biased graph represents a graphic matroid, and the frame matroids we consider here are connected and non-graphic, we will always have $r(M)=|V(G)|$ when $G$ is a graph for a frame matroid $M$.  
If $M$ is a lifted-graphic matroid, then the rank of a set $X \subseteq E(M)$ is given by $r(X) = |V(X)| - c(X) + \delta$, where $c(X)$ is the number of components of $G[X]$, and $\delta = 0$ if $G[X]$ is balanced and $\delta=1$ otherwise.  
Thus if $M$ is a non-graphic lifted-graphic matroid, and $G$ is a connected graph for $M$, we have $r(M) = |V(G)|$.  

\subsection{Excluded minors are simple} 
The first sentence in the following lemma appears in \cite{MR3588716}. 
The second sentence has a similarly straightforward proof as the first.  

\begin{lem} \label{lem:ex_min_simple_cosimple} 
An excluded minor for the class of frame matroids is connected, simple, and cosimple.  
An excluded minor for the class of lifted-graphic matroids is simple and cosimple.  
\end{lem}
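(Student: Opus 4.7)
The plan is to argue each of the listed properties by contradiction via extension: if an excluded minor $M$ fails one of the properties, identify a proper minor $N \in \Mm$, take a graph $G$ for $N$, and extend $G$ to a graph for $M$, contradicting the excluded-minor hypothesis. Each case corresponds to a standard single-element extension of biased graphs.

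For connectedness in the frame case, if $M = M_1 \oplus M_2$ with both summands nontrivial, then each $M_i$ is a proper minor of $M$ and hence frame, with a frame graph $G_i$; the disjoint union $G_1 \sqcup G_2$ is then a frame graph for $M$. The analogous statement fails for lift matroids, as the direct sum of two non-graphic lift matroids is not in general a lift matroid, which is why connectedness is omitted from the lift statement.

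For simplicity and cosimplicity I would handle four sub-cases. (a) A matroid loop $e$ is realised in a biased graph as a balanced loop at any vertex, so a graph for $M \setminus e$ extends trivially to one for $M$. (b) For a parallel pair $\{e,f\}$, the element $f$ appears in a graph for $M \setminus e$ as either a link or an unbalanced loop; duplicating this edge (as a balanced digon in the link case, or as a second unbalanced loop at the appropriate vertex) yields a graph for $M$. (c) A coloop $e$ can be realised by attaching $e$ as a new link to a newly-added pendant vertex in a graph for $M \setminus e$. (d) For a series pair $\{e,f\}$, take a graph $G$ for $N = M \setminus f$; since $e$ is a coloop of $N$, the graph $G$ can be chosen with $e$ a pendant link $uw$, and then $f$ is added as a new link from $w$ to any other vertex, so that the degree-$2$ vertex $w$ forces every circuit through either of $e, f$ to contain both.

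The final step in each case is to verify that the extended biased graph represents $M$, which reduces to the uniqueness of the appropriate single-element extensions in matroid theory. The main obstacle is case (d), requiring the existence of a graph for $N$ in which $e$ appears as a pendant link; this can be arranged using standard operations on biased graphs, exploiting that $e$ is a coloop of $N$. For the lift case, Zaslavsky's circuit description differs slightly from the frame case---two vertex-disjoint unbalanced cycles are circuits even without a connecting path---but the modifications needed in constructions (a)--(d) are cosmetic, which is why the paper notes that the proof of the second sentence is similarly straightforward.
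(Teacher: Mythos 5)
Your overall strategy---extend a graph for a one-element minor to a graph for $M$---is exactly the paper's (the frame statement is cited to the literature, and the quasi-graphic analogue, Lemma~\ref{lem:ex_min_simple_cosimple_QUASI}, is proved by precisely these constructions). Your connectedness argument, the observation that lifted-graphic matroids are not closed under direct sum, and cases (a) loops, (b) parallel pairs, and (c) coloops are all correct and match the paper.

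Case (d) has a genuine gap. The real obstacle there is not, as you suggest, arranging for $e$ to appear as a pendant link in a graph for $N = M \setminus f$: since $e$ is a coloop of $N$ it is a direct summand and can be hung off any vertex. The problem is that $N$ together with the condition ``$\{e,f\}$ is a series pair of $M$'' does not determine $M$, so no recipe that attaches $f$ ``to any other vertex'' of an arbitrarily chosen graph for $N$ can be guaranteed to output a graph for $M$. Concretely, with $N = U_{3,3}$ on $\{e,g,h\}$, the matroids $M$ having $M\setminus f = N$ and $\{e,f\}$ a cocircuit include $U_{3,4}$ and the two matroids in which $\{e,f,g\}$, respectively $\{e,f,h\}$, is a triangle; these demand different placements of $f$. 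Worse, the attachment point of the pendant vertex $w$ is itself constrained by $M$: represent $U_{3,3}$ as a star with centre $z$ and put $e=zw$ at the centre; then for every choice of second endpoint $x$ (and for either bias of any cycle created, and likewise if $f$ is placed as a loop), the set $\{e,f,g,h\}$ fails to be a circuit of the resulting biased graph, so $U_{3,4}$ cannot be recovered from that starting graph at all. Your degree-two-vertex observation only shows that $\{e,f\}$ is a series pair of whatever matroid you have built, not that you have built $M$. The correct move, which the paper makes in the quasi-graphic proof, is to contract rather than delete: $M$ is the unique series extension of $M/e$ along $f$, and subdividing the link $f=uv$ in a graph for $M/e$ into $f=uw$, $e=wv$ (or, when $f$ is a loop at $u$, replacing it by the unbalanced digon $\{e,f\}$ on $\{u,w\}$ for a new vertex $w$---a subcase your link-only attachment also misses) realises exactly that series extension.
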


Note that the class of lifted-graphic matroids is not closed under direct sum: $U_{2,4}$ is lifted-graphic (the graph consisting of four edges linking a pair of vertices is a graph for $U_{2,4}$), but the direct sum of two copies of $U_{2,4}$ is an excluded minor for the class.  
The class of quasi-graphic matroids, like that of frame matroids, is closed under direct sum: if $G_1$ is a graph for $M_1$, and $G_2$ is a graph for $M_2$, then evidently the disjoint union of $G_1$ and $G_2$ is a graph for the direct sum of $M_1$ and $M_2$.  
We include a proof of the following elementary lemma, since the class has only recently been introduced.  

\begin{lem} \label{lem:ex_min_simple_cosimple_QUASI} 
An excluded minor for the class of quasi-graphic matroids is connected, simple and cosimple.  
\end{lem}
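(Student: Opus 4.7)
My plan is to verify each of connectedness, simplicity, and cosimplicity by contradiction: assuming the property fails, I would construct a framework for $M$ from one for a proper minor, contradicting that $M$ is an excluded minor.

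For connectedness, suppose $M = M_1 \oplus M_2$ with each $M_i$ a nonempty proper minor. Each $M_i$ is quasi-graphic with some framework $G_i$, and I would take $G = G_1 \sqcup G_2$ and check the four framework axioms: (1) and (2) hold componentwise; (3) holds because closure in a direct sum decomposes within summands; and (4) holds because every circuit of a direct sum lies in a single summand. As an immediate consequence, $M$ has neither a loop nor a coloop, since either would split off as a separate direct summand.

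For simplicity, it remains to forbid a parallel pair $\{e,f\}$. I would take a framework $G$ for $M\del e$ and form $G^+$ by adding $e$ with the same endpoint-set as $f$: a loop at $v$ if $f$ is a loop at $v$, a link with endpoints $u,v$ if $f=uv$. Axioms (1) and (2) are immediate since $e \in \cl_M(\{f\})$ adds no vertex and no rank. For axiom (3), the identities $\cl_{M\del e}(X) = \cl_M(X) \cap E(M\del e)$ for $X \subseteq E(M\del e)$ and $e \in \cl_M(X) \iff f \in \cl_M(X)$ transfer axiom (3) from $G$ to $G^+$ at each vertex. For axiom (4), every circuit of $M$ is either a circuit of $M\del e$ (handled by $G$), the pair $\{e,f\}$ itself (inducing a single component in $G^+$), or obtained from a circuit of $M\del e$ through $f$ by swapping $e$ for $f$, whose induced subgraph is unchanged since $e$ and $f$ share endpoints in $G^+$.

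For cosimplicity, coloops are already ruled out, so it remains to forbid a $2$-cocircuit $\{e,f\}$. By weak circuit--cocircuit orthogonality, every circuit of $M$ through $e$ also contains $f$, so $M/e = M/f$; denote this common contraction by $N$, a proper minor. Let $G$ be a framework for $N$; the element $f$ appears in $G$ as an edge, and I would argue $f$ may be taken to be a link $f=uv$---a balanced loop would make $f$ a matroid loop of $N$ and hence parallel to $e$ in $M$, contradicting simplicity, and an unbalanced loop can be pulled into a pendant link by adjoining a new vertex. Form $G^+$ by subdividing $f$: introduce a new vertex $w$, set $f = uw$, and add $e = wv$. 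The crucial axiom is (3) at $w$: $E(G^+ - w) = E(M) \setminus \{e,f\}$ is the complement of a cocircuit, hence a hyperplane of $M$, hence closed, so axiom (3) holds with equality at $w$. For the other vertices, the identity $\cl_M(X \cup \{e\}) = \cl_N(X) \cup \{e\}$ together with axiom (3) for $G$ and cocircuit orthogonality yield axiom (3) for $G^+$. For axiom (4), circuits of $M$ through $e$ also contain $f$ and correspond bijectively to circuits of $N$ through $f$; in $G^+$ such a circuit traces through the path $u{-}w{-}v$ in place of the edge $f$ in $G$, producing the same number of components. Circuits of $M$ avoiding $e$ are circuits of $N$ avoiding $f$, and their induced subgraphs are unaffected by the subdivision.

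The main obstacle is the cosimplicity case: disposing cleanly of the degenerate configuration in which $f$ a priori appears as an unbalanced loop of $G$, and verifying axiom (3) after subdivision at the original endpoints $u, v$---since $f$ remains in $E(G^+ - v)$, one must use the framework axioms for $G$ together with the quasi-graphic circuit characterisation of Geelen--Gerards--Whittle to show that $e$ does not enter $\cl_M(E(G^+ - v))$.
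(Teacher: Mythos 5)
Your connectedness and parallel-pair arguments are essentially the paper's, and deriving the exclusion of loops and coloops from connectedness (rather than by the paper's direct constructions) is fine. The genuine gap is in the series-pair case, at the step where you claim that if $f$ is an unbalanced loop at $u$ in a framework $G$ for $N=M/e$, it ``can be pulled into a pendant link by adjoining a new vertex.'' That move does not yield a framework for $N$: at the new pendant vertex, condition (3) requires $\cl_N(E(N)\setminus\{f\})\subseteq E(N)\setminus\{f\}$, i.e.\ it forces $f$ to be a coloop of $N$, which you have no right to assume. For a concrete failure, the graph consisting of an unbalanced loop $f$ at $u$ together with an unbalanced $2$-cycle $\{g,h\}$ on $\{u,x\}$ is a framework for $U_{2,3}$; after unrolling, $f\in\cl(\{g,h\})$ but $f$ is no longer in $E(G-w)\cup\loops(w)$ for the new vertex $w$, so condition (3) fails. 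Nor can the loop case be dismissed: there are quasi-graphic matroids (for instance those with an element lying on two long lines) in which $f$ is a loop in \emph{every} framework. And if one nevertheless carried out your two-step construction (unroll, then subdivide), the final graph would have a vertex incident only to $e$, where condition (3) forces $e$ to be a coloop of $M$ --- impossible, since $M$ is connected with at least two elements. A minor additional quibble: $M/e$ and $M/f$ have different ground sets, so they are isomorphic rather than equal; you do use the correct one ($M/e$, which contains $f$) afterwards.

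The paper handles the loop case directly and differently: delete the loop $f$ at $u$, add a single new vertex $w$, and place \emph{both} $e$ and $f$ as parallel $u$--$w$ links, so that $\{e,f\}$ becomes an unbalanced $2$-cycle. Condition (3) at $w$ then only needs $E(M)\setminus\{e,f\}$ to be closed, which holds because it is a hyperplane (the complement of the cocircuit $\{e,f\}$) --- exactly the observation you already made for the subdivision vertex, just applied to the right construction --- and the circuit forms are routine to check. Your link case (subdivision of $f$) coincides with the paper's, and your flagged verification of condition (3) at the old endpoints can indeed be completed using orthogonality with the cocircuit $\{e,f\}$ and condition (3) for $G$; the unrepaired defect is solely the treatment of $f$ as a loop.
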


\begin{proof} 
Let $M$ be an excluded minor for the class of quasi-graphic matroids.  
The fact that the class is closed under direct sum implies that $M$ is connected.  
Suppose $M$ has a loop $e$.  
There is a graph $G$ for $M \del e$.   
Adding a loop for $e$ incident to any vertex of $G$ yields a graph for $M$, a contradiction.  
Similarly, if $M$ has a coloop $f$, consider graph $G$ for $M/f$.  
Adding a new vertex $w$, choosing any vertex $v \in V(G)$, and adding edge $f= vw$ to $G$ yields a graph for $M$, a contradiction.  

Now suppose $M$ has a 2-element circuit $\{e,f\}$.  
Let $G$ be a graph for $M \del e$.  
If $f$ is a link in $G$, say $f = uv$, then let $G'$ be the graph obtained from $G$ by adding $e$ in parallel with $f$ so $e$ also has endpoints $u$ and $v$.  
If $f$ is a loop in $G$, say incident to $u \in V(G)$, then let $G'$ be the graph obtained from $G$ by adding $e$ as a loop also incident with $u$. 
Now $G'$ is a graph for $M$, a contradiction.  
Similarly, if $\{e,f\}$ is a series class in $M$, let $G$ be a graph for $M/e$.  
If $f$ is a link in $G$, say $f=uv$, then let $G'$ be the graph obtained from $G$ by deleting $f$, adding a new vertex $w$, and putting $f=uw$ and $e=wv$.  
If $f$ is a loop in $G$, say incident to $u \in V(G)$, let $G'$ be the graph obtained from $G$ by deleting $f$, adding a new vertex $w$, and adding edges $e$ and $f$ in parallel, both with endpoints $u,w$.   
Again, $G'$ is a graph for $M$, a contradiction.  
\end{proof} 

Because of Lemmas \ref{lem:ex_min_simple_cosimple} and \ref{lem:ex_min_simple_cosimple_QUASI}, we may omit the adjective ``unbalanced'' when speaking of loops---all loops in biased graphs in this paper are unbalanced. 

\subsection{Long lines, fixing sets, fixing graphs}
A \emph{line} is a rank 2 flat.  
A \emph{long line} is a line containing at least 6 rank-1 flats.  
If $l$ is a long line in a simple frame matroid $M$, and $G$ is a graph for $M$, then there are just three non-isomorphic possibilities for $G[l]$: 
each consists of a pair of vertices with $|l|-i$ links between them and $i$ loops, where $i \in \{0,1,2\}$, respectively, and if $i=2$ then the two loops are incident to different vertices; each has all cycles unbalanced.  
If $l$ is a long line in a simple lifted-graphic matroid $M$ and $G$ is a graph for $M$, then there are just three non-isomorphic possibilities for $l$: 
a pair of vertices with $|l|$ links between them, 
a pair of vertices $u,v$ with $|l|-1$ links between them and a single loop incident to $u$ or $v$, or 
a pair of vertices $u,v$ with $|l|-1$ links between them and a single loop incident to a vertex $w \notin \{u,v\}$.  
Hence if $l$ is a long line in a simple frame matroid, then in any graph for the matroid, there is a pair of vertices $u,v$ such that $l = E(G[\{u,v\}])$.  
If $l$ is a long line in a simple lifted-graphic matroid, then in any graph for the matroid, either there is a pair of vertices $u,v$ such that $l = E(G[\{u,v\}])$ or there are three vertices $u,v,w$ such that $l = E(G[\{u,v\}]) \cup \{e\}$, where $e$ is a loop incident to $w$.  
Furthermore, observe that in the latter case, $e$ is contained in every long line of $M$, and $\{e\}$ is the only loop in $G$. 

In fact, if $l$ is a long line in any simple quasi-graphic matroid $M$, and $G$ is a graph for $M$, then the non-isomorphic possibilities for $G[l]$ are just those described above for the cases that $M$ is frame or lifted-graphic. 
Thus, in any case, if $l$ is a long line in a matroid $M$ and $G$ is a graph for $M$, then there is a pair of vertices $u,v$ that we may unambiguously associate with $l$, namely, that pair which all elements of $l$ that are links in $G$ share as their endpoints.  
This pair of vertices are the \emph{endpoints} of $l$ in $G$.  

The \emph{fixing set} of a matroid $M$ is the set $X(M) = \{e \in E(M) : e \in l$ and $l$ is a long line of $M\}$.  
Two long lines are \emph{adjacent} if their union has rank 3. 
A \emph{component} $X'$ of the fixing set $X(M)$ of a matroid $M$ is a set $\bigcup_{l \in L} \{e : e \in l\}$ of elements contained in a nonempty maximal set of long lines $L$ such that if $l, l' \in L$ then $L$ contains a sequence of long lines $l_0, l_1, \ldots, l_t$ with $l=l_0$ and $l_t=l'$ such that each consecutive pair is adjacent.  
Let $M$ be a simple rank-$r$ frame or lifted-graphic matroid, with nonempty fixing set $X=X(M)$.  
Let $G$ be a graph for $M$.  
Let $W=W(G)$ be the graph obtained by deleting all edges of $G$ not contained in a long line, deleting loops, and replacing each parallel class of edges remaining with a single edge. 
Equivalently, $W$ is the simple graph on $V(G)$ in which vertices $u$ and $v$ are adjacent if they are the pair of endpoints for a long line in $G$.  
We call $W$ the \emph{fixing graph} of $G$, and associate to each edge $e_l = uv$ of $W$ the unique line $l$ of $M$ with endpoints $u$ and $v$.  

\subsection{Overview} 
These preliminaries in place, let us now briefly discuss our overall strategy. 
Let $\Mm$ be one of the classes of frame, lifted-graphic, or quasi-graphic matroids, and let $M$ be a rank-$r$ excluded minor for $\Mm$.  
We prove each of Theorems \ref{thm:finitenumberexminfixedrankFRAME}, \ref{thm:finitenumberexminfixedrankLIFT}, and \ref{thm:finitenumberexminfixedrankQUASI} by showing that $M$ does not contain an arbitrarily long line as a restriction. 
This bounds $|E(M)|$. 
We do this by choosing as \emph{canonical} those graphs for matroids in $\Mm$ with certain properties, then establishing a bound on the number of canonical graph representations for a matroid in $\Mm$.  
If $M$ has a sufficiently long line $l$, then we can use this bound to find three elements $e, f, g \in l$ such that there is a graph $G_e$ for $M \del e$, a graph $G_f$ for $M \del f$, and a graph $G_g$ for $M \del g$, such that $G_e \del f,g = G_f \del e,g = G_g \del e,f$. 
But these graphs can be used to construct a graph for $M$, a contradiction.  

The strategy for bounding the number of canonical graph representations for a frame, lifted-graphic, or quasi-graphic matroid is essentially the same in each case.  
The key is bounding the number of ways in which the elements in long lines may appear in a canonical graph representation for a matroid in each of these classes. 
That is, for a matroid $M \in \Mm$ with fixing set $X$, though there may be arbitrarily many graphs for $M$, we bound the number of induced subgraphs $G[X]$ that may appear in a canonical graph $G$ for $M$. 
So, though the fixing set $X$ of $M$ may contain arbitrarily many elements, there is a bound on the number of subgraphs that $X$ induces among all graphs for $M$.  

Our bounds are na\"ive, though somewhat less so for frame matroids.  
We bound the number of ways the fixing set may appear in a canonical graph representation by bounding the number of possible fixing graphs for a matroid. 
If $M$ is a frame matroid, this bound is sharp: there is just one (Lemma \ref{lem:fixing_graphs_unique}). 
Lifted-graphic and quasi-graphic matroids may in general have many fixing graphs; our bounds are therefore cruder for these classes.  
In any case, we are able to show that for each fixing graph, the remainder of a canonical graph for $M$ is determined by the graph representation on $E(M)-X$. 
There may, in general, be many graphs for $M$ that agree on their fixing graph but differ on $E(M)-X$.  
We completely side-step the problem of determining these graphs, as follows. 
For any matroid $M \in \Mm$, the number of elements outside the fixing set of $M$ is bounded. 
We simply bound the number of canonical graphs for $M$ by the number of graphs on $r$ vertices and $|E(M)-X|$ edges. 

The fact that a frame matroid $M$ has a unique fixing graph enables us show that a canonical frame graph representation for the restriction of $M$ to the closure of its fixing set is unique (Theorem \ref{lem:unique_rep_in_span_of_fixing_set}).  
The proof of this fact is the longest of the paper.  
We could, in fact, do without it, at the cost of a somewhat cruder bound on the number of canonical graphs for a simple frame matroid (for a matroid of rank $r$, worse than the bound of Lemma \ref{lem:boundoncanonicalrepsFRAME} by a factor of $r^{r^2}$).  
We believe the effort worthwhile, as the result may be of independent interest.

\section{Frame matroids} \label{sec:frame} 

Throughout this section, for a frame matroid $M$ and a graph $G$ for $M$, we always mean that $G$ is a frame graph for $M$. 
Let $M$ be a simple connected frame matroid.  
Observe that two long lines in a frame matroid are adjacent if and only if they share a vertex in any graph representation.  
Observe also that an element $e$ is contained in more than one long line if and only if these lines are adjacent and $e$ is represented as a loop incident to their shared endpoint in any graph representation.  
An element contained in just one long line may also be represented as a loop, in certain circumstances.  
We say $l$ is a \emph{pendant line} of $M$ if $l$ is a line with at least three elements, and $(l,E-l)$ is a 2-separation of $M$.  
Let $G$ be a graph for $M$, with nonempty fixing set $X$. 
Then $e_l$ is a pendant edge in a fixing graph of $G$ if and only if $e_l$ corresponds to a pendant line $l$ of $M|X$.  
If $u,v$ are the endpoints of $l$ in $G$, and $V(l) \cap V(E-l) = \{u\}$, then we call $v$ the \emph{pendant vertex} of $l$. 

We now describe a local modification that changes the number of loops in a representation having a pendant line. 
Let $l$ be a pendant line in a simple frame matroid $M$, let $G$ be a graph for $M$, and let $u,v$ be the endpoints of $l$ in $G$, where $v$ is the pendant vertex of $l$. 
It is easy to check that if $e \in l$ is a loop incident to $v$, then replacing this loop with a $u$-$v$ edge representing $e$ yields another graph for $M$.  
Conversely, if $G[l]$ has no loop incident to $v$, and $e$ is a $u$-$v$ link, then replacing this link with a loop incident to $v$ representing $e$ yields another graph for $M$.  
We refer to this second operation as a \emph{pendant roll-up}.  
Observe that if $l$ contains $k$ elements, then by successively replacing loops with links while applying pendant roll-ups, we obtain $k+1$ different graphs for $M$. 
Thus a frame matroid with a pendant line may have arbitrarily many biased graph representations. 
The case of a pendant line is quite tame; there are also highly connected frame matroids with arbitrarily many biased graph representations, even when fixing rank 
(see \cite[Section 1.3]{MR3588716} for details).  

To deal with this issue, among all frame graphs for $M$, we choose as \emph{canonical} those that have the least number of loops.  
The following lemma is the key to our proof of Theorem \ref{thm:finitenumberexminfixedrankFRAME}.  
If $G$ is a graph for a simple frame matroid $M$, and $W$ consists of a spanning tree of each component of a fixing graph of $G$, put $Z(W) = \{ e : e \in l$ and $e_l \in W\}$.  
Call $G[Z(W)]$ a \emph{fixing tree} of $G$.  

\begin{thm} \label{lem:unique_rep_in_span_of_fixing_set}
Let $M$ be a simple connected frame matroid, and let $X \subseteq E(M)$ be a component of the fixing set of $M$.  
Then $M|\cl(X)$ has a unique canonical graph $J$.  
Moreover, if $G$ is a graph for $M$, then $G[\cl(X)]$ is obtained from $J$ by pendant roll-ups in a fixing tree of $J$.  
\end{thm}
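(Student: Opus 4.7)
The plan is to use the uniqueness of the fixing graph (Lemma \ref{lem:fixing_graphs_unique}) to pin down the common skeleton of every graph for $M|\cl(X)$, then to show that the only remaining freedom in how elements are placed is the pendant roll-up. The canonical graph $J$ is then the unique graph with every optional loop replaced by a link.

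First I would apply Lemma \ref{lem:fixing_graphs_unique} to $M|\cl(X)$ to obtain its unique fixing graph $W$; hence in every graph $G$ for $M|\cl(X)$ the vertex set is $V(W)$, and each long line $l$ has, as its endpoints in $G$, the endpoints of $e_l$ in $W$. I would then classify each $e \in \cl(X)$ by its possible positions in a graph for $M|\cl(X)$:
\begin{enumerate}[label=(\alph*)]
\item if $e$ lies in two or more long lines, $e$ must be a loop at their shared endpoint (by the preliminaries);
\item if $e$ lies in a unique long line $l$ that is not pendant in $M|\cl(X)$, $e$ must be a link between the endpoints of $l$;
\item if $e$ lies in a unique long line $l$ that is pendant with pendant vertex $v$, $e$ may be either a link or a loop at $v$;
\item elements in $\cl(X) \setminus X$ are treated by a similar analysis, with the canonical (loop-minimizing) choice pinning down a unique placement.
\end{enumerate}
The crux is case (b): suppose some graph $H$ for $M|\cl(X)$ places $e \in l$ (with $l$ non-pendant) as a loop at an endpoint $u$ of $l$. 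Non-pendancy forces a second long line $l'$ incident to $u$ in $W$ (otherwise the edges at $u$ outside $l$ would all be loops, producing a $2$-separation $(l, \cl(X)-l)$). For any two elements $f,g \in l'$, the set $\{e,f,g\}$ is independent in $M$ because $l \cup l'$ is a rank-$3$ flat; but in $H$ the loop $e$ at $u$ together with the unbalanced $2$-cycle $\{f,g\}$ at $u$ forms a tight handcuff, hence a circuit of the frame matroid of $H$---contradiction.

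Having established this classification, I would define $J$ on $V(W)$ by placing each type-(a) and type-(d) element in its forced position and each type-(b) and type-(c) element as a link between the endpoints of its line. To verify $J$ represents $M|\cl(X)$ I would take any graph $H$ for $M|\cl(X)$ (which exists since $M|\cl(X)$ is a frame minor of $M$) and undo each pendant roll-up in $H$; by the classification the result is exactly $J$. Loops of types (a) and (d) appear in every graph, while type-(c) loops are the only optional loops, so $J$ minimizes the number of loops; any canonical graph must agree with $J$ on every forced placement and avoid every optional loop, so the canonical graph is unique and equals $J$. Finally, for any graph $G$ for $M$, $G[\cl(X)]$ is a graph for $M|\cl(X)$ that agrees with $J$ on forced placements and can differ only on type-(c) elements, each discrepancy being a pendant roll-up at the corresponding pendant line; since pendant lines of $M|\cl(X)$ correspond to pendant edges of $W$, and pendant edges lie in every spanning tree of $W$, these roll-ups occur at edges of any fixing tree of $J$.

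The main obstacle is case (b): the key structural input is to use non-pendancy of $l$ to guarantee a second long line at $u$, after which the handcuff-vs-independence contradiction closes the argument. Case (d) needs a similar but separate check, while the rest of the proof is essentially bookkeeping once the classification is in place.
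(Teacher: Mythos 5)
Your classification breaks at case (b), and the error is not just in the argument but in the statement. Take $M$ with graph on vertices $u,v,w$ consisting of a long line $l$ of six links between $u$ and $v$, a long line $l'$ of six links between $u$ and $w$, and one further element $f$ that is a $v$-$w$ link, with every cycle unbalanced. Here $X=l\cup l'$ is a single component of the fixing set and $\cl(X)=E(M)$. The line $l$ is \emph{not} pendant in $M|\cl(X)$, because $l'\cup\{f\}$ already spans (so $(l,E-l)$ has connectivity $2$). Nevertheless, if $e\in l$ and we replace the link $e$ by a loop at $v$, the resulting graph (with all cycles unbalanced) still represents $M$: both matroids are simple of rank $3$, and one checks they have exactly the same $3$-element circuits (the triples inside $l$ and inside $l'$), hence the same $4$-element circuits. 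So an element lying in a unique long line that is non-pendant in $M|\cl(X)$ \emph{can} be a loop in a valid representation. The justification you give for (b) fails at exactly this point: non-pendancy of $l$ in $M|\cl(X)$ does not force a second long line at the endpoint carrying the loop, because the $2$-separation $(l,\cl(X)-l)$ can be destroyed by links of $\cl(X)\setminus X$ (or short parallel classes) attached to that endpoint, and these are links, not loops, contrary to your parenthetical claim. Consequently your identification of the ``optional loops'' is wrong, and the ``moreover'' statement you derive (that $G[\cl(X)]$ can differ from $J$ only on elements of lines pendant in $M|\cl(X)$) is false in this example, whereas the theorem's actual statement survives because $l$ \emph{is} pendant in the restriction to the fixing tree.

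The fix is that pendancy has to be measured not in $M|\cl(X)$ but in $M|Z$, where $Z=Z(W)$ comes from a spanning tree $W$ of the fixing graph: the lines admitting roll-ups are precisely the leaf lines of that tree of lines, which is a strictly larger set than the lines pendant in $M|\cl(X)$. This is why the paper does not argue line-by-line from pendancy in $M|\cl(X)$, but instead builds the subgraph $S$ from two links of each line of $W$, and uses the intersections $P_{e'}$ of fundamental circuits with respect to the bases $T_e$ to force the placement of every element not in $S$; that machinery is also what settles your case (d), which you currently assert without proof (elements of $\cl(X)\setminus X$, and elements of long lines whose fixing-graph edge is a chord of $W$, must be shown to be links with determined endpoints). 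As written, the proposal's central dichotomy (b)/(c) is false, so the uniqueness and the ``moreover'' clause are not established.
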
 

We use the following result of Whitney. 
The \emph{line graph} of a graph $G$ is the graph whose vertices are the edges of $G$, in which two vertices $e, e'$ are adjacent if $e$ and $e'$ are incident to a common vertex in $G$. 

\begin{thm}[\cite{MR0256911}, Theorem 8.2] \label{thm:WhitneyLineGraphs}
Let $G$ and $G'$ be connected graphs with isomorphic line graphs.  
Then $G$ and $G'$ are isomorphic unless one is $K_3$ and the other is $K_{1,3}$.  
\end{thm}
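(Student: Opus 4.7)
The plan is to reconstruct $G$ from $L(G)$ up to isomorphism, with $L(G) \cong K_3$ being the sole obstruction. The key tool is the \emph{Krausz partition} of $L(G)$ associated to $G$: for each vertex $v \in V(G)$ of positive degree, let $C_v = \{e \in E(G) : v \in e\} \subseteq V(L(G))$. Each $C_v$ is a clique in $L(G)$, and the family $\Cc(G) = \{C_v : \deg_G v \geq 1\}$ partitions $E(L(G))$ into cliques with every vertex of $L(G)$ lying in exactly two parts (corresponding to its two endpoints in $G$). From any such clique-edge partition $\Pp$ of a graph $H$ with every vertex in at most two parts, one reconstructs a graph $G(H,\Pp)$ whose vertex set is $\Pp$, whose edge set is $V(H)$, and whose incidence is containment; one verifies $L(G(H,\Pp)) \cong H$ and in particular $G(L(G),\Cc(G)) \cong G$. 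Hence the theorem reduces to showing that, when $L(G) \neq K_3$, every Krausz partition of $L(G)$ reconstructs a graph isomorphic to $G$.

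I would first dispatch the exceptional case by direct calculation: $L(K_3) = L(K_{1,3}) = K_3$, and the graph $K_3$ admits two non-isomorphic Krausz partitions, namely the three two-element cliques coming from $K_3$ itself versus the single triangle-clique together with three singleton cliques coming from $K_{1,3}$. For the main case $L(G) \neq K_3$, I would classify the triangles of $L(G)$ into \emph{vertex-triangles} (three edges of $G$ meeting at a common vertex) and \emph{triangle-triangles} (the three edges of a triangle of $G$); in a simple graph these classes are disjoint. The vertex cliques of $\Cc(G)$ of size $\geq 3$ are then exactly the maximal cliques of $L(G)$ whose triangles are all vertex-triangles, and these two triangle types can be distinguished intrinsically in $L(G)$ by a common-neighbourhood test, up to the local symmetry inside $K_4$-subgraphs of $G$ where the three vertex-cliques around a triangle may be exchanged with the triangle-clique itself (a \emph{Whitney flip}).

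The principal obstacle is to show that every Whitney flip preserves the isomorphism class of the reconstructed graph --- the prototype being $L(K_4)$, whose two Krausz partitions both reconstruct $K_4$ --- and that every Krausz partition of $L(G)$ differs from $\Cc(G)$ by a sequence of such flips. I would handle this by induction on the number of triangles of $G$, together with a small finite case analysis at the degenerate configurations, using $L(G) \neq K_3$ to rule out the single problematic flip that would swap $K_{1,3}$ with $K_3$. Once this is established, $L(G) \cong L(G')$ forces the reconstructions from $\Cc(G)$ and $\Cc(G')$ to be isomorphic, whence $G \cong G'$.
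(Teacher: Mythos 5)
The paper offers no proof of this statement to compare yours against: it is Whitney's classical theorem, quoted from Harary's \emph{Graph Theory} (Theorem 8.2 there) and used as a black box in the proof of Lemma~\ref{lem:fixing_graphs_unique}. So the only question is whether your argument stands on its own.

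Your outline follows the standard Krausz-partition route and the framework is sound: the reconstruction $G(H,\Pp)$ from a clique-edge partition, the identification of $G(L(G),\Cc(G))$ with $G$, the reduction to uniqueness of the Krausz partition, and the explicit treatment of the $K_3$/$K_{1,3}$ ambiguity are all correct. But there is a genuine gap at exactly the point you flag as the principal obstacle. The assertion that every Krausz partition of a connected line graph other than $K_3$ differs from $\Cc(G)$ by Whitney flips supported on $K_4$-subgraphs, and that each flip preserves the isomorphism type of the reconstruction, \emph{is} the theorem; ``induction on the number of triangles of $G$, together with a small finite case analysis at the degenerate configurations'' is a placeholder, not an argument. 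In particular, the common-neighbourhood test separating vertex-triangles from triangle-triangles is never stated, and its failure set is never determined --- it fails not only inside $L(K_4)$ but one must check configurations such as $K_4$ minus an edge, a triangle with a pendant edge, and the small graphs where maximal cliques of $L(G)$ do not line up with stars of $G$. Until that classification is carried out, the claim that any two Krausz partitions reconstruct isomorphic graphs is unsupported, and with it the conclusion $G \cong G'$. A secondary point: you define a Krausz partition by requiring each vertex of $H$ to lie in \emph{at most} two parts, but the containment reconstruction needs \emph{exactly} two (pendant edges of $G$ contribute singleton cliques $C_v$ that must be retained, and one must also record how many singleton parts to adjoin when a vertex lies in only one non-trivial clique); as written, $G(H,\Pp)$ is not a well-defined graph for all partitions you allow.
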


\begin{lem} \label{lem:fixing_graphs_unique} 
Let $M$ be a frame matroid.  
Then $M$ has a unique fixing graph.  
\end{lem}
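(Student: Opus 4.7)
The plan is to show that both the vertex count and the edge-incidence pattern of the fixing graph are matroid invariants, then use Whitney's theorem to conclude that the graph itself is determined up to label-preserving isomorphism. Let $G, G'$ be frame graphs for $M$ with fixing graphs $W = W(G)$ and $W' = W(G')$; both have the same number of vertices (equal to $r(M)$ in the case of interest, where $M$ is connected and non-graphic), and each edge of $W$ (respectively, $W'$) is canonically labelled by a long line of $M$, giving a natural bijection on edge sets. The key claim is that, for distinct long lines $l, l'$ of $M$, the endpoint pairs of $l$ and $l'$ share a vertex in $G$ if and only if $r(l \cup l') = 3$. I would prove this using the frame rank formula $r(Y) = |V(Y)| - b(Y)$: since every long line is unbalanced, each component of $G[l \cup l']$ is unbalanced, so $b(l \cup l') = 0$ and $r(l \cup l') = |V(l \cup l')|$, which equals $3$ when the endpoint pairs share a vertex and $4$ otherwise (two distinct long lines cannot share both endpoints, as $l = E(G[\{u,v\}])$ for its endpoints $u,v$). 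Consequently the line graph $L(W)$ depends only on $M$, so $L(W) = L(W')$, and the component partitions of $W$ and $W'$ agree.

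Applying Whitney's Theorem \ref{thm:WhitneyLineGraphs} componentwise then produces, for each non-trivial component, a label-preserving isomorphism with the corresponding component of $W'$, unless one component is $K_3$ and the other is $K_{1,3}$. This Whitney exception is the main obstacle, and I would rule it out by a second rank computation: three pairwise-adjacent long lines $l_1, l_2, l_3$ realised as a triangle $K_3$ span three vertices and give $r(l_1 \cup l_2 \cup l_3) = 3$, while the same three long lines realised as $K_{1,3}$ span four vertices and give rank $4$. Since $r(l_1 \cup l_2 \cup l_3)$ is a matroid invariant, only one of the two configurations is compatible with $M$, so the Whitney exception never arises across two representations of the same matroid. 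Finally, the number of isolated vertices in $W$ equals $r(M) - r(X(M))$, again a matroid invariant (every component of $G[X(M)]$ being unbalanced), matching in $W'$. Assembling these ingredients yields an isomorphism $W \to W'$ that matches each edge to the one with the same long-line label, proving that $M$ has a unique fixing graph.
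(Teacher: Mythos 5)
Your proposal is correct and follows essentially the same route as the paper: adjacency of long lines is a matroid invariant (union of rank $3$), so the line graph of the fixing graph is determined by $M$, and Whitney's theorem finishes the job once the $K_3$/$K_{1,3}$ exception is excluded. Your exclusion of that exception via the local computation $r(l_1\cup l_2\cup l_3)\in\{3,4\}$ is a minor variant of the paper's global observation that $\rank(M)=|V(G)|=|V(G')|$, and your explicit handling of the adjacency claim and of isolated vertices merely fills in details the paper treats as observations.
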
 

\begin{proof} 
If $M$ has no long line, then every graph for $M$ has empty fixing graph, so the lemma holds. 
Otherwise, the fixing graph of a graph for $M$ is the disjoint union of the fixing graphs of the components induced by the components of the fixing set of $M$.  
Hence we may assume that $M$ is spanned by its fixing set $X$, and that $X$ has just one component. 
Let $G$ and $G'$ be two graphs for $M$.  
Since $M|X$ is connected, $G[X]$ and $G'[X]$ are connected subgraphs of $G$ and $G'$, respectively, and so their respective fixing graphs are connected.   
The adjacencies of edges in the fixing graphs of $G$ and $G'$ are determined by the adjacencies of the long lines of $M$.  
Since $\rank(M) = |V(G)| = |V(G')|$, it cannot occur that the fixing graph of one $G$ or $G'$ is $K_3$ while the other is $K_{1,3}$.  
Hence by Theorem \ref{thm:WhitneyLineGraphs}, $G$ and $G'$ have the same fixing graph.  
\end{proof} 

Hence we may unambiguously speak of the fixing graph of a frame matroid.  

We prove Lemma \ref{lem:unique_rep_in_span_of_fixing_set} by choosing a special spanning subgraph $S$ of a canonical graph for $M|\cl(X)$, where $S$ contains many bases of $M|\cl(X)$.  
We then construct a biased graph $(H,\Ss)$ by considering fundamental circuits of elements with respect to these bases.  
This biased graph does not necessarily represent $M|\cl(X)$, but is not too far off.  
Local modifications, in the form of pendant roll-ups, are all that may be required to fix this.  
More precisely, if $l$ is a pendant line of $M|\cl(X)$, and $e' \notin \cl(E-l)$, then, while $e'$ may be represented as a loop or a link in a biased graph representation of $M|\cl(X)$, $e'$ is always represented by a link in a canonical representation; in this case our procedure for constructing $(H,\Ss)$ from $S$ places $e'$ as a link, as required.  
If $e'$ is otherwise an element that may be represented as either a loop or a link with respect to a spanning tree of the fixing graph, then our procedure always represents $e'$ as a link in $H$, which may require repair.  

\begin{proof}[Proof of Lemma \ref{lem:unique_rep_in_span_of_fixing_set}]
Write $N = M|\cl(X)$.  
Let $\GB$ and $\GBp$ be two canonical representations of $N$.  
If $N$ has rank 2, then there is just one canonical representation of $N$, consisting of a pair of vertices and $|E(N)|$ links between them, with all cycles unbalanced.  
So assume $\rank(N)>2$.  
By the definition of a component of fixing set, $M|X$, and so $N$, is connected.  
Hence the fixing graph of $N$ is connected.  
By Lemma \ref{lem:fixing_graphs_unique}, the fixing graphs of $G$ and $G'$ are equal.  

Each long line has at least six points, so for each edge $e_l =uv$ of the fixing graph, there are at least two elements of its corresponding line $l$ that are links with endpoints $u, v$ in both $G$ and $G'$.  
Choose a spanning tree $W$ of the fixing graph of $N$.  
Let $S$ be the biased subgraph of $\GB$ and $\GBp$ obtained by replacing each edge $e_l$ of $W$ with two elements of its corresponding long line $l$ that are links in both $G$ and $G'$.  
Evidently $S$ is a biased subgraph of both $\GB$ and $\GBp$.  
Let $T$ be a spanning tree of $S$.  
For each edge $e \in S - T$, put $T_e = T \cup e$.  
For each such edge $e$, $T_e$ is a basis of $N$.  

Now for each element $e' \in E(N) - S$, define  
\[
P_{e'} = \bigcap_{e \in S-T} C(T_e, e') - e'
\]
where $C(T_e, e')$ denotes the fundamental circuit in $N$ containing $e'$ with respect to the basis $T_e$.  
Each fundamental circuit $C(T_e, e')$ defines a subgraph of $\GB$.  
Thus for each element $e' \in E(N)-S$, $P_{e'}$ is a subgraph of $S \subseteq \GB$.  

\begin{claim} 
$P_{e'}$ is either a single vertex, a single edge of $T$, or the path in $T$ linking the distinct endpoints of $e'$ in $G$.  
\end{claim}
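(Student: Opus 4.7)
The plan is to compute each fundamental circuit $C(T_e,e')$ by reading off the cycles of $T_e\cup\{e'\}$ in $G$ and applying the classification of circuits of the frame matroid $N$. The underlying simple graph of $S$ is $W$, and so is that of the spanning tree $T$. For each $e\in S-T$ let $\hat{e}\in T$ denote the parallel partner of $e$; the digon $\{e,\hat{e}\}$ is unbalanced (since every cycle in a long line of a frame matroid is unbalanced), so the unique cycle of $T_e$ is unbalanced, confirming that $T_e$ is a basis of $N$.

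Suppose first that $e'=uv$ is a link in $G$, and let $P$ be the unique $u$-$v$ path in $T$. The cycles of $T_e\cup\{e'\}$ are the digon $\{e,\hat{e}\}$ and $D:=P\cup\{e'\}$, together with a third cycle completing a theta when $\hat{e}\in P$. If $D$ is balanced it is already a circuit and $C(T_e,e')=D$ for every $e$, so $P_{e'}=P$. Otherwise $D$ is unbalanced, and the classification of frame-matroid circuits, together with the theta property (which forces the third cycle of the theta to also be unbalanced), gives three possibilities: $C(T_e,e')$ is a theta when $\hat{e}\in P$, tight handcuffs when exactly one vertex of the digon lies in $V(P)$, or loose handcuffs joined to $P$ by a minimal $T$-path $Q_e$ when the digon is vertex-disjoint from $V(P)$. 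In every subcase $C(T_e,e')-e'\supseteq P$, so $P\subseteq P_{e'}$.

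For the reverse inclusion, let $f\in T-P$. Removing $f$ from $T$ yields two components, and since $f\notin P$ the path $P$ is contained in one of them, call it $T_1$. Because $|V(T_1)|\geq|V(P)|\geq 2$, the subtree $T_1$ contains at least one edge $g$. Let $e$ be the partner of $g$ in $S-T$; then $\hat{e}=g\in T_1$, the digon of $e$ has both vertices in $V(T_1)$, and any minimal $T$-path $Q_e$ to $V(P)$ also remains in $T_1$. Hence $f\notin C(T_e,e')-e'$, so $f\notin P_{e'}$. Since no element of $E(N)-T$ can belong to more than one $C(T_e,e')-e'$ (it would have to coincide with the varying $e$), we conclude $P_{e'}=P$, which is either a single edge of $T$ (when $u,v$ are adjacent in $T$) or the path in $T$ linking $u$ to $v$.

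In the remaining case $e'$ is an unbalanced loop at a vertex $w$, and $C(T_e,e')-e'$ equals $\{e,\hat{e}\}$ in the tight-handcuffs subcase (when $w$ is an endpoint of $\hat{e}$) or $\{e,\hat{e}\}\cup Q_e$ in the loose-handcuffs subcase (with $Q_e$ the minimal $T$-path from $w$ to the digon of $e$). If $w$ is not a leaf of $T$, then for any $f\in T$ the component of $T-f$ containing $w$ has an edge $g\neq f$ (as $w$ has at least two incident $T$-edges), and choosing $e$ to be the partner of $g$ confines $C(T_e,e')-e'$ to that component, so $f\notin P_{e'}$; hence $P_{e'}=\{w\}$ as a subgraph. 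If instead $w$ is a leaf of $T$ with unique incident $T$-edge $f$, then every $Q_e$ begins at $w$ along $f$ (and in the tight case $\hat{e}=f$), so $f\in P_{e'}$; but taking $e$ to be the partner of $f$ gives $C(T_e,e')-e'=\{e,f\}$, excluding every other $T$-edge, so $P_{e'}=\{f\}$. The principal technical obstacle is this case analysis and the uniform verification that in loose-handcuffs subcases the minimal $T$-path $Q_e$ lies in the intended component of $T-f$; this is handled by selecting the partner $g$ inside that component.
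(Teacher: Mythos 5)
Your overall route is the same as the paper's (classify each fundamental circuit $C(T_e,e')$ by the shape of $T_e\cup\{e'\}$ in the biased graph, show the tree path $P$ between the endpoints of $e'$ lies in every fundamental circuit, exclude edges outside $P$, and treat loops by a handcuff analysis), and your reverse-inclusion and loop arguments are fine. The genuine gap is in the forward inclusion, in the subcase $\hat{e}\in P$ with $D=P\cup\{e'\}$ unbalanced: you assert that ``the theta property forces the third cycle of the theta to also be unbalanced.'' That is not what the theta property says. It propagates balance (two balanced cycles of a theta force the third to be balanced); it does not propagate imbalance. So it is perfectly possible that the digon $\{e,\hat{e}\}$ and $D$ are unbalanced while the third cycle $D'=(P-\hat{e})\cup\{e,e'\}$ is balanced. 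In that situation the unique circuit of $N$ in $T_e\cup\{e'\}$ is the balanced cycle $D'$, not the theta, and $C(T_e,e')-e'$ omits $\hat{e}$; hence $P\not\subseteq C(T_e,e')-e'$ and your conclusion $P_{e'}=P$ is unsupported. This subcase is not vacuous. For instance, take three vertices $v_1,v_2,v_3$ and a $\ZZ$-gain graph whose two long lines are the parallel classes on $v_1v_2$ and $v_2v_3$ with gains $0,\dots,5$, and let $e'$ be a single $v_1$-$v_3$ link of gain $1$. Choosing for $S$ the gain-$0$ and gain-$1$ edges of each class and for $T$ the two gain-$0$ edges, both fundamental circuits of $e'$ are balanced triangles ($\{a_2,b_1,e'\}$ and $\{a_1,b_2,e'\}$ in the obvious notation), and their intersection minus $e'$ contains no edge of $P$ at all.

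For comparison: the paper's own proof of this claim takes the same path and is also terse exactly here --- its case (1) (``balanced cycle'') is treated as if the balanced cycle avoided $e$, and it then asserts without further argument that $P$ is contained in every fundamental circuit --- so you have not omitted an idea that the paper supplies. But your write-up replaces that silence with an explicit justification (the theta property) that is false as stated, and repairing the argument in this subcase (for example by a more careful choice of the representatives in $S$ or of the tree $T$, or by intersecting only over fundamental circuits of a suitable subfamily of the bases $T_e$) requires an idea that neither your parenthetical nor the rest of your proof provides. As it stands, this step would fail.
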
 

\begin{proof}[Proof of Claim] 
\renewcommand\qedsymbol{$\triangle$}
Fix $e' \in E(N)-S$. 
Since $\rank(N)>2$, $|V(S)|>2$ and $|S-T|>1$. 
Hence there are at least two fundamental circuits appearing in the intersection defining $P_{e'}$, and so $P_{e'} \subseteq T$.  
Since for each $e \in S-T$, $T_e$ is a spanning tree along with one additional edge in an unbalanced 2-cycle, each fundamental circuit $C(T_e, e')$ has in $\GB$ the form of one of: (1) a balanced cycle, (2) a theta subgraph, or (3) a pair of handcuffs, where in the latter two cases all cycles are unbalanced and one cycle is the unbalanced 2-cycle of $T_e$.  
Hence for each $e \in S-T$, $C(T_e, e')-\{e,e'\}$ is in cases (1) and (2) a path contained in $T$ linking the endpoints $x, y$ of $e'$, while in case (3), it is a path $P$ in $T$ linking $x$ and $y$, together with the path in $T$ linking $P$ and the unbalanced 2-cycle of $T_e$.  

Put $Z=Z(W) = \{ e : e \in l$ and $e_l \in W\}$.  
Consider the form of $C(T_e,e')$ in $\GB$.  
If $e' \in Z$, then for all $e \in S-T$ with $|C(T_e,e')| \geq 4$, $C(T_e,e')$ is a pair of handcuffs in $\GB$; if $|C(T_e,e')|=3$, then $e$ is contained in the same line as $e'$  and $C(T_e,e')$ is either a tight handcuff in which $e'$ is a loop or a theta consisting of three edges.  
If $e' \notin Z$, then in $\GB$ edge $e'$ has distinct endpoints $x, y$ that are not adjacent in $S$ (because $M$ is simple and long lines are flats, any edge of $\GB$ that is a loop or shares endpoints with an edge in $S$ is in $Z$).  
Thus if $e' \notin Z$, then for every $e \in S-T$, $C(T_e,e')$ contains the $x$-$y$ path $P \subseteq T$; $P$ has length at least two, and for each edge $s \in S-T$ that is in parallel to an edge in $P$, $C(T_s,e')$ is either a theta with at least 4 edges or a balanced cycle.  
Therefore $e' \in Z$ if and only if every fundamental circuit $C(T_e,e')$ is a handcuff or theta consisting of three edges in $\GB$.  

Moreover, if $e' \in Z$, then for each long line $l$ of $W$ containing $e'$ there are edges $t_l \in T$ and $s_l \in S-T$ such that $\{e',s_l,t_l\}$ is a fundamental circuit $C(T_{s_l},e')$ of size 3.  
Thus if $e' \in Z$, $P_{e'}$ is either a single edge (if $e'$ is contained in just one long line of $W$) or trivial (if $e'$ is contained in more than one long line of $W$).  

Assume now $e' \notin Z$.  
As above, let $x, y$ be the distinct endpoints of $e'$ in $G$, and let $P$ be the $x$-$y$ path in $T$.  
Since $P$ has length at least two, there are at least two edges $e \in S-T$ such that $C(T_e,e')$ is either a theta with at least 4 edges or a balanced cycle.  
Since in the intersection $\bigcap_{e \in S-T} C(T_e,e')$ defining $P_{e'}$, $P$ is contained in every fundamental circuit, and there are at least two fundamental circuits in the intersection, $P_{e'} =P$.  
\end{proof} 

Now define a graph $H$ by adding each element $e' \in E(N)-S$ to $S$ as follows.  
\begin{itemize} 
\item[(1)]  If $P_{e'}$ consists of a single vertex $v$, then $e'$ is a loop incident to $v$. 
\item[(2)]  If $P_{e'}$ is a $u$-$v$ path in $T$, where $u \not= v$, then $e'$ is a $u$-$v$ edge. 
\end{itemize} 
Let $\Ss$ be the collection of cycles of $H$ that are circuits of $N$.  

Put $Z = Z(W) = \{ e : e \in l$ and $e_l \in W\}$.  

\begin{claim} 
$H$ agrees with $G$ up to pendant roll-ups applied in the fixing tree $H[Z]$.  
\end{claim}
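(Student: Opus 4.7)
The plan is to compare $H$ and $G$ element by element over $E(N)$, using the four possibilities for $P_{e'}$ given by the previous Claim. Elements of $S$, elements of $E(N)-S$ that lie outside $Z$, and elements of $Z$ forced by multiple long lines of $W$ will match $G$ on the nose; the only possible discrepancies occur for certain $e' \in Z$ that $H$ records as links while $G$ records as loops, and these will be repaired by pendant roll-ups inside the fixing tree $H[Z]$.

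For $e \in S$ the placement is inherited from $S$, so $H(e)=G(e)$. For $e' \in E(N) - S$ with $e' \notin Z$, the previous Claim shows $P_{e'}$ is the $x$-$y$ path in $T$ joining the endpoints of $e'$ in $G$, and $H$ adds $e'$ as the same $x$-$y$ link. For $e' \in Z$ contained in more than one long line of $W$, $P_{e'}$ is a single vertex, namely the shared endpoint $v$ of those lines; so $H$ places $e'$ as a loop at $v$, and so does $G$, because any element lying in two adjacent long lines must be a loop at their common endpoint in every graph representation.

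The delicate case is $e' \in Z$ contained in exactly one long line $l \in W$, where $P_{e'} = \{t_l\}$ and $H$ therefore places $e'$ as a $u$-$v$ link for the endpoints $u,v$ of $l$. If $l$ is also the only long line of $M$ containing $e'$, then the canonical (loop-minimal) $G$ also places $e'$ as this link, and we are done. Otherwise $e'$ lies in another long line $l' \notin W$, and $G$ must record $e'$ as a loop at the common vertex $v$ of $l$ and $l'$. I resolve this by showing that $v$ has $W$-degree one: a loop at $v$ belongs to every long line of $M$ through $v$, so if only one long line of $W$ contains $e'$ then only one long line of $W$ is incident to $v$. Hence $e_l$ is a leaf of $W$, $l$ is a pendant line in $H[Z]$ with pendant vertex $v$, and $H[Z]$ contains no loop at $v$ (loops in $H$ arise only at vertices where at least two long lines of $W$ meet). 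A single pendant roll-up inside $H[Z]$ converts the $u$-$v$ link $e'$ into a loop at $v$ and restores agreement with $G$.

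Performing such a roll-up for each element in the delicate case transforms $H$ into $G[\cl(X)]$, with all modifications confined to $H[Z]$. The main obstacle I anticipate is precisely this delicate case: one must argue that whenever an element of $Z$ lies in several long lines of $M$ but only one of $W$, its forced-loop vertex is guaranteed to coincide with a leaf of $W$, so that a genuine pendant roll-up in $H[Z]$ is actually available. Once that structural correspondence between the multiplicity of long lines of $M$ containing $e'$ and the $W$-degree of its loop-vertex is secured, the remaining cases follow routinely from the previous Claim.
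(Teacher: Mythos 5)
Your overall architecture (compare $H$ and $G$ element by element, confine discrepancies to $Z$, absorb them by pendant roll-ups) matches the paper, but your ``delicate case'' splits on the wrong dichotomy and this creates a genuine gap. In your subcase (a) --- $e'$ lies in exactly one long line $l$ of $W$ and in no other long line of $M$ --- you assert that loop-minimality of the canonical $G$ forces $e'$ to be a $u$-$v$ link. That is not true: an element of a single long line can be \emph{forced} to be a loop in every representation of $N=M|\cl(X)$, hence also in the canonical one, by circuits involving elements of $\cl(X)$ that lie outside the fixing set (or in long lines not chosen for $W$). For instance, if $a,b$ are two $u$-$w$ links forming an unbalanced $2$-cycle (only a short line, so not in $X$) and $\{e',a,b\}$ is a circuit of $N$, then $e'$ must be a loop at $u$ in every graph for $N$, even though $l$ is the only long line containing $e'$. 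Canonicality minimizes the total number of loops; it does not guarantee that an element in just one long line is a link. This is exactly why the paper's proof distinguishes instead whether $l$ is a \emph{pendant} line of $N|Z$: in the non-pendant case a loop would lie in a second line of $N|Z$, which is impossible, so $H$ and $G$ agree on the nose; in the pendant case $G$ may legitimately carry $e'$ as a loop at the pendant vertex, and that is precisely the discrepancy the pendant roll-up is meant to absorb. As written, your case (a) skips this possibility, so you have not shown that every disagreement between $H$ and $G$ is localized at pendant lines of $H[Z]$.

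The repair is the same degree argument you already use in your case (b): if $G$ places $e'$ as a loop at an endpoint of $l$, that endpoint can meet no other long line (else $e'$ would lie in two long lines, contrary to the hypothesis of case (a)), so it has $W$-degree one, $l$ is pendant in $H[Z]$ with that vertex as pendant vertex, and a single pendant roll-up restores agreement. With that added, your argument aligns with the paper's; your case (b) and your treatment of $S$, of elements outside $Z$, and of elements in two or more $W$-lines are correct (the paper re-derives the exclusion of short $P_{e'}$ for $e'\notin Z$ inside this claim, whereas you quote it from the first Claim's proof, which is fine).
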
 

\begin{proof}[Proof of claim] 
\renewcommand\qedsymbol{$\triangle$}
Clearly $H$ agrees with $G$ on $S$.  
Consider an element $e' \in Z - S$.  
\begin{itemize} 
\item If $e'$ is contained in more than one long line of $N|Z$, then in $G$, $e'$ must be a loop incident to a non-pendant vertex $v$ at which all long lines containing $e'$ meet; in this case $P_{e'}$ consists of the single vertex $v$, and $e'$ is added to $S$ according to (1) above: $G$ and $H$ agree on $e'$.  
\item If $e'$ is contained in just one long line $l$ of $N|Z$, and $l$ is not a pendant line of $N|Z$, then in $G$, $e'$ must be an edge linking the endpoints of $l$ 
($l$ shares each of its endpoints with another line of $N|Z$; were $e'$ a loop, $e'$ would be contained in one of these lines); in this case $P_{e'}$ consists of a single link in $T$ between the endpoints of $l$, and $e'$ is added to $S$ according to (2) above: $G$ and $H$ agree on $e'$.  
\item If $e'$ is contained in just one long line $l$ of $N|Z$, and $l$ is a pendant line of $N|Z$, then in $G$, $e'$ may be either a edge between the endpoints of $l$ or a loop incident to the pendant vertex of $l$ in $G[Z]$; in this case $P_{e'}$ consists of a single link in $T$ between the endpoints of $l$, and $e'$ is added to $S$ according to (2) above: $H$ and $G$ agree on $e'$ up to a pendant roll-up in $H[Z]$.  
\end{itemize} 

Finally, consider elements $e' \notin Z$.  
If $P_{e'}$ consists of a single vertex $v$, this is because there is a pair of fundamental circuits $C, C'$ meeting only in $e'$.  
Because in $G$ each of these circuits contains an edge in $T$ incident to $v$, each of which is contained in a long line, this implies that $e'$ is a loop incident to $v$ in $G$.  
Let $l$ be the long line in $Z$ that has $v$ as an endpoint and contains an edge in $T \cap C$.  
Then $e' \in l \subseteq Z$, a contradiction.  
If $P_{e'}$ consists of a single edge, this is because there is a fundamental circuit $C(T_s,e')$ consisting of just three elements $\{s,t,e'\}$, where $s \in S-T$ and $t \in T$.  
But there is a long line $l \in W$ containing both $s$ and $t$, so this again yields the contradiction $e' \in l \subseteq Z$.  
Hence for all remaining elements $e' \notin Z$, $P_{e'}$ is a path in $T$ of length at least 2.  

Therefore each remaining element $e' \notin Z$ is added to $S$ by our procedure above as an edge linking a pair of vertices $x, y$ that are not adjacent in $T$.  
Let $e' \notin Z$.  
Since $(H,\Ss)$ and $\GB$ agree on $S$, and the endpoints of $e'$ are determined by the intersection of the fundamental circuits $C(T_e,e')$ of $M$ with respect to the collection of bases $T_e$, all of which are contained in $S$, $e'$ must have the same pair of endpoints in both $H$ and $G$.
\end{proof} 

Since $\GB$ and $\GBp$ are both determined by $S$ up to pendant roll-ups in $H[Z]$, and both are canonical representations of $N$, $\GB=\GBp$.  
This shows that $M|\cl(X)$ has a unique canonical biased graph representation.  

The same argument shows that if $G$ is a graph for $M$, and $H$ is constructed from a biased subgraph $S$ of $G$ in the same way as it is constructed above, then $S$ determines $G[\cl(X)]$ up to pendant roll-ups in the fixing tree $H[Z]$.  
Thus if $J$ is the canonical representation of $M|\cl(X)$, then applying pendant roll-ups to $J[Z]$ as necessary to place elements of long pendant lines of $M|Z$ in parallel with the frame for $M$ provided by $V(J)$, yields a biased graph $J' = G[\cl(X)]$.  
\end{proof} 

Lemma \ref{lem:unique_rep_in_span_of_fixing_set} enables us to bound the number of canonical graphs for $M$.  
Lemma \ref{lem:unique_canonical_biased_subgraph_reps} provides a more precise statement, which is more convenient to apply. 

\begin{lem} \label{lem:unique_canonical_biased_subgraph_reps}
Let $M$ be a frame matroid, and let $X$ be the fixing set of $M$.  
Let $G$ and $G'$ be canonical graphs for $M$ such that $V(G)=V(G')$ and $G[E-\cl(X)] = G'[E-\cl(X)]$. 
Then $G[\cl(X)] = G'[\cl(X)]$, and so $G = G'$. 
\end{lem}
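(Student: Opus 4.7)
\emph{Proof plan.} I would prove this lemma by reducing to Theorem~\ref{lem:unique_rep_in_span_of_fixing_set} applied componentwise and then using the common subgraph $G[E-\cl(X)] = G'[E-\cl(X)]$ together with Lemma~\ref{lem:fixing_graphs_unique} to align vertex labels in the ambient set $V(G) = V(G')$.

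Let $X_1, \ldots, X_k$ be the components of $X$. Since two long lines in distinct components are by definition non-adjacent, they share no vertex in any frame graph for $M$; hence the subgraphs $G[\cl(X_i)]$ have pairwise disjoint vertex sets apart from vertices shared with $V(E-\cl(X))$, and $\cl(X) = \bigcup_i \cl(X_i)$. The first step is to show that $G[\cl(X_i)]$ is canonical for $M|\cl(X_i)$. The total number of loops in $G$ splits as $\sum_i \loops(G[\cl(X_i)]) + \loops(G[E-\cl(X)])$, with the last summand fixed by hypothesis. Applying the inverse of a pendant roll-up (replacing a loop by a link) inside $G[\cl(X_i)]$ produces another graph for $M$ and alters only the loop count of $G[\cl(X_i)]$. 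Thus if $G[\cl(X_i)]$ were non-canonical for $M|\cl(X_i)$, a loop-reducing modification would lower the loop count of $G$, contradicting its canonicity. By Theorem~\ref{lem:unique_rep_in_span_of_fixing_set} applied to $X_i$, $G[\cl(X_i)]$ therefore coincides with the unique canonical representation $J_i$ of $M|\cl(X_i)$ as a biased graph, and likewise $G'[\cl(X_i)] = J_i$.

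To align vertex labels in $V(G) = V(G')$, I would use uniqueness of the fixing graph (Lemma~\ref{lem:fixing_graphs_unique}): any vertex of the fixing graph that is incident to some edge of $E-\cl(X)$ is pinned down by $G[E-\cl(X)] = G'[E-\cl(X)]$. For each long line $l$ with such an anchored endpoint, the other endpoint of $l$ is then forced to agree in $G$ and $G'$ by the unique fixing graph, and this agreement propagates along the connected fixing graph of each component. For a component $X_i$ whose vertices are entirely disjoint from $V(E-\cl(X))$, the ambient identification $V(G) = V(G')$ combined with the uniqueness of $J_i$ forces a consistent labeling. Once labels are aligned, the final paragraph of the proof of Theorem~\ref{lem:unique_rep_in_span_of_fixing_set} gives that $G[\cl(X_i)]$ and $G'[\cl(X_i)]$ are determined by a common biased subgraph $S$ up to pendant roll-ups in the fixing tree; since both are canonical, no pendant roll-up distinguishes them, so $G[\cl(X_i)] = G'[\cl(X_i)]$. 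Summing over $i$ and combining with $G[E-\cl(X)] = G'[E-\cl(X)]$ yields $G = G'$.

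The main obstacle is the label-alignment step for components whose vertex sets are internally disjoint from $V(E-\cl(X))$; here only the ambient constraint $V(G) = V(G')$ and the rigidity of the unique canonical $J_i$ can rule out nontrivial relabelings, and some care is needed to handle potential automorphisms of $J_i$ that permute purely internal vertices.
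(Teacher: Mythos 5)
There is a genuine gap, and it sits at the first main step. You claim that applying the inverse of a pendant roll-up inside $G[\cl(X_i)]$ ``produces another graph for $M$,'' and conclude that $G[\cl(X_i)]$ must be loop-minimal, hence equal to the unique canonical graph $J_i$ of Theorem~\ref{lem:unique_rep_in_span_of_fixing_set}. This is false, and it fails for precisely the reason the lemma is needed. A pendant line $l$ of $M|\cl(X_i)$ with endpoints $u,v$ and pendant vertex $v$ need not be a pendant line of $M$: in $G$ the vertex $v$ may carry edges of $E-\cl(X)$. If $D\subseteq E-\cl(X)$ is an unbalanced cycle through $v$ avoiding $u$ (say two parallel $v$--$z$ links forming an unbalanced $2$-cycle) and $e\in l$ is a loop at $v$, then $\{e\}\cup D$ is a tight handcuff, hence a circuit of $M$, so $e\in\cl(E-E_G(u))$. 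Since every circuit subgraph has minimum degree two, no circuit through a $u$--$v$ link avoids a second edge at $u$; therefore $e$ cannot be represented as a $u$--$v$ link in \emph{any} graph for $M$, even though it is a link in the canonical graph $J_i$ for $M|\cl(X_i)$. So $G[\cl(X_i)]$ can have strictly more loops than $J_i$ while $G$ is canonical for $M$: the roll-ups applied to $J_i$ are forced, not removable. For the same reason your later assertion that ``since both are canonical, no pendant roll-up distinguishes them'' does not follow: loop-minimality of $G$ and $G'$ alone does not prevent them from rolling up different elements of $l$.

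What the paper's proof actually establishes, and what your argument is missing, is that the forced roll-ups are determined by data the two graphs share: for each $e$ in a pendant line with endpoints $u,v$, exactly one of $e\in\cl(E-E_G(u))$, $e\in\cl(E-E_G(v))$, or neither holds, and these closures are computed in $M$ from the (unique, by Lemma~\ref{lem:fixing_graphs_unique}) fixing graph together with $G[E-\cl(X)]=G'[E-\cl(X)]$. That is the real role of the hypothesis on $E-\cl(X)$; in your write-up it is used only to align vertex labels. Two smaller points: the identity $\cl(X)=\bigcup_i\cl(X_i)$ is not automatic (an element can lie in the span of the union of two unbalanced components without lying in the span of either), so the componentwise reduction needs justification; and the label-alignment/automorphism worry you raise at the end, while legitimate, is secondary to the placement issue above.
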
 

\begin{proof} 
By Lemma \ref{lem:unique_rep_in_span_of_fixing_set}, $M|\cl(X)$ has a unique canonical graph $J$, and both $G[\cl(X)]$ and $G'[\cl(X)]$ are obtained from $J$ by pendant roll-ups in a fixing tree $Z$ of $J$.  
(So $G$ and $G'$ have the same fixing graph, and by assumption agree outside of $\cl(X)$.) 
For each of $G$ and $G'$, modify $J$ to produce a biased graph $J'(G)$, respectively, $J'(G')$, representing $M|\cl(X)$, as follows. 
For each link $e \in E(J)$ contained in a pendant long line $l$ of $Z$, do the following.  
Let $u$ and $v$ be the endpoints of $l$.  
Denote by $E_G(x)$ the set of edges of $G$ incident to the vertex $x$. 
By assumption, for every vertex $x$, $E_G(x) - \cl(X) = E_{G'}(x)-\cl(X)$. 
Each $e \in l$ satisfies precisely one of: 
\begin{itemize} 
\item  $e \in \cl(E-E_G(u))$, 
\item  $e \in \cl(E-E_G(v))$, or 
\item  $e \notin \cl(E-E_G(u)) \cup \cl(E-E_G(v))$. 
\end{itemize}
Moreover, $e \in \cl(E-E_G(u))$ if and only if $e \in \cl(E-E_{G'}(u))$, because $G[E-\cl(X)] = G'[E-\cl(X)]$, $G$ and $G'$ share a fixing graph, and both $G$ and $G'$ are graphs for $M$. 
If $e \in \cl(E-E_G(u))$, then replace $e$ with a loop incident to $v$. 
If $e \in \cl(E-E_G(v))$, then replace $e$ with a loop incident to $u$.  
Otherwise, leave $e$ as a $u$-$v$ link. 
Now it is clear that $M$ determines the placement of $e$ in $J'(G)$, respectively, $J'(G')$, and that the resulting biased graphs $J'(G)$ and $J'(G')$ represent $M|\cl(X)$. 
Evidently, $J'(G)=J'(G')$, and by Lemma \ref{lem:unique_rep_in_span_of_fixing_set}, $G[\cl(X)] = J'(G) = J'(G') = G'[\cl(X)]$.  
\end{proof} 

Using Lemma \ref{lem:unique_canonical_biased_subgraph_reps}, we obtain the following (rather crude) upper bound on the number of canonical representations for a simple connected frame matroid.  

\begin{lem} \label{lem:boundoncanonicalrepsFRAME} 
Let $M$ be a simple connected frame matroid of rank $r$.  
The number of canonical graphs for $M$ is less than 
$\left( {r \choose 2} + r \right)^{5 {r \choose 2}} < r^{5r^2}$.  
\end{lem}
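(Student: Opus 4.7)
The plan is to apply Lemma~\ref{lem:unique_canonical_biased_subgraph_reps} to reduce the problem of counting canonical graphs for $M$ to the problem of counting placements of the elements of $E - \cl(X)$ on a fixed vertex set, where $X = X(M)$ is the fixing set of $M$. By that lemma, any canonical graph $G$ for $M$ is determined by its vertex set together with the induced subgraph $G[E - \cl(X)]$. Since $M$ is a simple, connected, non-graphic frame matroid of rank $r$, we have $|V(G)| = r$, so I may fix $V(G) = [r]$ throughout. It then suffices to count the possible placements of the elements of $E - \cl(X)$ on this fixed vertex set.

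The heart of the argument is bounding $|E - \cl(X)|$. First, no element $e \in E - \cl(X)$ lies on a long line of $M$, since every long line is contained in $X \subseteq \cl(X)$. More strongly, I claim that in the simple restriction $N := M \mid (E - \cl(X))$ every line has at most $5$ points: if a rank-$2$ flat $F$ of $N$ had $|F| \ge 6$, then $\cl_M(F)$ would be a long line of $M$, so $\cl_M(F) \subseteq X$, whence $F = \cl_M(F) \cap (E - \cl(X)) = \emptyset$, a contradiction. Now fix a basis $B$ of $N$; each element of $E(N) - B$ lies on the unique line of $N$ spanned by some pair of elements of $B$, and each such line has at most $5$ points (so at most $3$ non-basis elements). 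Hence $|E - \cl(X)| = |E(N)| \le r + 3\binom{r}{2} \le 5\binom{r}{2}$ for $r \ge 2$, and the case $r \le 1$ is trivial by simplicity.

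With this bound in hand, the count is immediate: each element of $E - \cl(X)$ can be placed in $G$ either as a loop at one of $r$ vertices or as a link between one of $\binom{r}{2}$ unordered pairs of vertices, giving at most $\binom{r}{2} + r$ possibilities per element. Multiplying over the at most $5\binom{r}{2}$ such elements yields the stated bound $\bigl(\binom{r}{2} + r\bigr)^{5\binom{r}{2}}$ on the number of canonical graphs. The closing estimate $\bigl(\binom{r}{2} + r\bigr)^{5\binom{r}{2}} < r^{5r^2}$ follows from $\binom{r}{2} + r \le r^2$ and $5\binom{r}{2} = 5r(r-1)/2 \le 5r^2/2$, giving $\bigl(r^2\bigr)^{5r(r-1)/2} = r^{5r(r-1)} \le r^{5r^2}$.

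The only non-routine step is establishing the line-length bound on $N$, and this is essentially a direct consequence of the definition of the fixing set together with the elementary observation that a rank-$2$ flat of a restriction extends to a rank-$2$ flat in the ambient matroid. All remaining work is elementary multiplicative counting, with Lemma~\ref{lem:unique_canonical_biased_subgraph_reps} supplying the uniqueness that makes the count well-defined.
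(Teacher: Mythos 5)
Your overall architecture matches the paper: fix the vertex set, invoke Lemma~\ref{lem:unique_canonical_biased_subgraph_reps} so that a canonical graph is determined by the placement of $E-\cl(X)$, and multiply $\binom{r}{2}+r$ placement choices over the elements outside $\cl(X)$. The gap is in your bound on $|E-\cl(X)|$. The step ``fix a basis $B$ of $N$; each element of $E(N)-B$ lies on the unique line of $N$ spanned by some pair of elements of $B$'' is false in general matroids, and false even for graphic (hence frame) matroids with an arbitrary basis: in $M(K_4)$ with a Hamiltonian-path basis, the edge joining the two path-ends has a fundamental circuit of size $4$ and lies on no line spanned by two basis elements; $U_{3,6}$ is an even simpler counterexample. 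An element off the basis need only lie in $\cl(B)$, not in the closure of a $2$-subset of $B$, so ``every line of $N$ has at most $5$ points'' does not give $|E(N)|\le r+3\binom{r}{2}$. Indeed, the correct purely matroidal consequence of the line condition is Kung's bound, roughly $(4^r-1)/3$ points, which is exponential in $r$; plugging that into your product would give a bound far worse than $r^{5r^2}$, so the gap is material to the stated estimate.

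What is missing is exactly the ingredient the paper uses: the frame-graph structure. In any graph $G$ for $M$ (which has $r$ vertices), every element not in the fixing set is an edge of some induced subgraph $G[\{u,v\}]$, and simplicity forces $|E(G[\{u,v\}])|\le 5$ for such pairs, since six or more elements of rank at most $2$ would form a long line and hence lie in $X$. Summing over the $\binom{r}{2}$ vertex pairs gives $|E-\cl(X)|\le|E-X|\le 5\binom{r}{2}$, after which your counting goes through as in the paper. Your observation that every line of $N=M|(E-\cl(X))$ has at most $5$ points is correct; it is only the passage from that to a quadratic element count, without appealing to the graph representation, that fails.
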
 

\begin{proof} 
Every canonical graph for $M$ may be obtained as a graph $H$ on $r$ vertices, constructed as follows.  
Let $X$ be the fixing set of $M$. 
By Lemma \ref{lem:unique_rep_in_span_of_fixing_set}, $M|\cl(X)$ has a unique canonical graph.  
Let $G_1, \ldots, G_k$ be the components of the fixing graph of $M$.  
Let $H$ be the graph obtained by adding isolated vertices to the disjoint union of $G_1, \ldots, G_k$ so that the total number of vertices is $r$.  

Let $Y$ be the set of elements of $M$ not contained in the span of a component of the fixing set.  
These elements are not contained in any of the subgraphs $G_i$, and if $e$ is any edge representing an element of $Y$, and $e$ has endpoints $u, v$, then $|E(G[\{u,v\}])| \leq 5$ 
(else, since $M$ is simple, $e$ would be contained in a long line, and thus in the fixing set).  
Thus $|Y| \leq 5 {r \choose 2}$. 
Now place the elements in $Y$ as edges in $H$. 
Each may be placed as a link or as a loop, subject the constraint that induced subgraphs on each pair of vertices have at most 5 edges, so 
the number of such edge-labelled graphs $H$ 
is certainly less than $\left( {r \choose 2} + r \right)^{5 {r \choose 2}}$. 

Now place the elements in $\cl(X)$ in $H$, by replacing each edge $e_l$ of the fixing graph with its elements $e \in l \subseteq X$, and placing the elements in $\cl(X) - X$. 
By Lemmas \ref{lem:unique_rep_in_span_of_fixing_set} and  \ref{lem:unique_canonical_biased_subgraph_reps}, together the fixing graph of $M$ and the subgraph $H[E-\cl(X)]$ determine the endpoints of each edge $e \in \cl(X)$. 
Hence there are less than 
$\left( {r \choose 2} + r \right)^{5 {r \choose 2}}$
graphs for $M$. 
\end{proof} 

We can now show that excluded minors for frame matroids do not contain lines of arbitrary length.  

\begin{thm} \label{thm:exminhasnolonglineFRAME}
Let $M$ be a rank-$r$ excluded minor for the class of frame matroids.  
Then there exists a positive integer $k$, depending on $r$, such that $M$ does not contain a line of length $k$ as a restriction.  
\end{thm}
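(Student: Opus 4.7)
I would argue by contradiction. Suppose $M$ contains a restriction to a line $l$ with $|l|\ge k$, where $k$ is a sufficiently large function of $r$ to be chosen. For each $e\in l$, the matroid $M\del e$ is frame (since $M$ is an excluded minor), has rank $r$ (since $|l|\ge 6$ guarantees $e$ is not a coloop), and is simple (since $M$ is simple). Applying Lemma~\ref{lem:boundoncanonicalrepsFRAME} to each connected component, $M\del e$ has fewer than some $N=N(r)$ canonical graph representations on at most $r$ vertices. Choose, for each $e\in l$, one canonical graph $G_e$, with $V(G_e)$ relabelled to lie in $[r]$. By the discussion of long lines, the line $l\setminus\{e\}$ has a well-defined endpoint pair $\{u_e,v_e\}\subseteq V(G_e)$ in $G_e$, and all but at most two elements of $l\setminus\{e\}$ are $u_e$-$v_e$ links (the remaining loops at $u_e$ or $v_e$ being forced when elements of $l$ belong to other long lines).

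\textbf{Pigeonhole step.} For each $e\in l$ define a signature
\[
\sigma(e)\ :=\ \bigl(G_e\del(l\setminus\{e\}),\ \{u_e,v_e\},\ \text{loop-pattern of }G_e[l\setminus\{e\}]\bigr),
\]
that is, the restriction of $G_e$ to $E(M)\setminus l$ together with the distinguished endpoint pair and the data (at most two loops) recording which vertices carry the forced loops of $l$. Since the underlying graph in $\sigma(e)$ is determined by a canonical representation of $M\del e$, and there are at most $N$ of these for each $e$, the total number of possible signatures is bounded by some function $F(r)$ of $r$ alone (the loop-pattern and marked pair contribute a multiplicative factor of at most $O(r^2)$). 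Choosing $k>3F(r)$, pigeonhole yields three distinct elements $e,f,g\in l$ with $\sigma(e)=\sigma(f)=\sigma(g)$. Because in each canonical graph the edges of $l\setminus\{e,f,g\}$ are all placed as links between the same pair $\{u,v\}:=\{u_e,v_e\}=\{u_f,v_f\}=\{u_g,v_g\}$ (together with the common forced loops), one obtains
\[
G_e\del\{f,g\}\ =\ G_f\del\{e,g\}\ =\ G_g\del\{e,f\}\ =:\ G^{*}.
\]

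\textbf{Reconstruction and contradiction.} Form $G$ from $G^{*}$ by reinserting $e$, $f$, and $g$ as $u$-$v$ links (equivalently, as each is placed in the $G_x$'s in which it appears). Then $G\del x=G_x$ is a frame graph for $M\del x$ for each $x\in\{e,f,g\}$. I claim $G$ is a frame graph for $M$, contradicting that $M$ is an excluded minor. To check this, let $C$ be a circuit of $M$. If some $x\in\{e,f,g\}$ avoids $C$, then $C$ is a circuit of $M\del x$, which is represented by $G\del x$, so $C$ is realised in $G$ as a balanced cycle, a theta with three unbalanced cycles, or handcuffs; these persist in $G$. Otherwise $\{e,f,g\}\subseteq C$; since $\{e,f,g\}$ is itself a 3-element circuit of $M$ (any three elements of a line), we must have $C=\{e,f,g\}$, and in $G$ these three parallel $u$-$v$ links form a theta whose three $2$-cycles are all unbalanced (each pair is not a circuit of $M$, by simplicity). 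Conversely, one checks analogously that every frame-circuit of $G$ is a circuit of $M$. Hence $G$ represents $M$ as a frame matroid, the desired contradiction.

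\textbf{Main obstacle.} The principal technical hurdle is the pigeonhole step: making precise the signature $\sigma(e)$ so that signature-equality really does imply the displayed equality of deleted canonical graphs. This requires careful use of the structure of long lines in canonical graphs (that elements of $l\setminus\{e\}$ outside the at-most-two forced loops are links between the same two vertices) and a clean bound on $F(r)$ via Lemma~\ref{lem:boundoncanonicalrepsFRAME}. A secondary matter is treating the case where $M\del e$ is disconnected, which costs only a mild adjustment to $N$ since any frame representation is a disjoint union of frame representations of the components.
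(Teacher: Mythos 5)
Your proposal is correct and follows essentially the same route as the paper: bound the number of canonical representations of $M \del e$ via Lemma~\ref{lem:boundoncanonicalrepsFRAME}, pigeonhole along the long line to find $e,f,g$ whose canonical graphs agree after deleting the other two and in which all three appear as links, and splice these into a biased-graph representation of $M$ for a contradiction. The only cosmetic difference is in how the pigeonhole is made precise: the paper (in Corollary~\ref{cor:exminhasnolonglineFRAME}) extracts eleven elements with identical canonical graphs and applies Tur\'an's theorem to an auxiliary graph to guarantee three mutual links, whereas you fold the endpoint pair and forced loop pattern into a signature; both devices serve the same purpose.
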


\begin{proof}
Suppose to the contrary that for some fixed $r$, for every positive integer $k$, there is an excluded minor of rank $r$ containing a line of length at least $k$ as a restriction.  
In particular then, there is an excluded minor $M$ of rank $r$ such that $M$ has a line $l$ sufficiently long for the following argument to hold.  

Since $M$ is simple, for any element $e \in l$, $M \del e$ is simple.  
By Lemma \ref{lem:boundoncanonicalrepsFRAME}, the number of canonical graphs for $M \del e$ is bounded by a function of $r$. 
Hence we may choose three elements $e, f, g \in l$ such that there are canonical biased graph representations $G_e$, $G_f$, and $G_g$, of $M \del e$, $M \del f$, and $M \del g$, respectively, such that 
$G_e \del f, g = G_f \del e, g = G_g \del e, f$, 
and such that $e, f, g$ are links in each of the biased graphs containing them.  
Let $G$ be the graph obtained by adding $e$ to $G_e$ in parallel with edges $f$ and $g$, and let $\Bb$ be the set of cycles that either do not contain $e$ and are balanced in $G_e$ or that contain $e$ and are balanced in $G_f$.  
Consider the circuits of $M$: 
\begin{itemize} 
\item  $efg$ is a circuit in both $M$ and $F\GB$, and 
\item   each circuit of $M$ containing at most two of $e$, $f$, or $g$ is a circuit of $F\GB$, since it is a circuit of one of $F(G_e)$, $F(G_f)$ or $F(G_g)$, and each of $G_e$, $G_f$ and $G_g$ agrees with $\GB$ on their respective ground sets.   
\end{itemize}
Thus we have a biased graph $(G, \Bb)$ representing $M$, a contradiction.   
\end{proof}

We can now prove Theorem \ref{thm:finitenumberexminfixedrankFRAME}.  

\begin{proof}[Proof of Theorem \ref{thm:finitenumberexminfixedrankFRAME}]
Let $r$ be a positive integer, and let $M$ be an excluded minor of rank $r$ for the class of frame matroids.  
By Theorem \ref{thm:exminhasnolonglineFRAME}, there is an integer $k$ such that $M$ does not contain a line of length $k$ as a restriction.  
Arbitrarily choose an element $e \in E(M)$, and let $\GB$ be a biased graph representing $M \del e$.  
Then $|V(G)|=r$ (we may assume $M \del e$ is non-graphic, since $M$ is not an excluded minor for the class of graphic matroids---these are all frame).  
Since $M$ has no line of length $k$, neither does $M \del e$.  
This implies $|E(G)| \leq (k-1) \cdot {r \choose 2}$.  
Hence $|E(M)| \leq (k-1) \cdot {r \choose 2} + 1$.  
There are only a finite number of matroids of rank $r$ on at most this number of elements.  
\end{proof}

It is not difficult to establish a bound on the length of a line in an excluded minor, in terms of rank, by determining how long the line $l$ in the proof of Theorem \ref{thm:exminhasnolonglineFRAME} must be to in order to guarantee the existence of the elements $e, f, g \in l$ asserted in the proof.  

\begin{cor} \label{cor:exminhasnolonglineFRAME}
Let $M$ be a rank-$r$ excluded minor for the class of frame matroids.  
Then $M$ does not contain as a restriction a line of length greater than $10 r^{5r^2}$.  
\end{cor}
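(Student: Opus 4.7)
We quantify the proof of Theorem~\ref{thm:exminhasnolonglineFRAME}, bounding how long $l$ must be to guarantee the existence of the three elements $e, f, g \in l$ with compatible canonical representations. Suppose for contradiction that $M$ contains a line $l$ of length $k > 10 r^{5r^2}$.

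For each $e \in l$, select a canonical graph $G_e$ for $M \del e$ on the fixed vertex set $\{1, \ldots, r\}$; by Lemma~\ref{lem:boundoncanonicalrepsFRAME} there are fewer than $r^{5r^2}$ such canonical graphs for each $M \del e$. Since $|l| \geq 7$, in $G_e$ the line $l - \{e\}$ lies between a pair of endpoints $\{u_e, v_e\}$, with at most one loop at each endpoint. Form the biased graph $G_e^+$ on $\{1, \ldots, r\}$ with edge set $E(M)$ by adding $e$ as a link $u_e v_e$ to $G_e$, inheriting the bias of cycles through $e$ in the natural way using that every triple in $l$ is a circuit of $M$. If three distinct $e, f, g \in l$ satisfy $G_e^+ = G_f^+ = G_g^+$ as labeled biased graphs, then $G_e \del f, g = G_f \del e, g = G_g \del e, f$ and each of $e, f, g$ appears as a link in the two graphs containing it; the construction in the proof of Theorem~\ref{thm:exminhasnolonglineFRAME} then produces a biased graph representing $M$, contradicting that $M$ is an excluded minor.

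It therefore suffices to show that $|\{G_e^+ : e \in l\}| \leq 5 r^{5r^2}$; pigeonhole applied to $|l| > 10 r^{5r^2}$ then delivers the required triple. Using Lemma~\ref{lem:fixing_graphs_unique}, we may arrange the vertex labellings so that the fixing graphs of all $M \del e$ coincide as labeled graphs on $\{1, \ldots, r\}$ (the matroids $M \del e$ agree on $E(M) - l$ and share the same long lines other than the copy of $l$), which fixes the pair $\{u_e, v_e\}$ independently of $e$. By Theorem~\ref{lem:unique_rep_in_span_of_fixing_set}, the canonical graph on the closure of each component of the fixing set is then unique up to pendant roll-ups, so the variability in $G_e$ is confined to the placements of elements lying outside that closure; by the argument of Lemma~\ref{lem:boundoncanonicalrepsFRAME} there are fewer than $r^{5r^2}$ configurations of these. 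A constant factor of at most $5$ absorbs the additional choices coming from whether $e$ itself sits as a link or as one of the at-most-two loops allowed on the line in some canonical graph, and from the small amount of pendant roll-up freedom on pendant lines, giving $|\{G_e^+ : e \in l\}| \leq 5 r^{5r^2}$.

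The main obstacle is this final counting step: carefully controlling the constant factor arising from the loop-versus-link placement ambiguities in canonical graphs of the various matroids $M \del e$ as $e$ ranges over $l$, so that the naive pigeonhole bound $|l| > 2 \cdot 5 r^{5r^2}$ suffices to produce three elements sharing a common signature $G_e^+$.
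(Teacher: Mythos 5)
Your overall plan---quantifying the proof of Theorem~\ref{thm:exminhasnolonglineFRAME} by counting canonical representations of the matroids $M \del e$, $e \in l$, via Lemma~\ref{lem:boundoncanonicalrepsFRAME}---matches the paper's first step, but the step you yourself flag as ``the main obstacle'' is a genuine gap, and it is exactly where the paper's proof does its real work. You need $|\{G_e^+ : e \in l\}| \leq 5 r^{5r^2}$, and you justify this only by asserting that ``a constant factor of at most $5$ absorbs'' the loop-versus-link and pendant roll-up ambiguities. Lemma~\ref{lem:boundoncanonicalrepsFRAME} bounds the number of canonical graphs of each \emph{single} matroid $M \del e$; it does not bound the number of distinct signatures as $e$ ranges over $l$, because the $G_e$ are representations of different matroids, the fixing sets of the $M \del e$ need not coincide (deleting $e$ may destroy another long line through $e$), and which elements of $l$ are forced to appear as loops can vary with $e$. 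A priori the union of the signature sets could have size on the order of $|l| \cdot r^{5r^2}$, which destroys the pigeonhole, and no argument is given for the factor $5$.

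The paper never needs such a uniform signature bound. It pigeonholes only far enough to obtain $11$ elements $e_1, \ldots, e_{11} \in l$ whose canonical graphs agree up to relabelling of $e_1, \ldots, e_{11}$, and then handles the loop/link ambiguity combinatorially: since no $G_i$ has more than two of these elements as loops, the auxiliary graph on $\{1, \ldots, 11\}$ in which $i \sim j$ when $e_i$ is a link in $G_j$ and $e_j$ is a link in $G_i$ has at least $\binom{11}{2} - 22 > 11^2/4$ edges, so Tur\'an's theorem yields a triangle, i.e.\ a triple $e, f, g$ all represented as links, as required in the proof of Theorem~\ref{thm:exminhasnolonglineFRAME}. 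This Tur\'an device is precisely what replaces your unproven factor-of-$5$ claim; without it (or an actual proof of your signature count) the stated bound $10 r^{5r^2}$ is not established.
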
 

\begin{proof} 
Let $e \in E(M)$, and consider a graph $G$ for $M \del e$.  
By Lemma \ref{lem:boundoncanonicalrepsFRAME}, the number of canonical graphs for $M \del e$ is less than $r^{5r^2}$. 
Thus if $M$ has a line $l$ of length $10 r^{5r^2}$, then it is guaranteed that there that there are 11 elements $e_1, \ldots, e_{11} \in l$ such that the canonical graphs $G_{i}$ for $M \del e_i$ ($i \in \{1, \ldots, 11\}$) are identical, up to relabelling of $e_1, \ldots, e_{11}$.   

Construct an auxiliary graph $H$ on vertex set $1, \ldots, 11$, in which $i$ is adjacent to $j$ if $e_i$ is a link in $G_j$ and $e_j$ is a link in $G_i$.  
The edge set of $H$ is complementary to the edge set of the graph $H^c$ on $1, \ldots, 11$, in which two vertices $i$ and $j$ are adjacent if either $e_i$ is a loop in $G_j$ or $e_j$ is a loop in $G_i$.  
Since no graph $G_i$ has more than two of $e_1, \ldots, e_{11}$ as loops, $|E(H^c)| \leq 2|V(H)|$, and so $|E(H)| \geq {|V(H)| \choose 2} - 2|V(H)|$. 
Since this is greater than $|V(H)|^2/4$ when $|V(H)|>10$, by Tur\'an's Theorem, $H$ contains a triangle.  
Let $i, j, k$ be the vertices of this triangle.  
Then, up to relabelling of edges $e_i$, $e_j$, and $e_k$, $G_i = G_j = G_k$ and $e_i$, $e_j$, and $e_k$ are links in each of the  graphs in which they appear, as required in the proof of Theorem \ref{thm:exminhasnolonglineFRAME}.  
\end{proof} 

It is now straightforward to establish a bound, in terms of rank, on the number of elements in an excluded minor for the class of frame matroids.  
We simply substitute the bound on $k$ given by Corollary \ref{cor:exminhasnolonglineFRAME} 
into the expression bounding $|E(M)|$ in the proof of Theorem \ref{thm:finitenumberexminfixedrankFRAME}.  

\begin{cor} \label{cor:boundonsizeexminFRAME}
Let $M$ be a rank-$r$ excluded minor for the class of frame matroids.  
Then 
$|E(M)| \leq (10r^{5r^2}-1){r \choose 2} + 1 < 5r^{5r^2+2}$. 
\end{cor}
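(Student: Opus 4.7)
The plan is to carry out the substitution that the authors sketch in the paragraph immediately preceding this corollary: combine the explicit line-length bound from Corollary \ref{cor:exminhasnolonglineFRAME} with the edge-count derived inside the proof of Theorem \ref{thm:finitenumberexminfixedrankFRAME}.

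Concretely, I would first fix any element $e \in E(M)$ and consider a frame graph $\GB$ representing $M \del e$. Because every graphic matroid is frame, the excluded minor $M$ is non-graphic, and consequently $M \del e$ is non-graphic as well; by the rank formula for non-graphic frame matroids recalled in Section~\ref{sec:Preliminaries}, this forces $|V(G)| = r$. By Corollary \ref{cor:exminhasnolonglineFRAME}, $M$ contains no line of length greater than $10 r^{5r^2}$, and the same therefore holds for $M \del e$. Since any collection of parallel links between a single pair of vertices of $G$ lies inside one line of $M \del e$, the number of links joining any two vertices of $G$ must be strictly less than $10 r^{5r^2}$. Summing over the $\binom{r}{2}$ pairs of vertices then yields
\[
|E(G)| \leq (10 r^{5r^2} - 1)\binom{r}{2},
\]
and adding back $e$ gives $|E(M)| \leq (10 r^{5r^2} - 1)\binom{r}{2} + 1$. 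The strict inequality $(10 r^{5r^2} - 1)\binom{r}{2} + 1 < 5 r^{5r^2 + 2}$ is then a routine estimate using $\binom{r}{2} < r^2/2$.

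I do not anticipate any substantive obstacle: the argument is a direct numerical substitution into the counting step of Theorem \ref{thm:finitenumberexminfixedrankFRAME}, with all nontrivial work already shouldered by Corollary \ref{cor:exminhasnolonglineFRAME} (and through it, by Lemma \ref{lem:boundoncanonicalrepsFRAME} and Theorem \ref{lem:unique_rep_in_span_of_fixing_set}). The only small point requiring care is the verification that $|V(G)| = r$, handled by the non-graphic observation above.
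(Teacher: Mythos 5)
Your proposal is correct and is exactly the paper's argument: substitute the line-length bound of Corollary \ref{cor:exminhasnolonglineFRAME} (whose proof in fact rules out a line of length exactly $10r^{5r^2}$, which is what justifies the $-1$, since the stated corollary alone only gives at most $10r^{5r^2}$ parallel elements) into the counting step $|E(G)|\leq (k-1)\binom{r}{2}$ from the proof of Theorem \ref{thm:finitenumberexminfixedrankFRAME}. The one point to tighten is that your per-pair count speaks only of parallel links, while a frame graph for $M\del e$ may also carry unbalanced loops; these are absorbed by noting that every edge of $G[\{u,v\}]$ (loop or link) lies in the rank-$2$ closure of the pair of frame elements $u,v$, hence in a single line of $M\del e$, so the same bound of $10r^{5r^2}-1$ edges applies to each induced pair subgraph and summing over the $\binom{r}{2}$ pairs still bounds $|E(G)|$.
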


\section{Lifted-graphic matroids} \label{sec:lift} 

Throughout this section, for a lifted-graphic matroid $M$ and a graph $G$ for $M$, we always mean that $G$ is a lift graph for $M$. 
Let $M$ be a simple lifted-graphic matroid, let $G$ be a graph for $M$, let $X$ be the fixing set of $M$, and let $W$ be the fixing graph of $G$.  
Whereas a frame matroid has a unique fixing graph (Lemma \ref{lem:fixing_graphs_unique}), in contrast, a lifted-graphic matroid may have many fixing graphs: 
if $W$ is a forest, then any forest $R$ with $E(R)=E(W)$ is the fixing graph of a graph for $M|X$.  
Dealing with this fact, along with the different behaviour of loops in graphs for lifted-graphic matroids, is the business of the next few lemmas.  
For convenience, we shall leave the definition of a long line unchanged, as a line containing at least six rank-1 flats; note however, that four rank-1 flats in a long line are sufficient for the proofs in this section. 

Let $M$ be a simple lifted-graphic matroid.  
Then any graph for $M$ has at most one loop, and if $e$ is a loop in any graph for $M$, then $e$ is contained in every long line of $M$.  
Moreover, if $e$ is a loop in a graph $G$ for $M$, then the graph obtained from $G$ by removing $e$, adding a isolated vertex $v_0$, and placing $e$ as a loop incident to $v_0$ is a graph for $M$.  
Hence among all lift graphs for $M$, we choose as \emph{canonical} those that have the least number of loops, and subject to this, have exactly two components, one of which consists of either a single isolated vertex or a single vertex to which a loop is incident.  
If $G$ is a graph for $M$ that is not canonical, then a canonical graph for $M$ may be obtained from $G$ by repeatedly choosing a pair $v,v'$ of vertices, one in each of two components, and identifying $v$ and $v'$ as a single vertex, then adding a single isolated vertex $u$ and, if $G$ has a loop $e$, replacing $e$ with a loop incident to $u$.  
If $G$ is a canonical graph for $M$, then $|V(G)| = r(M) + 1$.  

We deal with the case that $M$ has just one long line separately. 
The case $M$ has more than one long line is a little bit easier. 

\begin{lem} \label{lem:liftmorethanonelongline} 
Let $M$ be a lifted-graphic matroid of rank $r$ with at least two long lines.  
The following are equivalent. 
\begin{itemize} 
\item $e$ is contained in more than one long line of $M$, 
\item $e$ is contained in all long lines of $M$, 
\item $e$ is a loop in every graph for $M$. 
\end{itemize} 
Moreover, if $M$ is simple, then $M$ has at most one element in more than one long line.  
\end{lem}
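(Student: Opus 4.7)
The plan is to prove the three equivalences by running around the cycle (third) $\Rightarrow$ (second) $\Rightarrow$ (first) $\Rightarrow$ (third), using the order in which the conditions are listed. The implication (third) $\Rightarrow$ (second), that an element which is a loop in every graph lies in every long line, is already recorded in the preliminaries of Section \ref{sec:lift}. The implication (second) $\Rightarrow$ (first) is immediate from the standing hypothesis that $M$ has at least two long lines. So the real content is the remaining implication (first) $\Rightarrow$ (third).

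For (first) $\Rightarrow$ (third), suppose $e$ lies in two distinct long lines $l_1$ and $l_2$, and let $G$ be any graph for $M$. I would argue by contradiction that $e$ cannot be a link in $G$. Assume $e$ is a link with endpoints $u,v$. Appeal to the preliminary classification of $G[l]$ for a long line $l$: in each of the three admissible forms, every link element of $l$ joins the two endpoints of $l$. Hence both $l_1$ and $l_2$ have endpoint pair $\{u,v\}$. Since $l_1\ne l_2$ are distinct rank-$2$ flats of a simple matroid, their intersection is a rank-$1$ flat, so $l_1\cap l_2=\{e\}$. Each $l_i$ has at least six elements and hosts at most one loop of $G$, so I can pick a link $f\in l_1\setminus\{e\}$ and a link $f'\in l_2\setminus\{e\}$, both joining $u$ to $v$.

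Now $\{e,f,f'\}$ is a triple of parallel links, each pair of which forms an unbalanced $2$-cycle (simplicity rules out balanced $2$-cycles). The lift rank formula gives
\[
r(\{e,f,f'\}) \;=\; |V| - c + \delta \;=\; 2 - 1 + 1 \;=\; 2 \;=\; r(\{e,f\}),
\]
whence $f'\in\cl(\{e,f\})=l_1$, contradicting $f'\in l_2\setminus l_1$. Therefore $e$ must be a loop in $G$, and since $G$ was arbitrary, the third condition holds.

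Finally, the ``moreover'' clause follows from the equivalence together with the preliminary fact that any graph for a simple lifted-graphic matroid contains at most one loop: if $e\ne e'$ both lay in more than one long line, each would be a loop of any graph for $M$, which is impossible. I expect the crux of the argument to be the rank computation that produces the inclusion $f'\in l_1$; once this is in hand, the contradiction and the rest of the lemma are essentially immediate. The only delicacy beforehand is the extraction of the two link representatives $f,f'$, and the six-element hypothesis on long lines together with the at-most-one-loop constraint per line makes this routine.
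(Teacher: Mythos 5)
Your proposal is correct and takes essentially the same route as the paper: the paper's own (very terse) proof runs the same cycle of implications, asserting that an element of two long lines must be a loop in every graph and that a loop lies in every long line, and your rank computation via the classification of $G[l]$ simply fills in the detail the paper leaves to its preliminaries, while the ``moreover'' clause is handled the same way (two such elements would be two loops, impossible in a simple lifted-graphic matroid). Note only that, like the paper's proof, your argument for the equivalences quietly uses the section's standing simplicity assumption, which the lemma's statement defers to the final clause.
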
 

\begin{proof} 
If $e$ is contained in more than one long line, then in any graph for $M$, $e$ must be a loop.  
This immediately implies $e$ is contained in all long lines of $M$.  
Conversely, if $e$ is a loop in a graph for $M$, then it is immediate that $e$ is contained in all long lines of $M$.  
Thus if $M$ has two elements in more than one long line, then both are loops in any graph for $M$, and so form a circuit of size 2.  
\end{proof} 

\begin{lem} \label{lem:numbercanongraphslotsalines} 
Let $M$ be a simple rank-$r$ lifted-graphic matroid containing more than one long line.  
The number of canonical graphs for $M$ is less than 
${r \choose 2}^{6{r \choose 2}} < r^{6r^2}$. 
\end{lem}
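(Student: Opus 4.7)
The plan is to bound the number of canonical graphs for $M$ by showing each is determined by a small amount of combinatorial data. Let $G$ be a canonical graph for $M$, so $|V(G)| = r+1$ and $G$ has exactly two components: one isolated vertex $v_0$ (possibly carrying a single loop), and a spanning component on the remaining $r$ \emph{regular} vertices. By Lemma \ref{lem:liftmorethanonelongline}, any element $e_0$ that is a loop in some graph for $M$ is a loop in every graph for $M$ and lies in every long line of $M$; if such $e_0$ exists, it is placed at $v_0$ in every canonical graph.

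The argument hinges on three structural observations. First, from the classification of $G[l]$ given earlier for a long line $l$ in a simple lifted-graphic matroid, the link elements of $l$ all share a common pair $\{u_l, v_l\}$ of regular endpoints (the \emph{main pair} of $l$), and $l$ contains $e_0$ if and only if $e_0$ exists. Second, distinct long lines have distinct main pairs: otherwise their link elements would lie in a common rank-2 flat, forcing the two lines to coincide as flats. Third, any pair $\{u,v\}$ of regular vertices supporting $\geq 5$ parallel links has all those links contained, together with $e_0$ when present, in a common long line. This last observation uses that in a lift matroid all parallel edges at a pair of vertices lie in a single rank-2 flat, and, when $e_0$ exists, the circuit form ``two vertex-disjoint unbalanced cycles'' applied to $\{e_1,e_2,e_0\}$ for any two parallel links $e_1,e_2$ shows $e_0 \in \cl(\{e_1,e_2\})$.

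From the second observation, $M$ has at most ${r \choose 2}$ long lines; from the third, each pair of regular vertices supports at most $5$ elements of $E(M) - X$, so $|E(M) - X| \leq 5 {r \choose 2}$. Now $G$ is determined by two assignments: (a) for each long line of $M$, its main pair of regular vertices; and (b) for each element of $E(M) - X$, the pair of regular vertices joined by that element. Here we use that non-$X$ elements cannot be loops in a canonical graph, since the only loop is $e_0 \in X$. Each assignment has ${r \choose 2}$ choices, and there are at most ${r \choose 2} + 5 {r \choose 2} = 6 {r \choose 2}$ items to assign, giving an upper bound of ${r \choose 2}^{6 {r \choose 2}} < r^{6r^2}$ on the number of canonical graphs.

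The main obstacle is establishing the third structural observation: that parallel edges between a pair of regular vertices in a simple lift matroid always lie in a common rank-2 flat, and that this flat contains $e_0$ whenever $e_0$ exists. The first part uses the theta property: any three parallel edges form a theta whose three 2-cycles are all unbalanced (else a 2-cycle would be a 2-circuit of $M$, contradicting simplicity), making the three edges a circuit of rank 2. The second part is a key feature distinguishing the lift case from the frame case, namely that two vertex-disjoint unbalanced cycles form a circuit in a lift matroid.
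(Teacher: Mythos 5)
Your proposal is correct and follows essentially the same route as the paper: both arguments fix the $r+1$ vertices of a canonical graph, observe that the unique possible loop is forced onto the isolated vertex and that each long line's remaining elements are forced onto a single pair of regular endpoints, and then bound the count by $\binom{r}{2}$ choices for each of at most $\binom{r}{2}$ long lines and each of at most $5\binom{r}{2}$ elements outside the fixing set. Your explicit justifications (distinct long lines have distinct main pairs; a loop lies in every long line via the two-vertex-disjoint-unbalanced-cycles circuit) are the same facts the paper uses, stated there in the preliminaries and in Lemma 3.1.
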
 

\begin{proof} 
Every canonical graph for $M$ may be obtained as follows.  
Start with $r+1$ vertices, $v_0, v_1, \ldots, v_r$.  
Let $W$ be the collection of edges $\{e_l : l$ is long line of $M\}$. 
Place the edges in $W$ as links between distinct pairs of vertices in $\{v_1, \ldots, v_r\}$.  
Since $|W| \leq {r \choose 2}$, there are less than ${r \choose 2}^{r \choose 2}$ ways this may be done. 
For elements that are not in the fixing set of $M$, add at most five links between pairs of vertices in $\{v_1, \ldots, v_r\}$ that are not already endpoints of an edge $e_l \in W$ 
(as six such elements would be contained in the fixing set of $M$). 
Hence there are less than $5 {r \choose 2}$ elements not in the fixing set of $M$, and so less than ${r \choose 2}^{5{r \choose 2}}$ ways this may be done. 

Now if $M$ has an element $e$ contained in more than one long line (and so contained in all long lines), then by Lemma \ref{lem:liftmorethanonelongline}, $e$ must be a loop, and the only loop, in every graph for $M$: place $e$ as a loop incident to $v_0$, and replace each edge $e_l \in W$ with $|l|-1$ links between its endpoints. 
If $M$ has no element contained in more than one long line, then by Lemma \ref{lem:liftmorethanonelongline} no graph for $M$ has a loop: leave $v_0$ as an edgeless, isolated vertex, and replace each edge $e_l \in E(W)$ with $|l|$ links between its endpoints.  

Thus the number of canonical graphs for $M$ is bounded by the number of ways to place the edges of $W$ times the number of ways the place elements not in the fixing set of $M$, and this number is certainly less than ${r \choose 2}^{r \choose 2} {r \choose 2}^{5{r \choose 2}}$. 
\end{proof} 

Next we consider the case that $M$ has just one long line.  

\begin{lem} \label{lem:liftjustonelongline}
Let $M$ be a simple rank-$r$ lifted-graphic matroid containing just one long line $l$. 
Then the number of canonical graphs for $M$ is less than 
${r \choose 2}^{5{r \choose 2}} <  r^{5r^2}$. 
\end{lem}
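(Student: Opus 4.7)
The plan is to enumerate canonical graphs for $M$ in the same spirit as Lemma~\ref{lem:numbercanongraphslotsalines}, but treating the single long line by hand. Every canonical graph has $r+1$ labelled vertices $v_0, v_1, \ldots, v_r$ with $v_0$ forming the small component, and the endpoints of $l$ must lie in $\{v_1, \ldots, v_r\}$ because $v_0$ carries no non-loop edges and $l$ has at least five links. This gives at most $\binom{r}{2}$ choices for the pair $\{u,v\}$ of endpoints of $l$.

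Next I would place each non-$l$ element as a link between a pair in $\{v_1, \ldots, v_r\}$ other than $\{u,v\}$, since $l$ is a closed rank-$2$ flat and so any edge between $u$ and $v$ already lies in $l$. Between any other pair, at most five non-$l$ elements can appear, for otherwise six or more parallel edges would span a second long line and contradict the uniqueness of $l$. Hence the non-$l$ elements number at most $5(\binom{r}{2}-1)$, each with at most $\binom{r}{2}-1$ choices of pair, contributing at most $(\binom{r}{2}-1)^{5(\binom{r}{2}-1)} \le \binom{r}{2}^{5\binom{r}{2}-5}$ placements.

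The main obstacle is the third step: after $\{u,v\}$ and the non-$l$ placement have been fixed, the placement of the elements of $l$ themselves must be determined by $M$, so that this step contributes no multiplicative factor depending on $|l|$. If $M$ has a loopless representation, then every canonical graph is loopless and $l$ consists of $|l|$ links between $u$ and $v$; otherwise every canonical graph carries exactly one loop at $v_0$ and I must argue that this loop element is uniquely determined. Suppose two canonical graphs $G_1, G_2$ agreed on $\{u,v\}$ and on the non-$l$ placement but carried loops $e_1 \neq e_2 \in l$. Converting $e_1$ from a loop to a $\{u,v\}$-link in $G_1$ would produce a loopless biased graph $G_1'$; as the minimum loop count for $M$ is $1$, we have $L(G_1') \neq M$, and the lost circuit must have the form $\{e_1\} \cup C$ where $C$ is an unbalanced cycle of the non-$l$ part not passing through both $u$ and $v$ (either a $2$-cycle outside $l$, or a longer unbalanced cycle avoiding at least one of $u,v$). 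In $G_2$ the non-$l$ part is the same, so $C$ is still such a cycle, but $e_1$ is now a $\{u,v\}$-link, and $\{e_1\} \cup C$ cannot then form any of the lift-matroid circuit configurations (balanced cycle, unbalanced theta, tight handcuffs, or vertex-disjoint unbalanced cycles). This contradicts $\{e_1\} \cup C$ being a circuit of $M = L(G_2)$, forcing $e_1 = e_2$.

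Combining the three steps gives at most $\binom{r}{2} \cdot \binom{r}{2}^{5\binom{r}{2}-5} = \binom{r}{2}^{5\binom{r}{2}-4} \le \binom{r}{2}^{5\binom{r}{2}}$ canonical graphs, and the final inequality $\binom{r}{2}^{5\binom{r}{2}} < r^{5r^2}$ is checked as in Lemma~\ref{lem:numbercanongraphslotsalines}.
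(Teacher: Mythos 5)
Your overall strategy is the paper's: choose the endpoints of $l$, place the (at most $5\bigl(\binom{r}{2}-1\bigr)$) elements outside $l$, and then argue that $M$ together with this data forces the placement of the elements of $l$, so the last step contributes no factor. The paper settles that last step directly, by the criterion that $e\in l$ must be a loop exactly when $e\in\cl(E(H-u))\cup\cl(E(H-v))$ and must otherwise be a $u$-$v$ link in a canonical graph. Your substitute — determining first whether a loop occurs (via the minimum loop count) and then that its identity is forced via the exchange argument with $G_1$ and $G_2$ — reaches the right conclusion, and your final contradiction is sound: a $u$-$v$ link cannot lie in any lift circuit together with a cycle that misses one of $u,v$.

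The gap is the sentence ``we have $L(G_1')\neq M$, and the lost circuit must have the form $\{e_1\}\cup C$ with $C$ an unbalanced cycle of the non-$l$ part not passing through both $u$ and $v$.'' This is exactly the content of the lemma and is asserted, not proved. First, $G_1'$ is not yet a biased graph: you must declare the biases of the new cycles through the link $e_1$ (the only consistent choice is all unbalanced, since every circuit of $M$ containing $e_1$ is a loop-plus-unbalanced-cycle set in $G_1$ and hence is not a cycle of $G_1'$; one should also note the theta property survives). Second, even then, $L(G_1')\neq M$ does not immediately yield a \emph{lost} circuit of that form: the two matroids agree on circuits avoiding $e_1$, and among circuits through $e_1$ the discrepancy could a priori be a \emph{gained} circuit of $L(G_1')$. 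One needs the case check over the lift circuit types: every unbalanced cycle $C$ through both $u$ and $v$ still gives a circuit $\{e_1\}\cup C$ of $L(G_1')$ (an all-unbalanced theta), there are no balanced cycles through $e_1$, and any gained handcuff or vertex-disjoint-pair circuit through $e_1$ requires an unbalanced cycle meeting the cycle through $e_1$ in at most one vertex, hence missing $u$ or $v$ — which in turn produces the lost circuit $\{e_1\}\cup C$ you want. Only after this analysis does the witness $C$ exist and your contradiction in $G_2$ go through. (A smaller omission: you place all non-$l$ elements as links; this needs the observation, from the paper's preliminaries, that a loop in a graph for a simple lifted-graphic matroid lies in every long line and hence in $l$.) So the route is essentially the paper's, but the crux step needs this missing verification.
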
 

\begin{proof} 
Every canonical graph for $M$ may be obtained from a graph $H$ constructed as follows.  
Start with $r+1$ vertices, $v_0, v_1, \ldots, v_r$.  
Let $e_l$ be an edge for a fixing graph of $M$, corresponding to $l$.  
Place the edge $e_l$ as a link between vertices $v_1$ and $v_2$. 
Add at most five links between pairs of vertices in $\{v_1, \ldots, v_r\}$ that are not both endpoints of $e_l$.  
Next, either replace $e_l$ with $|l|$ links, or replace $e_l$ with $|l|-1$ links and place a loop incident to $v_0$.  

After placing $e_l$, there are less than $\left({r \choose 2} - 1\right)^{5{r \choose 2}}$ ways to place the elements in $E(M) - l$. 
Now consider the number of ways that the elements of $l$ may be placed.  
For each element $e \in l$, if $e \in \cl(E(H-v_1))$ or $e \in \cl(E(H-v_2))$ (equivalently, if either $H-v_1$ or $H-v_2$ contains an unbalanced cycle $C$ for which $C \cup e$ is a circuit of $M$), then $e$ must be placed as a loop if this is to be a graph for $M$. 
On the other hand, if $e$ is not in either of these closures (equivalently, neither $H-v_1$ nor $H-v_2$ contain an unbalanced cycle forming a circuit with $e$), then $e$ must be placed as a link with endpoints $v_1$ and $v_2$, since we are constructing a canonical graph for $M$. 
In other words, together $H[E(M)-l]$ and $M$ determine the placement of the elements in $l$ in $H$.  
After placing $e_l$ and the elements of $M$ not in $l$, there is just one way to place the elements of $l$.  
Hence there are less than $\left({r \choose 2} - 1\right)^{5{r \choose 2}}$ canonical graphs for $M$.  
\end{proof} 

We may now prove the key fact used in the proof of Theorem \ref{thm:finitenumberexminfixedrankLIFT}. 
For each positive integer $r$, set 
$n(r) = r^{6r^2}$.  

\begin{thm} \label{thm:exminhasnolonglineLIFT}
Let $M$ be a rank-$r$ excluded minor for the class of lifted-graphic matroids.  
Then there exists a positive integer $k$ such that $M$ does not contain a line of length $k$ as a restriction.  
\end{thm}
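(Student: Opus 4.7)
The plan is to follow the strategy of the proof of Theorem~\ref{thm:exminhasnolonglineFRAME}. Suppose for contradiction that for some fixed $r$, rank-$r$ excluded minors for the class of lifted-graphic matroids contain arbitrarily long lines as restrictions; let $M$ be such an excluded minor with a line $l$ long enough for the following argument. Since $M$ is simple and an excluded minor, for each $e \in l$ the matroid $M\del e$ is a simple lifted-graphic matroid of rank $r$. By Lemmas~\ref{lem:numbercanongraphslotsalines} and~\ref{lem:liftjustonelongline}, $M\del e$ has fewer than $n(r)$ canonical graphs.

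For each $e \in l$, fix a canonical graph $G_e$ for $M\del e$. Provided $|l|$ is large enough that $l\setminus\{e\}$ remains a long line of $M\del e$, its elements share a common pair of endpoints in $G_e$, and at most one of them can be represented as a loop (on the isolated vertex $v_0$ of $G_e$). A pigeonhole/Tur\'an counting argument, analogous to that in the proof of Corollary~\ref{cor:exminhasnolonglineFRAME} but also avoiding the at most one loop in each $G_e$, yields three elements $e, f, g \in l$ and canonical graphs $G_e, G_f, G_g$ for $M\del e, M\del f, M\del g$ respectively, such that
\[
G_e\del\{f,g\} \;=\; G_f\del\{e,g\} \;=\; G_g\del\{e,f\},
\]
and such that $e, f, g$ appear as links between a common pair of vertices $u, v$ in each of the graphs containing them.

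I would then construct a biased graph $(G, \Bb)$ representing $M$, contradicting the assumption that $M$ is an excluded minor. Let $G$ be obtained from $G_e$ by adding $e$ as a $u$-$v$ link, so that $e, f, g$ are three parallel $u$-$v$ links in $G$. Define $\Bb$ to consist of all cycles of $G$ that either do not contain $e$ and are balanced in $(G_e, \Bb_e)$, or contain $e$ and are such that the cycle obtained by replacing $e$ with $f$ is balanced in $(G_f, \Bb_f)$. Verification proceeds by matching circuits of $L(G, \Bb)$ with those of $M$ on a case-by-case basis: circuits supported in $E(M) - \{e, f, g\}$ are handled by the common restriction of $G_e, G_f, G_g$; circuits containing exactly one of $e, f, g$ are inherited from the appropriate canonical graph; the three $u$-$v$ 2-cycles within $\{e, f, g\}$ are each unbalanced (since $M$ is simple), so the theta $\{e, f, g\}$ is a circuit of $L(G, \Bb)$ matching the circuit $\{e, f, g\}$ of $M$; and lifted-graphic circuits (thetas, tight handcuffs, or pairs of vertex-disjoint unbalanced cycles) involving two of $e, f, g$ reduce via the swap $e\leftrightarrow f$ or $e\leftrightarrow g$ to circuits already accounted for in the canonical graphs.

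The main obstacle will be the verification that $(G, \Bb)$ represents $M$---specifically, checking that $\Bb$ satisfies the theta property, so that $L(G,\Bb)$ is a matroid, and that its circuits are exactly those of $M$ among the four families listed by Zaslavsky (balanced cycle; theta of three unbalanced cycles; tight handcuffs; two vertex-disjoint unbalanced cycles). The additional care required in the lift case, as compared with the frame case, stems from the two-component convention for canonical lift graphs and the possibility of a single loop on the isolated vertex $v_0$; however, since at most one element of $l\setminus\{e\}$ is ever a loop in $G_e$, the Tur\'an step needs only a mild quantitative adjustment to the estimate used in Corollary~\ref{cor:exminhasnolonglineFRAME}.
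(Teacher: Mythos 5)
Your proposal is correct and follows essentially the same route as the paper: the same Tur\'an/pigeonhole selection of three elements $e,f,g$ represented identically as links, the same parallel-edge construction of $G$, and the same circuit-matching verification (the paper in fact relegates the quantitative Tur\'an step to Corollary~\ref{cor:exminhasnolonglineLIFT} and leaves $\Bb$ implicit, since balance is determined by $M$). One small slip: in your rule for cycles containing $e$, the cycle obtained by swapping $e$ for $f$ lives in $G_e$ (which contains $f$), not in $G_f$; either check that swapped cycle in $(G_e,\Bb_e)$ or, as the paper does, test the original cycle directly in $(G_f,\Bb_f)$.
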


\begin{proof}
Suppose to the contrary that for some fixed $r$, for every positive integer $k$, there is an excluded minor of rank $r$ containing a line of length at least $k$.  
In particular then, there is an excluded minor $M$ with a line $l$ sufficiently long for the following argument to hold.  

For each $e \in l$, consider the graphs for $M \del e$.  
Since for each $e \in l$ there are, by Lemmas \ref{lem:numbercanongraphslotsalines} and \ref{lem:liftjustonelongline}, at most $n(r)$ graphs for $M \del e$, we may assume $l$ is sufficiently long that there are three elements $e, f, g \in l$, such that there are graphs $G_e$, $G_f$, and $G_g$, for $M \del e$, $M \del f$, and $M \del g$, respectively, such that 
$G_e \del f,g = G_f \del e,g = G_g \del e,f$, 
and such that $e$, $f$, and $g$ are links in each of the graphs containing them.  
Let $G$ be the graph obtained by adding $e$ to $G_e$ in parallel with edges $f$ and $g$.  
Consider the circuits of $M$: 
\begin{itemize} 
\item $efg$ is a circuit in $M$ and a theta subgraph of $G$.  
\item Each circuit of $M$ containing at most two of $e$, $f$, or $g$ is a circuit of one of $M \del e$, $M \del f$, or $M \del g$, and the graphs $G_e$, $G_f$, and $G_g$ for $M \del e$, $M \del f$, and $M \del g$, resp., agree with $G$ on their respective ground sets.  
\end{itemize}
Thus $G$ is a graph for $M$, a contradiction.   
\end{proof}

\begin{proof}
[Proof of Theorem \ref{thm:finitenumberexminfixedrankLIFT}]
Let $r$ be a positive integer, and let $M$ be an excluded minor of rank $r$ for the class of lift matroids.  
By Theorem \ref{thm:exminhasnolonglineLIFT}, there is an integer $k$ such that $M$ does not contain a line of length $k$ as a restriction.  
Arbitrarily choose an element $e \in E(M)$, and let $G$ be a canonical graph for $M \del e$.  
Then $|V(G)|=r$ (we may assume $M \del e$ is non-graphic, since we may assume $M$ is not an excluded minor for the class of graphic matroids).  
Since $M$ has no line of length $k$, neither does $M \del e$.  
This implies $|E(G)| \leq (k-1) \cdot {r \choose 2}$.  
Hence $|E(M)| \leq (k-1) \cdot {r \choose 2} + 1$.  
There are only a finite number of matroids of rank $r$ on at most this number of elements. 
\end{proof} 

Similarly to how we found the bound of Corollary \ref{cor:boundonsizeexminFRAME}, we can bound the size of an excluded minor for the class of lifted-graphic matroids.  

\begin{cor} \label{cor:exminhasnolonglineLIFT}
Let $M$ be a rank-$r$ excluded minor for the class of lifted-graphic matroids.  
Then $M$ does not contain as a restriction a line of length greater than $6 r^{6r^2}$. 
\end{cor}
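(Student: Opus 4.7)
The plan is to mimic the argument of Corollary \ref{cor:exminhasnolonglineFRAME}, substituting the lift-specific count $n(r) = r^{6r^2}$ on the number of canonical graphs (Lemmas \ref{lem:numbercanongraphslotsalines} and \ref{lem:liftjustonelongline}) and exploiting the improved loop bound for lift graphs: by Lemma \ref{lem:liftmorethanonelongline}, a canonical lift graph of a simple matroid carries at most one loop, rather than the two that appear in the frame case.

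I would begin by supposing for contradiction that $M$ contains a line $l$ with $|l| > 6 r^{6r^2}$. For every $e \in l$ the matroid $M \del e$ is simple by Lemma \ref{lem:ex_min_simple_cosimple}, so Lemmas \ref{lem:numbercanongraphslotsalines} and \ref{lem:liftjustonelongline} provide at most $n(r) = r^{6r^2}$ canonical graphs for it. Since $|l| > 6 n(r)$, the pigeonhole principle yields seven elements $e_1, \ldots, e_7 \in l$ whose canonical graphs $G_1, \ldots, G_7$ for $M \del e_1, \ldots, M \del e_7$ agree outside $\{e_1, \ldots, e_7\}$ and become identical after an appropriate relabeling of $\{e_1, \ldots, e_7\}$.

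Next, following the frame template, I would form the auxiliary graph $H$ on vertex set $\{1, \ldots, 7\}$ by declaring $i$ adjacent to $j$ exactly when $e_i$ is a link in $G_j$ and $e_j$ is a link in $G_i$. In the complement $H^c$, the number of edges incident to a fixed $j$ is at most the number of indices $i$ with $e_i$ a loop in $G_j$; Lemma \ref{lem:liftmorethanonelongline} caps this at one per $j$, so $|E(H^c)| \leq 7$ and $|E(H)| \geq \binom{7}{2} - 7 = 14 > \frac{49}{4} = \frac{|V(H)|^2}{4}$. Tur\'an's theorem then produces a triangle $\{i, j, k\}$ in $H$, and relabeling the three corresponding elements as $e, f, g$ places us exactly in the situation set up in the proof of Theorem \ref{thm:exminhasnolonglineLIFT}: canonical graphs $G_e, G_f, G_g$ that agree on their common ground set, with $e, f, g$ appearing as links in each graph that contains them. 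The construction in that proof then produces a lift graph for $M$, contradicting the assumption that $M$ is an excluded minor.

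I do not anticipate any genuine obstacle, since the argument is parallel to the frame case. The one point worth verifying is that the improvement from at most two loops per graph (frame) to at most one (lift) shrinks the relabeling class required for Tur\'an to find a triangle from eleven elements to seven, which is precisely what yields the multiplicative constant $6$ in $6 r^{6r^2}$ in place of the $10$ in the frame analogue.
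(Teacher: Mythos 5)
Your proposal is correct and follows essentially the same argument as the paper: pigeonhole on the at most $n(r)=r^{6r^2}$ canonical graphs to get seven elements with identical canonical graphs, then the auxiliary graph and Tur\'an's theorem with the bound $|E(H^c)|\leq |V(H)|$ coming from the fact that a graph for a simple lifted-graphic matroid has at most one loop. The only cosmetic difference is that you cite Lemma \ref{lem:liftmorethanonelongline} for the one-loop bound, whereas the relevant fact is stated directly in the discussion at the start of Section \ref{sec:lift}; the substance is identical.
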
 

\begin{proof} 
Let $e \in E(M)$, and consider a graph $G$ for $M \del e$.  
By Lemmas \ref{lem:numbercanongraphslotsalines} and \ref{lem:liftjustonelongline}, there are less than $n(r) = r^{6r^2}$ canonical graphs for $M \del e$.  
Thus if $M$ has a line $l$ of length at least 
$6 r^{6r^2}$, 
then we are guaranteed that there are seven elements $e_1, \ldots, e_{7} \in l$ such that the canonical graphs $G_{i}$ for $M \del e_i$ ($i \in \{1, \ldots, 7\}$) are identical, up to relabelling of $e_1, \ldots, e_{7}$.   

Construct an auxiliary graph $H$ on vertex set $1, \ldots, 7$, in which $i$ is adjacent to $j$ if $e_i$ is a link in $G_j$ and $e_j$ is a link in $G_i$.  
The edge set of $H$ is complementary to the edge set of the graph $H^c$ on $1, \ldots, 7$, in which two vertices $i$ and $j$ are adjacent if either $e_i$ is a loop in $G_j$ or $e_j$ is a loop in $G_i$.  
Since no graph $G_i$ has more than one of $e_1, \ldots, e_7$ as loops, $|E(H^c)| \leq |V(H)|$, and so $|E(H)| \geq {|V(H)| \choose 2} - |V(H)|$. 
Since this is greater than $|V(H)|^2/4$ when $|V(H)|>6$, by Tur\'an's Theorem, $H$ contains a triangle.  
Let $i, j, k$ be the vertices of this triangle.  
Then, up to relabelling of $e_i, e_j, e_k$, $G_i = G_j = G_k$, and $e_i$, $e_j$, and $e_k$ are links in each of the  graphs in which they appear, as required in the proof of Theorem \ref{thm:exminhasnolonglineLIFT}.  
\end{proof} 

Substituting $k = 6 r^{6r^2}$ into the expression bounding $|E(M)|$ in the proof of Theorem \ref{thm:finitenumberexminfixedrankLIFT} gives a bound on the number of elements in an excluded minor for the class of lifted-graphic matroids: 

\begin{cor} \label{cor:boundonexminLIFT}
Let $M$ be a rank-$r$ excluded minor for the class of lifted-graphic matroids.  
Then 
$|E(M)| \leq (6r^{6r^2}-1){r \choose 2} + 1 < 3r^{6r^2+2}$. 
\end{cor}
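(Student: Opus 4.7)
The plan is essentially a direct substitution, mirroring the analogous calculation done immediately before Corollary \ref{cor:boundonsizeexminFRAME} in the frame case. The proof of Theorem \ref{thm:finitenumberexminfixedrankLIFT} already shows that if $k$ is any integer such that a rank-$r$ excluded minor $M$ for the class of lifted-graphic matroids contains no line of length $k$ as a restriction, then $|E(M)| \leq (k-1)\binom{r}{2} + 1$. Corollary \ref{cor:exminhasnolonglineLIFT} supplies such a $k$, namely $k = 6r^{6r^2}$.

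So the first step is to apply Corollary \ref{cor:exminhasnolonglineLIFT} to fix $k = 6r^{6r^2}$, observing that $M$ contains no line of length greater than $k$. Next I would recall the counting argument from the proof of Theorem \ref{thm:finitenumberexminfixedrankLIFT}: pick any $e \in E(M)$, take a canonical graph $G$ for $M \del e$ (which has $|V(G)| = r$ since we may assume $M \del e$ is non-graphic, as graphic matroids are lifted-graphic), and note that every pair of vertices spans at most $k-1$ parallel edges (else together with some unbalanced cycle they would create a line of length $\geq k$). This bounds $|E(G)| \leq (k-1)\binom{r}{2}$, and hence $|E(M)| \leq (k-1)\binom{r}{2} + 1$.

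Substituting $k = 6r^{6r^2}$ yields $|E(M)| \leq (6r^{6r^2}-1)\binom{r}{2} + 1$, which is the first inequality claimed. For the second, a short estimate suffices: since $\binom{r}{2} < r^2/2$, we have
\[
(6r^{6r^2}-1)\binom{r}{2} + 1 < 6r^{6r^2} \cdot \frac{r^2}{2} = 3r^{6r^2+2}.
\]
There is no real obstacle here; the only thing to verify carefully is the final algebraic bound, and that amounts to noting $\binom{r}{2} < r^2/2$ together with $(6r^{6r^2}-1)\binom{r}{2} + 1 \leq 6r^{6r^2}\binom{r}{2} < 3r^{6r^2+2}$ for all positive integers $r$.
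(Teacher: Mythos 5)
Your proof is correct and is essentially identical to the paper's: the authors likewise obtain the bound by substituting the value of $k$ supplied by Corollary \ref{cor:exminhasnolonglineLIFT} into the inequality $|E(M)| \leq (k-1)\binom{r}{2}+1$ established in the proof of Theorem \ref{thm:finitenumberexminfixedrankLIFT}, and your closing estimate via $\binom{r}{2} < r^2/2$ is the intended one. (Whether ``no line of length greater than $6r^{6r^2}$'' yields $k=6r^{6r^2}$ or $6r^{6r^2}+1$ is a harmless off-by-one already present in the paper and absorbed by the final bound $<3r^{6r^2+2}$.)
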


\section{Quasi-graphic matroids} \label{sec:quasi} 

We proceed in this section analogously to Sections \ref{sec:frame} and \ref{sec:lift}. 
We bound the number of ways the elements in the fixing set may appear in a graph for a quasi-graphic matroid, then bound the number of canonical graphs for a quasi-graphic matroid in terms of its rank, for a suitable definition of \emph{canonical graph}. 
We then show that as a consequence there is a bound, in terms of rank, on the length of a line in an excluded minor. 
We will not recount here all of the properties of quasi-graphic matroids and their graphs that we require.  All basic properties and facts we use are found in \cite{Quasi}. 

Let $M$ be a simple, cosimple, and connected quasi-graphic matroid of rank $r$, and let $G$ be a graph for $M$.  
Let $c(G)$ denote the number of components of $G$. 
The number of edges in a spanning forest of $G$ is $|V(G)| - c(G)$. 
Since the edge set of a forest is independent in $M$ \cite[Lemma 2.5]{Quasi}, this number is a lower bound on the rank of $M$.  
However, the number $|V(G)|-c(G)$ tells us nothing about the number of components of $G$ that consist of just a single vertex with a single incident loop.  
Let us call a component of $G$ that consists of just a single vertex with a single incident loop an \emph{isolated loop}. 
Let $\loops(G)$ denote the set of isolated loops of $G$, let $\comp(G)$ denote the set of components of $G$ that are not isolated loops, and let $v(G) = |V(G)|-|\loops(G)|$.  

\begin{lem} \label{lem:QGverticesintermsofrank} 
Let $M$ be a simple rank-$r$ quasi-graphic matroid, and let $G$ be a graph for $M$ with no isolated vertex. 
Then 
$|\loops(G)| \leq r$, 
$|\comp(G)| \leq r$, and 
$|V(G)| \leq 2 r$.
\end{lem}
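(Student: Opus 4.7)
The plan is to produce a single independent set in $M$ whose size relates $v(G)$, $|\comp(G)|$ and $|\loops(G)|$, and then combine this with a lower bound on $v(G)$ in terms of $|\comp(G)|$. Specifically, I intend to take a spanning forest $T$ of the subgraph $\bigcup \comp(G)$ and augment it by \emph{every} edge of $\loops(G)$; this candidate set $F = T \cup \loops(G)$ has size $v(G) - |\comp(G)| + |\loops(G)|$, so once its independence is established all three conclusions will drop out by arithmetic.

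First I would observe that, because $M$ is simple, no vertex of $G$ carries two loops (two loops at a common vertex form tight handcuffs, hence a $2$-circuit of $M$), and no component consists of a single vertex with no edges (excluded by hypothesis). Consequently every component in $\comp(G)$ has at least two vertices, giving $v(G) \ge 2|\comp(G)|$.

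The main step is showing that $F = T \cup \loops(G)$ is independent in $M$. Using that the edge set of any forest is independent (Lemma~2.5 of \cite{Quasi}, quoted in the preliminaries), I suppose for contradiction that $F$ contains a circuit $C$, and split $C$ into its intersection $L$ with $\loops(G)$ and its intersection $T'$ with $T$. Each element of $L$ is a separate component of $G[C]$, while the edges of $T'$ contribute additional components, so framework axiom (4) forces $|L| + (\text{\#components of } G[T']) \le 2$. A short case analysis then eliminates every possibility: two or more isolated loops plus a forest edge give $\ge 3$ components; three isolated loops alone give $3$ components; two isolated loops alone would be a $2$-circuit, ruled out by simplicity; one isolated loop plus forest edges places only one unbalanced cycle inside a subgraph of a forest, which cannot be any of the circuit types (balanced cycle, all-unbalanced theta, tight/loose handcuffs, two disjoint unbalanced cycles); and the remaining cases reduce to a lone non-loop element or a subset of a forest, neither a circuit. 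Hence $|F| \le r$, i.e.
\[
v(G) - |\comp(G)| + |\loops(G)| \le r. \tag{$\ast$}
\]

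With $(\ast)$ in hand the three bounds fall out quickly. Since $T$ alone is independent, $v(G) - |\comp(G)| \le r$, and combining with $v(G) \ge 2|\comp(G)|$ gives $|\comp(G)| \le r$. From $(\ast)$ and $v(G) \ge 2|\comp(G)|$,
\[
|\loops(G)| \le r - v(G) + |\comp(G)| \le r - |\comp(G)| \le r.
\]
Finally $|V(G)| = v(G) + |\loops(G)| = |F| + |\comp(G)| \le r + |\comp(G)| \le 2r$. The only nontrivial part is the case analysis for independence of $F$; everything else is routine. I expect that step to be the main obstacle because it is the only place where the specific list of quasi-graphic circuit types and the framework axiom~(4) interact nontrivially, and I need to be careful that every way a circuit could use an isolated loop is genuinely excluded.
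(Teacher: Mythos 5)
Your proof is correct and takes essentially the same route as the paper: both hinge on the observation that a spanning forest together with all isolated loops is independent, yielding $v(G)-|\comp(G)|+|\loops(G)|\le r$, from which the three bounds follow by arithmetic. The only minor differences are that the paper gets $|\loops(G)|\le r$ directly from independence of the isolated loops and $|\comp(G)|\le r$ from an independent set of one link per component of $\comp(G)$, whereas you deduce both from the same inequality together with $v(G)\ge 2|\comp(G)|$, and your case analysis simply spells out in detail the paper's one-line justification of the key independence claim.
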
 

\begin{proof} 
Since $M$ is simple and the number of components induced by a circuit is at most two, the set of isolated loops of $G$ forms an independent set. 
Therefore $|\loops(G)| \leq r$, and adding $\loops(G)$ to any forest yields an independent set.  
The number of edges in a maximal forest in $G$ is $v(G) - |\comp(G)|$, so 
$r \geq v(G) - |\comp(G)| + |\loops(G)|$.  
That is, $v(G) + |\loops(G)| \leq r + |\comp(G)|$. 
Since a set consisting of a single link from each component of $G$ that is not an isolated loop is independent, $|\comp(G)| \leq r$.  
Hence $|V(G)| = v(G) + |\loops(G)| \leq 2 r$.  
\end{proof} 

Let $M$ be a simple quasi-graphic matroid. 
Define a \emph{canonical} graph for $M$ to be a graph $G$ for $M$ satisfying: 
\begin{itemize} 
\item  $|V(G)| = 2r$; 
\item  for each long line $l$, denoting by $u_l$, $v_l$ the endpoints of $l$ in $G$, for each element $e \in l$, 
\begin{itemize} 
\item[\textbf{(CG1)}] if $e \in \cl(E(G-u_l))$ and $e \in \cl(E(G-v_l))$, then $e$ is an isolated loop; 
\item[\textbf{(CG2)}] if $e \in \cl(E(G-u_l))$ and $e \notin \cl(E(G-v_l))$, then $e$ is a loop incident to $v_l$; 
\item[\textbf{(CG3)}] if $e \notin \cl(E(G-u_l))$ and $e \in \cl(E(G-v_l))$, then $e$ is a loop incident to $u_l$; and 
\item[\textbf{(CG4)}] if $e \notin \cl(E(G-u_l))$ and $e \notin \cl(E(G-v_l))$, then $e$ is a $u_l$-$v_l$ link. 
\end{itemize}
\end{itemize} 

If $G$ is a graph for a simple quasi-graphic matroid $M$, then, as we show in the next lemma, we may obtain a canonical graph $G'$ for $M$ by adding isolated vertices to bring the total number of vertices to $2r$, and then, for each long line $l$ of $M$, replacing any edge representing an element $e \in l$ that does not respect our four conditions with one that does. 

\begin{lem} 
Let $G$ be a graph for a simple quasi-graphic matroid $M$, and let $G'$ be constructed as above from $G$.  Then $G'$ is a graph for $M$.  
\end{lem}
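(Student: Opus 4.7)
The goal is to verify that $G'$ satisfies the four framework axioms. The construction has two stages: first, add isolated vertices until $|V|=2r$; second, reposition each edge $e$ in a long line so that conditions (CG1)--(CG4) hold. Stage one is harmless (isolated vertices affect no closure, component rank, or circuit-induced subgraph), so all content lies in stage two. I would handle stage two one edge at a time, exploiting that the moves commute: distinct moves touch distinct edges and distinct source vertices, since simplicity of $M$ implies each vertex carries at most one loop (else a two-element tight-handcuff circuit).

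\textbf{Reducing to a single type of move.} A short case analysis on the current placement of $e\in l$ shows the only repositioning ever needed is to move a loop to a fresh isolated vertex. If $e$ is a link with endpoints $u_l,v_l$ in $G$, framework axiom (3) at $u_l$ forces $e\notin\cl_M(E(G-u_l))$ (since $e\notin E(G-u_l)\cup\loops_G(u_l)$), and likewise at $v_l$, so (CG4) already holds. If $e$ is a loop at $u_l$ (or $v_l$), automatically $e\in\cl_M(E(G-v_l))$ (resp.\ $\cl_M(E(G-u_l))$); either (CG3) (resp.\ (CG2)) already holds, or $e\in\cl_M(E(G-u_l))$ and (CG1) demands moving $e$ to an isolated vertex. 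If $e$ is a loop at some $w\notin\{u_l,v_l\}$, then $e$ belongs to both closures, placing it in (CG1); moreover $e\in\cl_M(E(G-w))$, since $l\setminus\{e\}\subseteq E(G-w)$ contains at least five elements of the rank-$2$ flat $l$, any two of which span $e$.

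\textbf{Verifying the move, and the main obstacle.} It remains to show that moving a loop $e$ from $w$ (with $e\in\cl_M(E(G-w))$) to a fresh isolated vertex $w_0$ preserves the framework axioms. Axioms (1) and (2) are immediate: the new component $(\{w_0\},\{e\})$ has rank $1\le 1$ (as $M$ is simple, $e$ is not an $M$-loop), and the old component only loses a loop. For axiom (3) at $w$,
\[
\cl_M(E(G'-w))=\cl_M(E(G-w)\cup\{e\})=\cl_M(E(G-w))\subseteq E(G-w)\cup\loops_G(w)=E(G'-w)\cup\loops_{G'}(w),
\]
using the hypothesis $e\in\cl_M(E(G-w))$; at $w_0$ the condition is trivial, and at other vertices the relevant sets do not change. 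The delicate point, and the main obstacle, is axiom (4): prising a loop off into its own component could in principle inflate the number of components of $G'[E(C)]$ from two to three. I would invoke \cite[Lemma~3.3]{Quasi}: a circuit $C$ of $M$ containing the loop $e$ cannot be a balanced cycle or a theta, so it is tight handcuffs, loose handcuffs, or a disjoint pair of unbalanced cycles. Since $e$ is the unique loop at $w$ (by simplicity), a short case split across these three circuit shapes confirms that either $w$ was already alone in its component of $G[E(C)]$ apart from $e$ (so the move just relocates that singleton-with-loop intact), or $w$ lay on a cycle of $C\setminus\{e\}$ through $w$ (so removing $e$ leaves $w$ still attached to that cycle, and $(\{w_0\},\{e\})$ is the only new component). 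Hence $G'[E(C)]$ has at most two components, and axiom (4) is preserved.
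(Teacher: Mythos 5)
Your reduction to a single type of move and your verification of axioms (1)--(3) are sound, and your overall route is genuinely different from the paper's at the crucial step. The paper carries out all replacements simultaneously and must then confront the one configuration in which condition (4) could fail: a circuit consisting of two loops, both relocated to isolated vertices, joined by a nonempty path; it eliminates this configuration by a circuit-elimination argument that manufactures a two-element circuit, contradicting simplicity. You instead relocate one loop at a time, and a single relocation can never push a circuit's induced subgraph to three components, so the problematic configuration never has to be faced directly. This is a legitimate and arguably cleaner route, but two points need repair or emphasis. First, your case split for axiom (4) is not exhaustive: in the loose-handcuffs case the vertex $w$ carrying the loop is joined to the other cycle by a path of positive length, so $w$ is neither ``alone apart from $e$'' nor on a cycle of $C\setminus\{e\}$. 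The conclusion still holds, because in all three admissible shapes (tight handcuffs, loose handcuffs, disjoint pair of unbalanced cycles) the set $C\setminus\{e\}$ induces a connected subgraph, so after the move $G'[C]$ has exactly two components; it is better to argue it in that form. Second, the iteration needs to be made explicit: you must record that each intermediate graph is again a framework for $M$ (this is what entitles you to invoke \cite[Lemma 3.3]{Quasi} for the circuit shapes at the next move), and that the closure hypotheses of later moves persist, which they do because an earlier move leaves $E(\,\cdot\,-x)$ unchanged for every vertex $x$ other than the one it vacates, and enlarges it there only by an element already in its closure. This inductive bookkeeping is exactly what replaces the paper's circuit-elimination argument, so it should not be dismissed as ``the moves commute.''

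A smaller caution: your justification that simplicity forces at most one loop per vertex tacitly uses the converse of Lemma 3.3 (in a general framework a tight-handcuff-shaped subgraph need not be a circuit, unlike in a frame graph), so that inference is not available as stated; fortunately nothing in your argument genuinely requires this uniqueness, since the axiom-(3) computation and the per-move analysis go through verbatim when $w$ carries further loops.
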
 

\begin{proof} 
We check the four conditions $G'$ must satisfy if it is to be a framework for $M$. 

(1) We have not changed the edge set, so $E(G') = E(M)$. 

(2) Each component of $G$ that does not contain a long line is unchanged and remains a component of $G'$.  
Every component of $G$ that contains a pair of endpoints of a long line remains a component of $G'$, possibly after losing some elements that have become isolated loops in $G'$ or gaining some elements that are now loops incident to an endpoint of the long line containing them. 
Thus we have changed neither the number of vertices nor rank of the edge set of any component that contains the endpoints of a long line.  
Clearly, a component $H$ of $G'$ consisting of an isolated vertex or an isolated loop satisfies $r(E(H)) \leq |V(H)|$.  

(3) If $e \in l$ is a $u$-$v$ link in $G$, then $e$ is in neither $\cl(E(G-u))$ nor $\cl(E(G-v))$, for otherwise $G$ would violate condition (3) for frameworks. 
Hence every $u$-$v$ link of $G$ remains a $u$-$v$ link in $G'$.  
We obtain our canonical graph $G'$ from $G$ just by possibly adding isolated vertices, then rearranging the incidence of loops.  
Thus (3) certainly remains satisfied. 

(4) Suppose $C$ is a circuit of $M$ and that $G'[C]$ has more than two components.  Since $G'$ is obtained from $G$ only by rearranging loops contained in long lines, it must be the case that at least one component of $G'[C]$ is a loop $e$. 
Hence $G[C]$ is a pair of handcuffs, tight or loose, or a pair of vertex disjoint unbalanced cycles.  
Whichever the case, one of the cycles in this subgraph of $G$ is the loop $e$. 
Suppose $G[C]$ is a pair of tight handcuffs or a pair of vertex disjoint unbalanced cycles, say consisting of the two cycles $C_1$ and $e$.  
Then $G'[C]$ also consists of $C_1 \cup e$, and so has at most two components.  
So $G[C]$ must be a pair of loose handcuffs, say consisting of the cycle $C_1$, the path $P$, and $e$, where $P$ contains at least one edge. 
Since $G'[C]$ has more than two components, $C_1$ must also be a loop, let us call it $f$, whose incidence has been redefined in $G'$.  
Thus $G[C]$ consists of the pair of loops $e, f$ together with the path $P$ linking them. 
Since $e$ and $f$ were both replaced with isolated loops in $G'$, each satisfies \textbf{(CG1)} for their respective lines.  

Let $u_{l_1}$ be the endpoint of the long line $l_1$ to which $e$ is incident in $G$, and let $v_{l_1}$ be the other endpoint of $l_1$ in $G$. 
Let $u_{l_2}$ be the endpoint of the long line $l_2$ to which $f$ is incident in $G$, and let $v_{l_2}$ be the other endpoint of $l_2$ in $G$. 
(Note that we allow $l_1 = l_2$, in which case $u_{l_1}=v_{l_2}$ and $v_{l_1}=u_{l_2}$.) 
Since $e \in \cl(E(G-u_{l_1}))$, there is a cycle $C_2 \subseteq E(G-u_{l_1})$ such  that $C_2 \cup e$ is a circuit. 
If $C_2$ can be chosen such that $C_2$ either avoids $P$ or meets $P$ just at $u_{l_2}$, then do so. 
If not, choose a minimal $u_{l_1}$-$C_2$ path $Q_1$, a minimal $u_{l_2}$-$C_2$ path $Q_2$, and a subpath $Q_3$ of $C_2$ such that $Q_1 \cup Q_2 \cup Q_3$ is a $u_{l_1}$-$u_{l_2}$ path, and redefine $P$ to be this path. 
Thus $e \cup P \cup f$ is a circuit of $M$ and $P$ either avoids $C_2$ or meets $C_2$ precisely in the path $Q_3$. 

By the strong circuit elimination axiom, there is a circuit $D$ containing $f$, such that $D \subseteq (e \cup P \cup f) \cup C_2 - e$. 
The subgraph $G[D]$ is contained in $G[P \cup f \cup C_2]$, and so consists of either $C_2 \cup f$ or $C_2 \cup f \cup Q_2$. 
Now choose an edge $g \in C_2$, and apply circuit elimination to the circuits $e \cup C_2$ and $D$: there is a circuit $D_1$ contained in $e \cup C_2 \cup D - g$. 
Since the only cycles in $e \cup C_2 \cup D - g$ are $e$ and $f$, this implies $e \cup f$ is a circuit.  
But this contradicts the fact that $M$ is simple. \qedhere
\end{proof} 

We now place a bound on the number of canonical graphs for $M$.  

\begin{lem} \label{lem:boundgraphsforquasi} 
Let $M$ be a simple rank-$r$ quasi-graphic matroid.  
The number of canonical graphs for $M$ is less than
${2r \choose 2}^{{2r \choose 2}} \cdot \left({2r \choose 2} + 2r \right)^{5 {2r \choose 2}} < (3r^2)^{12r^2}$.
\end{lem}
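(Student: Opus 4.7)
The plan is to follow the template established by Lemmas \ref{lem:boundoncanonicalrepsFRAME}, \ref{lem:numbercanongraphslotsalines}, and \ref{lem:liftjustonelongline}: bound the number of canonical graphs by counting the choices for a fixing graph, then counting the placements of elements not in the fixing set, and finally arguing that the elements of the fixing set are forced by these choices together with $M$ itself.

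First, by Lemma \ref{lem:QGverticesintermsofrank} and the very definition of a canonical graph, any canonical graph $G$ for $M$ has exactly $2r$ vertices; fix such a vertex set. Each long line $l$ of $M$ is associated with an unordered pair of endpoints $\{u_l,v_l\}$ in $G$, and no two distinct long lines share the same pair of endpoints (two lines sharing both endpoints would have rank-$2$ union, contradicting distinctness). Since there are at most ${2r \choose 2}$ such endpoint pairs, there are at most ${2r \choose 2}$ long lines, and the number of ways to assign an endpoint pair to each long line is at most ${2r \choose 2}^{{2r \choose 2}}$; this choice determines the fixing graph of $G$.

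Next, I would place the elements not contained in any long line. Every such element is represented either as a link whose pair of endpoints is \emph{not} the endpoint pair of a long line (otherwise it would lie in that long line, since $M$ is simple and long lines are flats), or as a loop at one of the $2r$ vertices. Moreover, at most $5$ such elements can share a common endpoint pair, since $6$ edges with the same endpoint pair would form a long line. The number of elements outside the fixing set is therefore at most $5{2r \choose 2}$, and each has at most ${2r \choose 2} + 2r$ possible positions, giving at most $\bigl({2r \choose 2} + 2r\bigr)^{5{2r \choose 2}}$ configurations.

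Finally, I would argue that once the fixing graph and all edges outside the fixing set are placed, conditions \textbf{(CG1)}--\textbf{(CG4)} determine the position of each element $e$ of each long line $l$: its placement (isolated loop, loop at $u_l$, loop at $v_l$, or $u_l$--$v_l$ link) is completely specified by membership of $e$ in the matroid closures $\cl(E(G-u_l))$ and $\cl(E(G-v_l))$. Multiplying the two counts yields the claimed bound, and the final numerical estimate $(3r^2)^{12r^2}$ follows by absorbing the constants. The main obstacle is the last step: the sets $E(G-u_l)$ depend on which loops sit at $u_l$, so one must verify that framework property (3) (together with the fact that only elements of long lines at $u_l$ can be loops at $u_l$ in a canonical graph, and that the ``isolated loop'' case is handled separately from the endpoints) makes the closures computable from $M$ and the already-fixed data, and hence that \textbf{(CG1)}--\textbf{(CG4)} really do force a unique placement rather than yielding a consistency condition to be solved.
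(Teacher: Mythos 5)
Your counting is exactly the paper's: $2r$ vertices via Lemma \ref{lem:QGverticesintermsofrank}, at most ${2r \choose 2}^{{2r \choose 2}}$ placements of the fixing-graph edges, and at most $\bigl({2r \choose 2}+2r\bigr)^{5{2r \choose 2}}$ placements of the at most $5{2r \choose 2}$ elements outside the fixing set. But the step you defer --- showing that the data already chosen, together with $M$, forces the placement of every element of every long line --- is not a loose end; it is the substantive content of the paper's proof, and you have flagged it without closing it. The circularity you identify is real: \textbf{(CG1)}--\textbf{(CG4)} are phrased in terms of $\cl(E(G-u_l))$ and $\cl(E(G-v_l))$, where $G$ is the graph being built, and $E(G-u_l)$ depends on which elements of $l$ (and of other long lines sharing the endpoint $u_l$) end up as loops at $u_l$. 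So "read off the closures and apply \textbf{(CG1)}--\textbf{(CG4)}" is, as you say, a priori a system of simultaneous constraints, not an evaluation.

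The paper resolves this by never evaluating closures in $G$ at all. It first replaces each long line $l$ by four dummy links $e_l^1,\dots,e_l^4$ between $u_l$ and $v_l$, justifying (via circuit elimination) that a set $F$ not contained in a single long line is dependent in $M$ iff the corresponding set with each long line truncated to two dummies is dependent in the truncated matroid $M_H$; this makes the auxiliary graph $H$ a legitimate graph whose cycles can be interrogated. It then determines the placement of each $e\in l$ by a case analysis on circuits of $M$ of the form $C\cup e$ or $C\cup e\cup P$, where $C$ is a cycle of $H-\{u_l,v_l\}$ and $P$ a path to $u_l$ or $v_l$ --- conditions stated purely in terms of $M$ and the already-fixed graph $H$, with further subcases (their (4)(a)--(e) and (5)) handling the degenerate situations where no such independent $C\cup e$ exists, e.g.\ when $H\del D_l$ is balanced or every unbalanced cycle of $H\del D_l$ passes through an endpoint of $l$. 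Verifying in each case that the prescribed placement satisfies \textbf{(CG1)}--\textbf{(CG4)} and that cases (1) and (2) are mutually exclusive is where the work lies. Without some version of this argument your proof establishes only an upper bound of the stated quantity times the number of ways to place the fixing-set elements, which is exactly the factor the lemma is designed to eliminate.
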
 

\begin{proof} 
By Lemma \ref{lem:QGverticesintermsofrank}, the number of vertices of a graph for $M$ is at most $2r$. 
Thus every canonical graph for $M$ may be obtained from a graph $H$ constructed as follows. 
Let $V$ be a set of $2r$ vertices. 
We first place the edges of what will be the fixing graph of a graph for $M$. 
Let $X$ be the fixing set of $M$ 
and let $W$ be the collection of edges $\{e_l : l$ is a long line in $X\}$. 
Place each edge $e_l \in W$ as a link between a pair of vertices in $V$, such that no two edges in $W$ share the same pair of endpoints. 
There can be at most ${2r \choose 2}$ edges in $W$, so 
there are less than ${2r \choose 2}^{{2r \choose 2}}$ ways to place these edges. 

For elements not in $X$, we can place, for each pair $u,v$ of vertices in $V$ that are not already endpoints of an edge $e_l \in W$, at most five edges on $\{u,v\}$, as otherwise these elements would be contained in a long line 
(where at most two of these elements may be placed as loops). 
Hence there are at most $5 {2r \choose 2}$ elements not in $X$. 
Each may be placed as a link or a loop, so the number of ways these elements may be placed is less than $\left( {2r \choose 2} + 2r \right)^{5 {2r \choose 2}}$. 

A graph $G$ for $M$ may now be obtained from $H$ as follows. 
Having placed the edges in $W$, 
we have placed the fixing graph of $G$.  
We have also placed all elements not in $X$.  
Now 
for each $e_l \in W$, we replace $e_l$ with the elements contained in $l$.  
We accomplish this as follows. 

For each $l \in X$, let $u_l, v_l$ be the endpoints of $l$ in $H$, and let $e_l^1, e_l^2, e_l^3, e_l^4$ be four ``dummy'' edges, which will temporarily stand in for, or represent, the entire line $l$. 
Since $|l| \geq 6$, and there may be at most two elements of $l$ that are loops, in every graph for $M$, every long line has at least four elements represented as links between its endpoints.  
Moreover, lengthening a line in a quasi-graphic matroid beyond four points contributes nothing new to the graphical structure of the matroid: 
A new element $e$ added to the line is in a 3-circuit with two existing edges, say $f$ and $f'$, which in a graph for $M$ has the form of either a theta subgraph (if $e$ is added as a link) or handcuffs (if $e$ is added as a loop). 
By circuit elimination, a set of edges $D \subseteq E - l$ forms a dependent set with $\{e,f\}$ if and only if $D$ forms a dependent set with $\{e,f'\}$, if and only if $D$ forms a dependent set with $\{f,f'\}$, and $e$ may be placed as either a loop or link, as appropriate, so that the circuit contained in each dependent set appears in the graph as a subgraph of one of the forms required for circuits (a balanced cycle, a theta, tight handcuffs, loose handcuffs, or a pair of vertex disjoint cycles). 
(Note that adding points to a 3-point line can potentially change the graphical structure of a quasi-graphic matroid, since a 3-point line may be represented in a graph by a balanced 3-cycle, and no graph representation for a line with more than 3 points is compatible with the graphical structure of a 3-cycle.) 

So now replace each edge $e_l$ in $H$ with four dummy edges $e_l^1, e_l^2, e_l^3, e_l^4$, placed as links between the endpoints of $e_l$. 
By the previous paragraph, there is a quasi-graphic matroid $M_H$ obtained by replacing in $M$ each long line $l$ with its 4-point dummy line. 
And, by the previous paragraph, for any set of elements $F$ that is not contained in a single long line, $F$ is dependent in $M$ if and only if, for each long line $l$ that $F$ meets, removing the elements of $l$ and replacing them with a pair of dummy elements $e_l^1, e_l^2$, results in a dependent set in $M_H$. 

Next, we replace the dummy elements of $H$ for each long line with its elements in $X$. 
We do this as follows. 
For each $l \in X$, let $u_l, v_l$ be the endpoints of $l$ in $H$. 
For each $l \in X$, and each element $e \in l$, precisely one of the following situations holds. 

Either there is a cycle $C$ in $H -\{u_l,v_l\}$ such that $C \cup e$ is independent, or there is no such cycle. 
If there is a cycle $C$ in $H -\{u_l,v_l\}$ such that $C \cup e$ is independent, then precisely one of the following holds.  
\begin{enumerate} 
\item There exists a cycle $C$ in $H-\{u_l,v_l\}$ such that $C \cup e$ is independent, and there exists a $C$-$u_l$ path $P$ in $H-v_l$ such that $C \cup e \cup P$ is a circuit: place $e$ as a loop incident to $u_l$.  (Observe that $e$ is in the closure of $E(H-v_l)$.)  
\item There exists a cycle $C$ in $H-\{u_l,v_l\}$ such that $C \cup e$ is independent, and there exists a $C$-$v_l$ path $P$ in $H-u_l$ such that $C \cup e \cup P$ is a circuit: place $e$ as a loop incident to $v_l$.  (Observe that $e$ is in the closure of $E(H-u_l)$.) 
\item For every cycle $C$ in $H - \{u_l,v_l\}$ such that $C \cup e$ is independent, for every $C$-$u_l$ path $P$, $C \cup e \cup P$ is independent, and for every $C$-$v_l$ path $Q$, $C \cup e \cup Q$ is independent. 
Now consider two subcases. 
\begin{enumerate} 
\item There is a cycle $C'$ in $H-\{u_l, v_l\}$ such that $C' \cup e$ is a circuit: place $e$ as an isolated loop. (Observe that $e$ is in the closure of $E(H-u_l)$, and in the closure of $E(H-v_l)$.)
\item For every unbalanced cycle $C$ in $H-\{u_l,v_l\}$, $C \cup e$ is independent: place $e$ as a $u_l$-$v_l$ link. 
(Observe that $e$ is in neither the closure of $E(H-u_l)$ nor the closure of $E(H-v_l)$.)
\end{enumerate} 
\end{enumerate}
It is not hard to see that (1) and (2) cannot both hold, by applying the circuit elimination axiom to appropriate subgraphs of $H$
(thus in each case, $e$ is in the closure of just one endpoint of $l$). 

If there is no cycle $C$ in $H-\{u_l, v_l\}$ such that $C \cup e$ is independent, then precisely one of the following holds. 
\begin{enumerate} \setcounter{enumi}{3}
\item All cycles in $H-\{u_l,v_l\}$ are circuits. 
\item There is an unbalanced cycle in $H-\{u_l,v_l\}$, and for every unbalanced cycle $C$ in $H-\{u_l,v_l\}$, $C \cup e$ is a circuit 
\end{enumerate}
If (4) holds, then precisely one of the following holds. 
For each long line $l$, let $D_l = \{e_l^1, e_l^2, e_l^3, e_l^4\}$. 
As for cases (1), (2), and (3) (a) and (b) above, in each of the following cases, it is evident that the placement of $e$ satisfies the conditions \textbf{(CG1)}-\textbf{(CG4)}. 
\begin{enumerate}[(a)]
\item $H \del D_l$ is balanced. 
There is no cycle in $H \del D_l$ forming a circuit with $e$, nor any cycle $C$ together with a minimal $C$-$\{u_l,v_l\}$ path forming a circuit with $e$: place $e$ as a $u_l$-$v_l$ link. 
\item Every unbalanced cycle in $H \del D_l$ meets both $u_l$ and $v_l$: place $e$ as a $u_l$-$v_l$ link. (In this case, for every unbalanced cycle $C$ in $H \del D_l$, $C \cup e$ is dependent; placing $e$ as a $u_l$-$v_l$ link means $C \cup e$ either contains a balanced cycle or is a theta subgraph with all there cycle unbalanced.) 
\item Every unbalanced cycle in $H \del D_l$ meets $u_l$ but avoids $v_l$. If there is an unbalanced cycle $C$ with $C \cup e$ a circuit, place $e$ as a loop incident to $u_l$; otherwise place $e$ as a $u_l$-$v_l$ link. 
\item Every unbalanced cycle in $H \del D_l$ meets $v_l$ but avoids $u_l$. If there is an unbalanced cycle $C$ with $C \cup e$ a circuit, place $e$ as a loop incident to $v_l$; otherwise place $e$ as a $u_l$-$v_l$ link. 
\item Each of $H-u_l$ and $H-v_l$ contain an unbalanced cycle, say $D_u$ and $D_v$, respectively. 
\begin{enumerate}[(i)]
\item If both $D_u \cup e$ and $D_v \cup e$ are independent: place $e$ as a $u_l$-$v_l$ link. 
\item If $D_u \cup e$ is a circuit while $D_v \cup e$ is independent: place $e$ as a loop incident to $u_l$. 
\item If $D_u \cup e$ is independent while $D_v \cup e$ is a circuit: place $e$ as a loop incident to $v_l$. 
\item If both $D_u \cup e$ and $D_v \cup e$ are circuits: place $e$ as an isolated loop. 
\end{enumerate} 
\end{enumerate}
Finally, if (5) holds, then $e$ must a loop. 
This further implies that for any unbalanced cycle $D$ meeting $u_l$ or $v_l$, $D \cup e$ is a circuit, so $e$ could be a loop incident to $u_l$, $v_l$ or any other vertex. 
Place $e$ as an isolated loop. 

(Geometrically, the endpoints $u_l$ and $v_l$ of the long line $l$ may be thought of as two points at which the span of the points in $l$ meet the span of the elements of $M$ that are not in $l$. The points of $l$ that are $u_l$-$v_l$-links are the points of $M$ that are minimally in the span of $\{u_l, v_l\}$. Deleting an endpoint $u_l$ corresponds to removing all points of $l$ aside from possibly a point parallel to $v_l$. Asking if an element $e \in l$ is in the closure of the remaining elements, is asking whether $e$ is in the span of $(E(M) - l) \cup v_l$. The placement of $e$ according to (1)-(4), then, just places $e$ in the graph appropriately---according to the form circuits take in a graph---so that the graph so constructed is a graph for $M$.) 

By construction, the resulting graph $G$ is a canonical graph for $M$.  
Moreover, $G$ is determined by the placement of the edges in $W$ together with the subgraph $H[E-X]$. 
Thus the number of canonical graphs for $M$ is certainly bounded by the number of such graphs $H$ that may be constructed.  
This number is at most the number of ways to place the edges in $W$, times the number of ways to place the elements not in $X$.  
Using the upper bounds on these numbers established above, we see that this product is less than 
${2r \choose 2}^{{2r \choose 2}} \cdot \left({2r \choose 2} + 2r \right)^{5 {2r \choose 2}}$. 
\end{proof} 

We may now bound the length of line that may appear in an excluded minor. 
Put $n(r) = {2r \choose 2}^{{2r \choose 2}} \cdot \left({2r \choose 2} + 2r \right)^{5 {2r \choose 2}}$. 

\begin{thm} \label{thm:exminhasnolonglineQUASI}
Let $M$ be a rank-$r$ excluded minor for the class of quasi-graphic matroids.  
Then there is a positive integer $k$ such that $M$ does not contain a line of length $k$ as a restriction. 
\end{thm}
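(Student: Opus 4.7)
The plan mirrors the arguments used for Theorems \ref{thm:exminhasnolonglineFRAME} and \ref{thm:exminhasnolonglineLIFT}. I suppose for contradiction that for some fixed $r$, excluded minors of rank $r$ contain arbitrarily long lines, and fix such an $M$ with a line $l$ of length at least $K$, to be chosen in terms of $r$. For each $e \in l$, the matroid $M \del e$ is a simple quasi-graphic matroid, so by Lemma \ref{lem:boundgraphsforquasi} admits fewer than $n(r)$ canonical graphs. Pigeonholing yields a subset $L \subseteq l$ with $|L| \geq K/n(r)$ such that the canonical graphs $\{G_e\}_{e \in L}$ are pairwise identical up to a relabelling of the deleted element.

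Next, I would run the auxiliary-graph argument from Corollaries \ref{cor:exminhasnolonglineFRAME} and \ref{cor:exminhasnolonglineLIFT}. Let $H$ be the graph on vertex set $L$ with $ij \in E(H)$ iff $e_i$ is a link in $G_j$ and $e_j$ is a link in $G_i$. The key observation is that in any canonical graph for the simple matroid $M \del e_j$, the long line $l \setminus \{e_j\}$ contains at most one loop: two such loops, whether isolated, incident to $u_l$, or incident to $v_l$, would together form either tight handcuffs or a pair of vertex-disjoint unbalanced cycles, hence a $2$-element circuit, violating simplicity. Consequently every vertex of $H^c$ has degree at most $2$, so $|E(H)| \geq \binom{|L|}{2} - |L|$, which exceeds $|L|^2/4$ once $|L| \geq 7$. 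Tur\'an's theorem then furnishes a triangle in $H$, yielding three elements $e, f, g \in l$ and canonical graphs satisfying $G_e \del f, g = G_f \del e, g = G_g \del e, f$, with $e, f, g$ all appearing as $u_l$-$v_l$ links in the graphs containing them.

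Let $G$ be the graph obtained from $G_e$ by adding $e$ as a $u_l$-$v_l$ link parallel to $f$ and $g$, so that $G \del e = G_e$, $G \del f = G_f$, and $G \del g = G_g$. To finish I would verify the four framework conditions for $G$ relative to $M$. Conditions (1) and (2) are immediate, since the added element $e$ joins an existing parallel class and changes neither the vertex set nor the matroid rank of the containing component. For condition (4), any circuit $C$ of $M$ containing at most two of $\{e, f, g\}$ is a circuit of some $M \del x$ with $x \in \{e, f, g\} \setminus C$, so $G[C] = G_x[C]$ has at most two components; the remaining circuit $\{e, f, g\}$ appears in $G$ as three parallel $u_l$-$v_l$ links and is connected. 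For condition (3), at a vertex $v \notin \{u_l, v_l\}$ one has $E(G - v) = E(G_e - v) \cup \{e\}$ and $e \in \cl_M(\{f, g\}) \subseteq \cl_M(E(G - v))$, so combining the framework property of $G_e$ with the deletion-closure identity $\cl_{M \del e}(S) = \cl_M(S) \setminus \{e\}$ yields the required containment; at $v \in \{u_l, v_l\}$, the set $S = E(G - v) = E(G_e - v) = E(G_f - v)$ is the same in all three graphs, and intersecting the two containments $\cl_M(S) \subseteq S \cup \loops_G(v) \cup \{e\}$ and $\cl_M(S) \subseteq S \cup \loops_G(v) \cup \{f\}$ obtained from the frameworks $G_e$ and $G_f$ yields $\cl_M(S) \subseteq S \cup \loops_G(v)$, as required. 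The framework so produced contradicts $M$ being an excluded minor.

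The main obstacle is the verification of condition (3) at the endpoints $u_l, v_l$, which was unnecessary in the frame and lift cases because one could simply cite Zaslavsky's circuit characterisations. In the quasi-graphic setting it is the two-framework intersection trick that handles it cleanly, and this is precisely why we need Tur\'an to deliver a triangle rather than merely a pair.
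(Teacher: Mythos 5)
Your proposal follows the paper's argument in all essentials: pigeonhole over the at most $n(r)$ canonical graphs of $M \del e$ for $e \in l$ (Lemma \ref{lem:boundgraphsforquasi}), use the auxiliary-graph/Tur\'an step to extract $e,f,g \in l$ that are links in mutually agreeing canonical graphs, and add $e$ to $G_e$ in parallel with $f$ and $g$ to obtain a framework for $M$. Your explicit verification of the four framework conditions---in particular the intersection of the two containments coming from the frameworks $G_e$ and $G_f$ at the endpoints $u_l, v_l$---is a correct and welcome elaboration of the step the paper compresses into ``Then $G$ is a graph for $M$''.

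One intermediate claim is wrong, though it only affects constants. You assert that in a canonical graph for $M \del e_j$ the line $l \setminus \{e_j\}$ contains at most one loop, because two such loops ``would together form either tight handcuffs or a pair of vertex-disjoint unbalanced cycles, hence a $2$-element circuit, violating simplicity''. For a quasi-graphic matroid the circuits are \emph{among} the subgraph types listed in \cite[Lemma 3.3]{Quasi}, but not every subgraph of one of those types is a circuit: a pair of vertex-disjoint unbalanced cycles is a circuit in the lifted-graphic case but is independent in the frame case. Indeed, in a simple matroid any two distinct elements are independent, so two loops of $l$ lying at distinct vertices are never a circuit; the configuration with loops at both endpoints $u_l$ and $v_l$ is exactly what conditions \textbf{(CG2)} and \textbf{(CG3)} permit, and the paper's own description of $G[l]$ allows up to two loops per long line (the same-vertex case is the only one excluded by simplicity, which is how the paper gets its bound of at most one loop per vertex, hence at most $2r$ loops per graph). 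The repair is immediate: with at most two (or, as in Corollary \ref{cor:exminhasnolonglineQUASI}, at most $2r$) of the chosen elements appearing as loops in any one graph, one has $|E(H^c)| \leq 2|L|$ (resp.\ $2r|L|$), so Tur\'an still yields a triangle once $|L| \geq 11$ (resp.\ $|L| \geq 8r+3$); since the theorem only asserts the existence of some $k$ depending on $r$, the conclusion is unaffected.
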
 

\begin{proof} 
Suppose to the contrary that for some fixed $r$, for every positive integer $k$, there is an excluded minor of rank $r$ containing a line of length at least $k$.  
In particular then, there is an excluded minor $M$ with a line $l$ sufficiently long for the following argument to hold.  

For each $e \in l$, consider the canonical graphs for $M \del e$.  
By Lemma \ref{lem:ex_min_simple_cosimple_QUASI}, $M$ is simple, so $M \del e$ is simple. 
Since for each $e \in l$ there are, by Lemma \ref{lem:boundgraphsforquasi}, at most $n(r)$ canonical graphs for $M \del e$, we may assume $l$ is sufficiently long that there are three elements $e, f, g \in l$, such that there are graphs $G_e$, $G_f$, and $G_g$, for $M \del e$, $M \del f$, and $M \del g$, respectively, such that 
$G_e \del f,g = G_f \del e,g = G_g \del e,f$, and such that $e$, $f$, and $g$ are links in each of the graphs containing them.  
Let $G$ be the graph obtained by adding $e$ to $G_e$ in parallel with edges $f$ and $g$.  
Then $G$ is a graph for $M$, a contradiction.   
\end{proof} 

\begin{proof}[Proof of Theorem \ref{thm:finitenumberexminfixedrankQUASI}] 
Let $r$ be a positive integer, and let $M$ be an excluded minor of rank $r$ for the class of quasi-graphic matroids.  
By Theorem \ref{thm:exminhasnolonglineQUASI}, there is an integer $k$ such that $M$ does not contain a line of length $k$ as a restriction.  
Arbitrarily choose an element $e \in E(M)$, and let $G$ be a graph for $M \del e$.  
By Lemma \ref{lem:QGverticesintermsofrank}, $|V(G)| \leq 2r$. 
Since $M$ has no line of length $k$, neither does $M \del e$.  
This implies $|E(G)| \leq (k-1) \cdot {2r \choose 2}$.  
Hence $|E(M)| \leq (k-1) \cdot {2r \choose 2} + 1$.  
There are only a finite number of matroids of rank $r$ on at most this number of elements. 
\end{proof} 

As for excluded minors for the classes of frame and lifted-graphic matroids, it is now not difficult to establish a bound on the size, in terms of rank, of an excluded minor for the class of quasi-graphic matroids.  

\begin{cor} \label{cor:exminhasnolonglineQUASI}
Let $M$ be a rank-$r$ excluded minor for the class of quasi-graphic matroids.  
Then $M$ does not contain as a restriction a line of length greater than 
$(8r+2) \cdot n(r) < (3r^2)^{12r^2+1}$. 
\end{cor}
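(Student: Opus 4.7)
The plan is to follow the two-step strategy used for Corollaries~\ref{cor:exminhasnolonglineFRAME} and \ref{cor:exminhasnolonglineLIFT}: first apply Lemma~\ref{lem:boundgraphsforquasi} and the pigeonhole principle to extract many elements of the hypothetical long line $l$ whose canonical graph representations agree, then use a Tur\'an-style argument to select three of these elements that are links in each of the three associated graphs, yielding a contradiction via the construction in the proof of Theorem~\ref{thm:exminhasnolonglineQUASI}.

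In detail, suppose for contradiction that $M$ contains a line $l$ with $|l| > (8r+2)\cdot n(r)$. Since Lemma~\ref{lem:boundgraphsforquasi} bounds the number of canonical graphs for $M\del e$ by $n(r)$ for each $e \in l$, pigeonhole produces elements $e_1,\ldots,e_{8r+3} \in l$ whose canonical graphs $G_i$ for $M\del e_i$ all agree after relabeling of $\{e_1,\ldots,e_{8r+3}\}$. Form the auxiliary graph $H$ on vertex set $\{1,\ldots,8r+3\}$ with $ij \in E(H)$ iff $e_i$ is a link in $G_j$ and $e_j$ is a link in $G_i$; edges of the complement $H^c$ then correspond to pairs where at least one of $e_i, e_j$ is a loop in the other's graph.

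The heart of the argument is an upper bound on $|E(H^c)|$. Using $|V(G_i)| \le 2r$ from Lemma~\ref{lem:QGverticesintermsofrank} together with the constraints on how long lines appear in graphs for simple quasi-graphic matroids (summarised in the preliminaries), the number of elements among $\{e_1,\ldots,e_{8r+3}\}\setminus\{e_i\}$ appearing as loops in a single $G_i$ is at most $2r$, giving $|E(H^c)| \le 2r(8r+3)$. A short calculation then gives $|E(H)| \ge \binom{8r+3}{2} - 2r(8r+3) = (8r+3)(2r+1) > (8r+3)^2/4$, so Tur\'an's theorem produces a triangle $\{i,j,k\}$ in $H$. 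Consequently $G_i = G_j = G_k$ after relabeling and $e_i, e_j, e_k$ are each links in every graph containing them, so adding $e_i$ to $G_i$ in parallel with $e_j$ and $e_k$, as in the proof of Theorem~\ref{thm:exminhasnolonglineQUASI}, produces a graph for $M$, contradicting that $M$ is an excluded minor. The numerical bound $(8r+2)\cdot n(r) < (3r^2)^{12r^2+1}$ then follows by substituting the estimate on $n(r)$ from Lemma~\ref{lem:boundgraphsforquasi}.

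The main obstacle I anticipate is the loop-counting estimate: the preliminaries already restrict $G[l]$ to configurations with at most two loops per long line, which would seem to yield a much tighter Tur\'an bound, so care must be taken to see which bound on loops in $G_i$ is actually needed, especially given that the canonical graph definition permits isolated loops (condition \textbf{(CG1)}) which are potentially spread across many vertices in the quasi-graphic setting.
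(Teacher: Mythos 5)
Your proposal matches the paper's proof essentially step for step: the same pigeonhole extraction of $8r+3$ elements with a common canonical graph (via Lemma~\ref{lem:boundgraphsforquasi}), the same auxiliary graph $H$ and complement $H^c$ with the bound $|E(H^c)| \leq 2r|V(H)|$ coming from at most $2r$ loops per graph, the same Tur\'an-type computation $(8r+3)(2r+1) > (8r+3)^2/4$ yielding a triangle, and the same conclusion via the construction in Theorem~\ref{thm:exminhasnolonglineQUASI}. It is correct and not a genuinely different route, so there is nothing further to flag.
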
 

\begin{proof} 
Let $e \in E(M)$, and consider a graph $G$ for $M \del e$.  
By Lemma \ref{lem:boundgraphsforquasi}, there are less than 
$n(r)$ 
graphs for $M \del e$. 
Hence if $M$ has a line $l$ of length at least 
$(8r+2) \cdot n(r)$, then it is guaranteed that there are $8r+3$ elements $e_1, \ldots, e_{8r+3} \in l$ such that the graphs $G_{i}$ for $M \del e_i$ ($i \in \{1, \ldots, 8r+3\}$) are identical, up to relabelling of $e_1, \ldots, e_{8r+3}$.   

Construct an auxiliary graph $H$ on vertex set $1, \ldots, 8r+3$, in which $i$ is adjacent to $j$ if $e_i$ is a link in $G_j$ and $e_j$ is a link in $G_i$.  
The edge set of $H$ is complementary to the edge set of the graph $H^c$ on $1, \ldots, 8r+3$, in which two vertices $i$ and $j$ are adjacent if either $e_i$ is a loop in $G_j$ or $e_j$ is a loop in $G_i$.  
Each graph $G_i$ has at most $2r$ vertices, and $M \del e$ is simple, so each graph $G_i$ has at most $2r$ loops.  
Since no graph $G_i$ has more than $2r$ of $e_1, \ldots, e_{8r+3}$ as loops, $|E(H^c)| \leq 2r|V(H)|$, and so $|E(H)| \geq {|V(H)| \choose 2} - 2r|V(H)|$. 
Since this is greater than $|V(H)|^2/4$ when $|V(H)|>8r+2$, by Tur\'an's Theorem, $H$ contains a triangle.  
Let $i, j, k$ be the vertices of this triangle.  
Then $G_i = G_j = G_k$ and $e_i$, $e_j$, and $e_k$ are links in each of the  graphs in which they appear, as required in the proof of Theorem \ref{thm:exminhasnolonglineQUASI}.  
\end{proof} 

Substituting 
$k = (8r+2) \cdot n(r)$ 
into the expression bounding $|E(M)|$ in the proof of Theorem \ref{thm:finitenumberexminfixedrankQUASI} gives a bound on the number of elements in an excluded minor for the class of quasi-graphic matroids: 

\begin{cor} \label{cor:boundonexminQUASI}
Let $M$ be a rank-$r$ excluded minor for the class of quasi-graphic matroids.  
Then 
$|E(M)| < (8r+2) n(r) {2r \choose 2} < (3r^2)^{12r^2+2}$. 
\end{cor}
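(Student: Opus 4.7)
The plan is to simply substitute the line-length bound from Corollary \ref{cor:exminhasnolonglineQUASI} into the estimate $|E(M)| \leq (k-1){2r \choose 2} + 1$ obtained in the proof of Theorem \ref{thm:finitenumberexminfixedrankQUASI}, and then verify that the product is dominated by $(3r^2)^{12r^2+2}$.

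Concretely, first I would recall that in the proof of Theorem \ref{thm:finitenumberexminfixedrankQUASI}, after deleting an arbitrary element $e \in E(M)$ from a rank-$r$ excluded minor, a graph $G$ for $M \del e$ has $|V(G)| \leq 2r$ by Lemma \ref{lem:QGverticesintermsofrank}, and between any two vertices at most $k-1$ edges can appear (since $M \del e$ has no line of length $k$), yielding $|E(M)| \leq (k-1){2r \choose 2} + 1$. Corollary \ref{cor:exminhasnolonglineQUASI} permits us to take $k = (8r+2) \cdot n(r)$, so
\[
|E(M)| \leq \left((8r+2)\cdot n(r) - 1\right){2r \choose 2} + 1 < (8r+2)\cdot n(r)\cdot {2r \choose 2},
\]
which is the first inequality in the statement.

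For the second inequality, I would use the bound $n(r) < (3r^2)^{12r^2}$ from Lemma \ref{lem:boundgraphsforquasi} and verify the routine estimate $(8r+2){2r \choose 2} < (3r^2)^2$ (for all $r \geq 2$ this follows from $(8r+2){2r \choose 2} < 9r \cdot 2r^2 = 18r^3 \leq 9r^4$; the case $r=1$ is vacuous because a connected simple cosimple matroid of rank $1$ consists of a single element and is trivially quasi-graphic). Multiplying then gives
\[
(8r+2)\cdot n(r)\cdot {2r \choose 2} < (3r^2)^2 \cdot (3r^2)^{12r^2} = (3r^2)^{12r^2+2},
\]
as required.

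There is no real obstacle here: the substance of the argument lies in Lemma \ref{lem:boundgraphsforquasi} and Corollary \ref{cor:exminhasnolonglineQUASI}, and the present corollary is just an arithmetic consolidation of those bounds.
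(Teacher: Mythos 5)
Your proof is correct and is essentially identical to the paper's, which likewise just substitutes $k = (8r+2)\,n(r)$ (as justified by the proof of Corollary \ref{cor:exminhasnolonglineQUASI}) into the bound $|E(M)| \leq (k-1){2r \choose 2} + 1$ from the proof of Theorem \ref{thm:finitenumberexminfixedrankQUASI}. The only difference is that you spell out the routine arithmetic for the final inequality $(8r+2)\,n(r){2r \choose 2} < (3r^2)^{12r^2+2}$, which the paper leaves implicit.
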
 

\bibliographystyle{amsplain}
\bibliography{everything}

\end{document}